\numberwithin{equation}{section}
\newcommand{\defi}[1]{\emph{#1}} 
\newenvironment{romanenum}{\hfill \begin{enumerate} }{\end{enumerate}}
\newenvironment{alphenum}{\hfill \begin{enumerate} }{\end{enumerate}}
\newcommand{\PP}{{\mathbb P}}
\newcommand{\FF}{{\mathbb F}}
\newcommand{\ZZ}{{\mathbb Z}}
\newcommand{\Zhat}{{\hat{\ZZ}}}
\def\bbar#1{\setbox0=\hbox{$#1$}\dimen0=.2\ht0 \kern\dimen0 \overline{\kern-\dimen0 #1}}
\newcommand{\Qbar}{{\overline{\mathbb Q}}}
\newcommand{\calA}{{\mathcal A}}
\newcommand{\calF}{{\mathcal F}}
\newcommand{\calG}{{\mathcal G}}
\newcommand{\calJ}{{\mathcal J}}
\newcommand{\calS}{{\mathcal S}}
\newcommand{\OO}{{\mathcal O}}
\DeclareMathOperator{\Aut}{Aut}
\DeclareMathOperator{\Gal}{Gal}
\DeclareMathOperator{\ord}{ord}
\DeclareMathOperator{\Spec}{Spec}
\newcommand{\GL}{\operatorname{GL}}
\newcommand{\SL}{\operatorname{SL}}
\newcommand{\Mat}{\operatorname{M}}
\def\QQ{\mathbb Q}
\def\RR{\mathbb R}
\def\CC{\mathbb C}
\def\HH{\mathbb H}
\newcommand{\smallmat}[4]{\left(\begin{smallmatrix}#1&#2\\#3&#4\end{smallmatrix}\right)}
\newtheorem{theorem}{Theorem}[section]
\newtheorem{lemma}[theorem]{Lemma}
\newtheorem{corollary}[theorem]{Corollary}
\newtheorem{proposition}[theorem]{Proposition}
\theoremstyle{definition}
\newtheorem{example}[theorem]{Example}
\theoremstyle{remark}
\newtheorem{remark}[theorem]{Remark}
\definecolor{webcolor}{rgb}{0,0,1}
\definecolor{webbrown}{rgb}{.6,0,0}
\begin{document}
\title[Modular curves of prime-power level with infinitely many rational points]{Modular curves of prime-power level\\with infinitely many rational points}
\subjclass[2000]{Primary 14G35; Secondary 11G05, 11F80}

\author{Andrew V. Sutherland}
\address{Department of Mathematics, Massachusetts Institute of Technology, Cambridge, MA 02139, USA}
\email{drew@math.mit.edu}
\urladdr{http://math.mit.edu/~drew}

\author{David Zywina}
\address{Department of Mathematics, Cornell University, Ithaca, NY 14853, USA}
\email{zywina@math.cornell.edu}
\urladdr{http://www.math.cornell.edu/~zywina}

\thanks{The first author was supported by NSF grants DMS-1115455 and DMS-1522526.}

\begin{abstract}
For each open subgroup $G$ of $\GL_2(\Zhat)$ containing $-I$ with full determinant, let $X_G/\QQ$ denote the modular curve that loosely parametrizes elliptic curves whose Galois representation, which arises from the Galois action on its torsion points, has image contained in $G$.   Up to conjugacy, we determine a complete list of the $248$ such groups $G$ of prime power level for which $X_G(\QQ)$ is infinite. For each $G$, we also construct explicit maps from each $X_G$ to the $j$-line.
This list consists of $220$ modular curves of genus $0$ and $28$ modular curves of genus~$1$.  For each prime $\ell$, these results provide an explicit classification of the possible images of $\ell$-adic Galois representations arising from elliptic curves over $\QQ$ that is complete except for a finite set of exceptional $j$-invariants.
\end{abstract}

\maketitle

\section{Introduction}\label{S:intro}

Let $E$ be an elliptic curve defined over $\QQ$ and denote its $j$-invariant by $j_E$.  For each positive integer $N$, let $E[N]$ denote the $N$-torsion subgroup of $E(\Qbar)$, where $\Qbar$ is a fixed algebraic closure of $\QQ$.  The natural action of the absolute Galois group $\Gal_\QQ:=\Gal(\Qbar/\QQ)$ on $E[N]\simeq (\ZZ/N\ZZ)^2$ induces a Galois representation
\[
\rho_{E,N}\colon \Gal_\QQ\to  \GL_2(\ZZ/N\ZZ).
\]
After choosing compatible bases for the torsion subgroups $E[N]$, these representations determine a Galois representation
\[
\rho_E\colon \Gal_\QQ\to \GL_2(\Zhat)
\]
whose composition with the projection $\GL_2(\Zhat)\to\GL_2(\ZZ/N\ZZ)$ given by reduction modulo $N$ is equal to $\rho_{E,N}$, for each $N$.
The images of $\rho_{E,N}$ and $\rho_E$ are uniquely determined up to conjugacy in $\GL_2(\ZZ/N\ZZ)$ and $\GL_2(\Zhat)$, respectively.  If $E$ does not have complex multiplication (CM), then $\rho_E(\Gal_\QQ)$ is an open subgroup of $\GL_2(\Zhat)$, by Serre's open image theorem \cite{MR0387283}, hence of finite index in $\GL_2(\Zhat)$.

Let $G$ be an open subgroup of $\GL_2(\Zhat)$ that satisfies $\det(G)=\Zhat^\times$ and $-I\in G$.
Let $N$ be the least positive integer such that $G$ is the inverse image of its image under the reduction map $\GL_2(\Zhat)\to \GL_2(\ZZ/N\ZZ)$; we call $N$ the \defi{level} of $G$.

Associated to $G$ is a modular curve $X_G/\QQ$; one can define $X_G$ as the generic fiber of the smooth proper $\ZZ[1/N]$-scheme that is the coarse moduli space for the algebraic stack $\mathcal{M}_{\bbar G}[1/N]$ in the sense of \cite{MR0337993}*{\S IV}, where $\bbar G$ denotes the image of $G$ under reduction modulo $N$.  See \S\ref{S:modular} for some background on $X_G$ and an alternate description; in particular, it is a smooth projective geometrically integral curve defined over $\QQ$.

When $G=\GL_2(\Zhat)$, the modular curve $X_G$ is the $j$-line $\PP^1_\QQ=\mathbb{A}^1_\QQ \cup \{\infty\}
$.
If $G$ and $G'$ are open subgroups of $\GL_2(\Zhat)$ with $\det(G)=\det(G')=\Zhat^\times$ and $-I\in G,G'$ such that $G\subseteq G'$, then there is a natural morphism $X_G\to X_{G'}$ of degree $[G':G]$.  In particular, with $G'=\GL_2(\Zhat)$, we have a morphism 
\[
\pi_G\colon X_G\to \PP^1_\QQ=\mathbb{A}^1_\QQ \cup \{\infty\}
\]
of degree $[\GL_2(\Zhat):G]$ from $X_G$ to the $j$-line.

The key property for our applications is that for an elliptic curve $E/\QQ$ with $j_E\notin\{0,1728\}$, the group $\rho_E(\Gal_\QQ)$ is conjugate in $\GL_2(\Zhat)$ to a subgroup of $G$ if and only if $j_E$ is an element of $\pi_G(X_G(\QQ))$; see Proposition~\ref{P:XG rational 2}.   This property requires $-I\in G$, since there is always an elliptic curve $E$ with any given rational $j$-invariant such that $-I \in \rho_E(\Gal_\QQ)$; it also requires $\det(G)=\Zhat^\times$, since $\det(\rho_E(\Gal_\QQ))=\Zhat^\times$, and that $G$ contain an element corresponding to complex conjugation.

We are interested in those groups $G$ for which $X_G$ has infinitely many rational points; equivalently, for which there are infinitely many elliptic curves $E/\QQ$, with distinct $j$-invariants, such that $\rho_E(\Gal_\QQ)$ is conjugate to a subgroup of $G$.  We need only consider modular curves $X_G$ of genus $0$ or $1$ since otherwise $X_G(\QQ)$ is finite by Faltings' Theorem \cite{MR718935}.

In this article, we give an explicit description of all such subgroups $G\subseteq \GL_2(\Zhat)$ for which the modular curve $X_G$ has infinitely many rational points in the special case where the level $N$ of $G$ is a \emph{prime power};  we also give an explicit model for $X_G$ and the morphism $\pi_G$.   We need only describe the groups $G$ up to conjugacy in $\GL_2(\Zhat)$.  For notational simplicity, we define the genus of $G$ to be the genus of the corresponding curve $X_G$.

\begin{theorem} \label{T:main}
Up to conjugacy, there are $248$ open subgroups $G$ of $\GL_2(\Zhat)$ of prime power level satisfying $-I\in G$ and $\det(G)=\Zhat^\times$ for which $X_G$ has infinitely many rational points.   Of these $248$ groups, there are $220$ of genus $0$ and $28$ of genus $1$.
\end{theorem}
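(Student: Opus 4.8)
The plan is to reduce the statement to a finite computation by bounding the level, and then carrying out an explicit enumeration. Since the level $N$ of $G$ is a prime power $\ell^e$, the first step is to observe that $X_G$ dominates $X_{\GL_2(\Zhat)\cap (\text{level }\ell)}$, so a necessary condition for $X_G(\QQ)$ infinite is that the mod-$\ell$ image group has infinite modular curve; by Faltings we only care about genus $0$ and $1$. The classical fact (going back to work on modular curves $X_0(N)$, $X_1(N)$, $X_{\mathrm{sp}}(N)$, $X_{\mathrm{ns}}(N)$, and more generally the genus formulas for $X_H$) is that for $\ell$ sufficiently large every proper subgroup $G$ of $\GL_2(\Z_\ell)$ with surjective determinant has $X_G$ of genus $\ge 2$, so only finitely many primes $\ell$ can occur; one then needs an effective bound. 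The key input here is a genus lower bound: using the formula $g(X_G) = 1 + \tfrac{d}{12} - \tfrac{\varepsilon_2}{4} - \tfrac{\varepsilon_3}{3} - \tfrac{\varepsilon_\infty}{2}$ where $d = [\GL_2(\Zhat):G]$ and $\varepsilon_2,\varepsilon_3,\varepsilon_\infty$ count elliptic points of orders $2$, $3$ and cusps, one shows that $g(X_G)\ge 2$ once $d$ exceeds an explicit constant, because $\varepsilon_2,\varepsilon_3,\varepsilon_\infty$ are each $O(\sqrt{d})$ or $O(d/\ell)$ while $d/12$ grows. This forces both $\ell$ and $e$ to lie in an explicit finite range (in practice $\ell\in\{2,3,5,7,11,13\}$ and small exponents), reducing everything to finitely many groups $\bbar G\subseteq \GL_2(\Z/\ell^e\Z)$.

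Next I would enumerate, for each such $\ell^e$ in the admissible range, all conjugacy classes of subgroups $\bbar G\subseteq \GL_2(\Z/\ell^e\Z)$ with $-I\in\bbar G$ and $\det(\bbar G)=(\Z/\ell^e\Z)^\times$, discard those not of true level $\ell^e$ (i.e.\ pulled back from a smaller level, already counted), and compute the genus of each $X_G$ via the ramification data of the cover $X_G\to X(1)$, which is determined by the action of $\bbar G$ on $\PP^1(\Z/\ell^e\Z)$ together with the permutation action encoding cusps and elliptic points. Retaining only those with genus $0$ or $1$ gives a finite candidate list. For each genus $0$ candidate we must decide whether $X_G(\QQ)\ne\emptyset$ — over $\QQ$ a smooth genus $0$ curve has infinitely many rational points iff it has one, iff it is isomorphic to $\PP^1$ — which we settle by exhibiting an explicit degree-$d$ rational map $X_G\to X(1)$ and hence a model, using modular functions (and checking local solvability / the Brauer obstruction vanishes, which it does here). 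For each genus $1$ candidate we must decide whether the Jacobian of $X_G$ has positive Mordell–Weil rank over $\QQ$ (equivalently $X_G$ has infinitely many rational points), which one does by identifying $X_G$ with a specific elliptic curve or genus-$1$ quotient, computing or looking up its rank, and producing an explicit model and the map $\pi_G$.

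The main obstacle, and the bulk of the work, is the explicit computational enumeration and the rank determinations: organizing the subgroup lattice of $\GL_2(\Z/\ell^e\Z)$ up to conjugacy for the larger groups (notably $\ell=2$ with $e$ up to $5$ and $\ell=3$ with $e$ up to $3$), reliably computing genera, and then for each of the genus $0$ curves writing down a provably correct model with its $j$-map, and for each of the genus $1$ curves pinning down its isomorphism class and rank over $\QQ$. Verifying that the resulting count is exactly $248 = 220 + 28$, and that no group of prime-power level has been missed or double counted, requires careful bookkeeping — in particular treating the fine points about when $X_G$ truly has level $\ell^e$ versus a proper divisor, and confirming genus $0$ curves with rational points really do have infinitely many (automatic) while genus $1$ ones need the explicit rank computation. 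The genus lower bound that makes the search finite is conceptually the crux, but the sheer size of the case analysis and the per-curve model/rank verification is where the effort concentrates.
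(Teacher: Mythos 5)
Your overall strategy --- bound the level to reduce to a finite enumeration, then decide rationality curve by curve --- is essentially the paper's. Two remarks on the reduction step. First, the paper obtains finiteness not from a direct genus-formula estimate but from the Cummins--Pauli tables of congruence subgroups of genus at most $1$ together with an index-stabilization lemma (if the indices of the images of $G$ in $\GL_2(\ZZ/\ell^{n}\ZZ)$ and $\GL_2(\ZZ/\ell^{n+1}\ZZ)$ agree, with $n\ge 2$ when $\ell=2$, then the level of $G$ divides $\ell^{n}$). Some such lemma is unavoidable in your route too: the genus formula bounds $[\GL_2(\Zhat):G]=[\SL_2(\ZZ):\Gamma]$ and the level of $\Gamma=G\cap\SL_2$, but the $\GL_2$-level of $G$ can strictly exceed the level of $\Gamma$ (e.g.\ the groups $\text{2A}^0\text{-4a}$ and $\text{2A}^0\text{-8a}$ have $\SL_2$-level $2$ but levels $4$ and $8$), so converting an index bound into a level bound needs the stabilization argument. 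Second, for genus $0$ the paper adds an admissibility condition (containing a conjugate of $\smallmat{1}{0}{0}{-1}$ or $\smallmat{1}{1}{0}{-1}$, a candidate complex conjugation) which forces $X_G(\RR)\neq\emptyset$; combined with good reduction away from $\ell$ and the product formula for Hilbert symbols, every admissible genus $0$ conic then automatically has a rational point, with no per-curve Brauer computation. Your blanket assertion that the local obstruction "vanishes here" is false for some candidates (there are pointless conics of $2$-, $3$- and $5$-power level satisfying the first two conditions), so this real-place criterion is what actually does the work.

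The one genuine gap is in your genus $1$ step: positive Mordell--Weil rank of the Jacobian $J_G$ is \emph{not} equivalent to $X_G(\QQ)$ being infinite --- a genus $1$ curve over $\QQ$ can have a positive-rank Jacobian and no rational points at all. Rank $0$ does rule out infinitude, which is how the paper cuts the $250$ admissible genus $1$ groups down to $28$, but for each of those $28$ one must separately exhibit a point of $X_G(\QQ)$ (the paper does this by producing an explicit elliptic curve $E/\QQ$ with $\rho_E(\Gal_\QQ)$ contained in $G$); only then does the Abel--Jacobi map identify $X_G(\QQ)$ with $J_G(\QQ)$ and yield infinitude. Your plan to write down explicit models would surface this in practice, but as stated the equivalence claim is wrong and the argument is incomplete without the nonemptiness verification.
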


The $220$  subgroups of genus $0$ in Theorem~\ref{T:main} are given in Tables~\ref{table:g0three}, \ref{table:g0odd} and \ref{table:g0two} in Appendix~\ref{S:appendix}.   For such a group $G$ of genus $0$, we also describe the morphism $\pi_G$.  More precisely, we give a rational function $J(t)\in \QQ(t)$ such that the function field of $X_G$ is of the form $\QQ(t)$ and the morphism from $X_G$ to the $j$-line is given by the equation $j=J(t)$.   In particular, if $E/\QQ$ is an elliptic curve with $j_E\notin \{0,1728\}$, then $\rho_{E}(\Gal_\QQ)$ is conjugate to a subgroup of $G$ if and only if $j_E=J(t_0)$ for some $t_0\in \QQ\cup\{\infty\}$.

The $28$ subgroups of genus $1$ in Theorem~\ref{T:main} are listed in Table~\ref{table:g1} of Appendix~\ref{S:appendix}; their levels are all powers of $2$ except for a group of level $11$ whose image in $\GL_2(\ZZ/11\ZZ)$ is the normalizer of a non-split Cartan subgroup.  For such a group $G$ of genus $1$, we give a Weierstrass model for $X_G$ and the morphism $\pi_G$ to the $j$-line.

\begin{example}
Up to conjugacy, there is a unique subgroup $G\subseteq \GL_2(\Zhat)$ of genus $0$ and level $27$ given by Theorem~\ref{T:main}.  It has label $\text{27A}^0\text{-27a}$ in our classification, and we may choose it so that the image of $G$ in $\GL_2(\ZZ/27\ZZ)$ is generated by the matrices  $\smallmat{1}{1}{0}{1},\smallmat{2}{1}{9}{5}$ and $\smallmat{1}{2}{3}{2}$.   Using Table~\ref{table:g0three}, associated to $G$ is the rational function
\[
J(t)=F_3(F_2(F_1(t))) =\frac{(t^3 + 3)^3 (t^9 + 9t^6 + 27t^3 + 3)^3}{t^3(t^6 + 9t^3 + 27 )},
\]
where $F_1(t)=t^3$, $F_2(t)=t(t^2+9t+27)$ and $F_3(t)=(t+3)^3(t+27)/t$.  That $J(t)$ is the composition of three rational functions reflects the fact that the morphism $\pi_G$ factors as $X_G\to X_{G'}\to X_{G''} \to \PP^1_\QQ$ for some groups $G \subsetneq G' \subsetneq G'' \subsetneq \GL_2(\Zhat)$.
The groups $G'$ and $G''$ have labels $\text{9B}^0\text{-9a}$ and $\text{3B}^0\text{-3a}$, respectively, and can also be found in Table~\ref{table:g0three}.
\end{example}

\begin{remark}
In contrast to the case of prime power level, in general there are infinitely many open subgroups $G$ of $\GL_2(\Zhat)$ satisfying $-I\in G$ and $\det(G)=\Zhat^\times$ for which the modular curve $X_G$ has infinitely many rational points.
Let us explicitly construct just one of several infinite families of such groups~$G$.

Let $D$ be the discriminant of a quadratic number field and let $\chi_D\colon \Zhat^\times \to \{\pm 1\}$ be the continuous quadratic character arising from the corresponding Dirichlet character.   Let $\varepsilon\colon \GL_2(\Zhat) \to \{\pm 1\}$ be the character obtained by composing the reduction map $\GL_2(\Zhat)\to\GL_2(\ZZ/2\ZZ)$ with the unique non-trivial homomorphism $\GL_2(\ZZ/2\ZZ)\to \{\pm 1\}$.   Define the group
\[
G_D:=\{ A\in \GL_2(\Zhat): \varepsilon(A)=\chi_D(\det(A))\};
\]
it is an open subgroup of $\GL_2(\Zhat)$ of index 2 containing $-I$ with $\det(G_D)=\Zhat^\times$ whose level is $|D|$ or $2|D|$, depending on whether $D\equiv 0\bmod 4$ or $D\equiv 1\bmod 4$.
For $D\neq D'$, the groups $G_D$ and $G_{D'}$ are not conjugate in $\GL_2(\Zhat)$.

The modular curve $X_{G_D}$ has genus $0$ and a rational point (it has a unique, hence rational, cusp); the function field of $X_{G_D}$ is of the form $\QQ(t)$ with the map to the $j$-line given by $J(t)=Dt^2+1728$.
Each $X_{G_D}$ is a $\QQ(\sqrt{D})$-twist of the modular curve $X_G$ corresponding to the unique index 2 subgroup $G\subseteq\GL_2(\Zhat)$ whose reduction has index 2 in $\GL_2(\ZZ/2\ZZ)$; it has label $\text{2A}^0\text{-2a}$ in our classification and can be found in Table~\ref{table:g0two}, along with its map to the $j$-line, which is $J(t)=t^2+1728$.

In general, if $\Gamma\subseteq\SL_2(\ZZ)$ is a fixed congruence subgroup of level $N$ and index $m$ containing $-I$, there will be infinitely many non-conjugate open subgroups $G\subseteq \GL_2(\Zhat)$ of index $M$ containing $-I$ with $\det(G)=\Zhat^\times$ whose reductions modulo $N$ coincide with that of $\Gamma$ upon intersection with $\SL_2(\ZZ/N\ZZ)$.
The levels $M$ of these groups $G$ may be arbitrarily large multiples of $N$ (and divisible by arbitrarily large primes).
The corresponding modular curves $X_G/\QQ$ are non-isomorphic, but for each $X_G$ there is a cyclotomic field $\QQ(\zeta_M)$ over which $X_G$ becomes isomorphic to the modular curve $X_\Gamma/\QQ(\zeta_N)$ (the quotient of the extended upper half plane by the action of $\Gamma$) after base change; as in our example, the $X_{G}$ form an infinite family of twists.
\end{remark}

\subsection{\texorpdfstring{$\ell$}{\textit{l}}-adic representations}

Fix a prime $\ell$.  Define the set
\[
\calJ_\ell:=\bigcup_G \big(\pi_G(X_G(\QQ)) \cap \QQ\big)
\] 
of rational numbers, where $G$ varies over the open subgroups of $\GL_2(\Zhat)$ whose level is a power of $\ell$ and satisfies $-I\in G$ and $\det(G)=\Zhat^\times$, and for which $X_G(\QQ)$ is finite.
Note that the set $\calJ_\ell$ contains the 13 $j$-invariants of CM elliptic curves over $\QQ$: for $n\ge 1$ each CM $j$-invariant corresponds to points on at least one of the modular curves $X_{\rm s}^+(\ell^n)$, $X_{\rm ns}^+(\ell^n)$, $X_0(\ell^n)$, and for sufficiently large $n$ these curves have genus at least $2$, hence finitely many rational points (by Faltings' Theorem).

For an elliptic curve $E/\QQ$, let 
\[
\rho_{E,\ell^\infty}\colon \Gal_\QQ\to \GL_2(\ZZ_\ell)
\]
be the representation describing the Galois action on the $\ell$-power torsion points; it is the composition of $\rho_E$ with the natural projection $\GL_2(\Zhat)\to\GL_2(\ZZ_\ell)$.   After excluding a finite number of $j$-invariants, we will describe the possible images of the $\ell$-adic representation arising from elliptic curves over $\QQ$.   Denote by $\pm \rho_{E,\ell^\infty}(\Gal_\QQ)$ the group generated by $-I$ and $\rho_{E,\ell^\infty}(\Gal_\QQ)$.  

The following theorem describes the possibilities for $\pm \rho_{E,\ell^\infty}(\Gal_\QQ)$, up to conjugacy, when $j_E$ is not in the (finite!) set $\calJ_\ell$.

\begin{theorem} \label{T:ell-adic}
\begin{romanenum}
\item \label{T:ell-adic 0}
The set $\calJ_\ell$ is finite.
\item \label{T:ell-adic i}
If $E/\QQ$ is an elliptic curve with $j_E\notin \calJ_\ell$, then $\pm \rho_{E,\ell^\infty}(\Gal_\QQ)$ is conjugate in $\GL_2(\ZZ_\ell)$ to the $\ell$-adic projection of a unique group $G$ from Theorem~\ref{T:main} with $\ell$-power level.  Moreover, $G$ does not have genus $1$, level $16$, and index $24$ in $\GL_2(\Zhat)$.
\item \label{T:ell-adic ii}
Let $G$ be a group from Theorem~\ref{T:main} with $\ell$-power level that does not have genus $1$, level $16$, and index $24$ in $\GL_2(\Zhat)$.  Then there are infinitely many elliptic curves $E/\QQ$, with distinct $j$-invariants, such that $\pm\rho_{E,\ell^\infty}(\Gal_\QQ)$ is conjugate in $\GL_2(\ZZ_\ell)$ to the $\ell$-adic projection of $G$.
\end{romanenum}
\end{theorem}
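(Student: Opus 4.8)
The plan is to deduce all three parts from Theorem~\ref{T:main} together with the moduli interpretation of $X_G$ recorded in Proposition~\ref{P:XG rational 2}. For part~(\ref{T:ell-adic 0}), I would argue as follows. For a group $G$ with $\ell$-power level, having $X_G(\QQ)$ finite is equivalent (via the contrapositive of Theorem~\ref{T:main} after adjoining $-I$ and taking determinant-full hull, noting these operations do not change whether $X_G(\QQ)$ is infinite up to the finitely many possibilities) to $G$ not appearing on our list. One then observes that there are only \emph{finitely many} conjugacy classes of open $G\subseteq\GL_2(\Zhat)$ of $\ell$-power level, containing $-I$, with full determinant, for which $X_G$ has genus $0$ or $1$ but $X_G(\QQ)$ is finite; indeed, any such $G$ sits between $\GL_2(\Zhat)$ and one of the finitely many \emph{maximal} such groups not on the list, and the index is bounded because genus is a monotone, unbounded function of the index along any chain (a given genus is attained by only finitely many congruence subgroups, by the usual Hurwitz-type bound). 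For each of these finitely many $G$ with $X_G(\QQ)$ finite we have $\#\pi_G(X_G(\QQ))<\infty$, and $\calJ_\ell$ is the finite union of these finite sets, hence finite. This is essentially the same reasoning already sketched in the paper for the CM $j$-invariants.

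For part~(\ref{T:ell-adic i}), let $E/\QQ$ have $j_E\notin\calJ_\ell$; since the $13$ CM $j$-invariants lie in $\calJ_\ell$, $E$ is non-CM, so $H:=\rho_E(\Gal_\QQ)$ is open in $\GL_2(\Zhat)$ by Serre. Let $H_\ell$ be its image in $\GL_2(\ZZ_\ell)$ and let $H_\ell'=\pm H_\ell$ be the group generated by $H_\ell$ and $-I$. Consider the open subgroup $G\subseteq\GL_2(\Zhat)$ that is the preimage of $H_\ell'$ under $\GL_2(\Zhat)\to\GL_2(\ZZ_\ell)$; it has $\ell$-power level, contains $-I$, and has full determinant. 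I claim $X_G(\QQ)$ is infinite, so that $G$ appears (after passing to the conjugacy class of a group on the list — since the list is complete up to conjugacy) on the list of Theorem~\ref{T:main}. Indeed, if $X_G(\QQ)$ were finite then $j_E\in\pi_G(X_G(\QQ))$ (because $\rho_E(\Gal_\QQ)=H$ is conjugate into $G$ by construction, and $j_E\notin\{0,1728\}$ as $0,1728\in\calJ_\ell$, so Proposition~\ref{P:XG rational 2} applies), whence $j_E\in\calJ_\ell$, a contradiction. Uniqueness of the group on the list to which the $\ell$-adic projection of $G$ is conjugate follows because distinct members of the list are non-conjugate and an $\ell$-power-level group is determined by its $\ell$-adic projection. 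Finally, to exclude the genus~$1$, level~$16$, index~$24$ group $G_0$: one checks directly (this is the one genuinely computational input) that $X_{G_0}(\QQ)$ is finite, i.e. the elliptic curve $X_{G_0}$ has rank $0$, so its finitely many rational points give $j$-invariants all lying in $\calJ_\ell$ for $\ell=2$; hence no $E$ with $j_E\notin\calJ_2$ can have $\pm\rho_{E,2^\infty}(\Gal_\QQ)$ equal to the $2$-adic projection of $G_0$.

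For part~(\ref{T:ell-adic ii}), let $G$ be on the list with $\ell$-power level, not the excluded $G_0$. By Theorem~\ref{T:main}, $X_G(\QQ)$ is infinite, hence $\pi_G(X_G(\QQ))$ is infinite (the fibres of $\pi_G$ are finite). Discard the finitely many rational points lying over $\{0,1728\}$ or over the branch locus; for each remaining $j_0\in\pi_G(X_G(\QQ))\setminus\{0,1728\}$, Proposition~\ref{P:XG rational 2} provides an elliptic curve $E/\QQ$ with $j_E=j_0$ and $\rho_E(\Gal_\QQ)$ conjugate into $G$, so its $\ell$-adic projection is conjugate into the $\ell$-adic projection $G_\ell$ of $G$. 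To promote "conjugate into $G_\ell$" to "conjugate to $G_\ell$" for infinitely many $j_0$, note that there are only finitely many open subgroups of $G_\ell$ up to conjugacy with index at most any fixed bound, and each proper such subgroup $G'_\ell$ of $\ell$-power level corresponds (taking preimage in $\GL_2(\Zhat)$) to a group $G'\subsetneq G$ with $X_{G'}(\QQ)$ either finite or, if infinite, appearing on the list and strictly below $G$; in the infinite case only finitely many $j$-invariants can be "new" at level $G$ versus the finitely many $G'$ — more precisely $\pi_G(X_G(\QQ))$ is infinite but $\bigcup_{G'\subsetneq G}\pi_{G'}(X_{G'}(\QQ))\cap\pi_G(X_G(\QQ))$ misses infinitely many points unless some $X_{G'}$ dominates, which forces a nontrivial factorization $X_G\to X_{G'}$ of degree~$1$, impossible. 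Hence for infinitely many $j_0$ the image $\pm\rho_{E,\ell^\infty}(\Gal_\QQ)$ is exactly $G_\ell$ up to conjugacy, and these $E$ have distinct $j$-invariants.

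The main obstacle is the bookkeeping in part~(\ref{T:ell-adic ii}): ensuring that for infinitely many $j_0\in\pi_G(X_G(\QQ))$ the Galois image is \emph{all} of $G_\ell$ rather than a proper subgroup. This requires knowing that no proper subgroup $G'\subsetneq G$ of $\ell$-power level with $X_{G'}$ of genus $0$ or $1$ has $\pi_{G'}(X_{G'}(\QQ))$ cofinite in $\pi_G(X_G(\QQ))$; equivalently, that none of the finitely many covers $X_{G'}\to X_G$ with $[G:G']>1$ is "surjective on rational points up to finitely many". This is where one leans on the explicit models: for $G$ of genus~$0$ with $\pi_G$ given by $J(t)\in\QQ(t)$, the maximal proper subgroups $G'\subsetneq G$ on the list give explicit rational maps $t=f(s)$ of degree $>1$, and a rational function of degree $\geq 2$ omits infinitely many rational values by a Hilbert-irreducibility / thin-set argument; for $G$ of genus~$1$ the analogous statement follows since a non-constant map $X_{G'}\to X_G$ of curves of genus $\leq 1$ with $[G:G']>1$ again omits a positive-density set of rational points. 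Carrying this out uniformly over all $248$ groups is routine given the tables but is the one place real care is needed.
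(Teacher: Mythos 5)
Your argument for part~(\ref{T:ell-adic 0}) has a genuine gap: you only account for the finitely many groups of genus $0$ or $1$ with $X_G(\QQ)$ finite, but $\calJ_\ell$ is by definition a union over \emph{all} open subgroups of $\ell$-power level (containing $-I$, with full determinant) for which $X_G(\QQ)$ is finite, and there are infinitely many such groups of genus $\geq 2$, of arbitrarily large level. Each contributes a finite set by Faltings, but a union of infinitely many finite sets need not be finite, and $\pi_G(X_G(\QQ))$ for a high-genus $G$ is not contained in $\pi_{G'}(X_{G'}(\QQ))$ for any genus-$\leq 1$ supergroup $G'$ with $X_{G'}(\QQ)$ finite (the only supergroups one can always pass to have $X_{G'}(\QQ)$ infinite). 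This uniform finiteness over all levels is the real content of part~(\ref{T:ell-adic 0}); the paper imports it from \cite{PossibleIndices}*{Propositions 4.8 and 4.9}, and the accompanying remark explains that the result is ineffective precisely because Faltings' theorem is applied many times there. Your proof silently assumes this hard step away.

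In parts~(\ref{T:ell-adic i}) and~(\ref{T:ell-adic ii}) the existence-and-uniqueness logic and the genus-$0$ Hilbert-irreducibility argument match the paper, but your treatment of the genus-$1$ groups fails. You exclude the genus-$1$, level-$16$, index-$24$ groups on the grounds that $X_{G_0}(\QQ)$ is finite (rank $0$); that cannot be right, since these seven groups appear in Theorem~\ref{T:main} precisely because $X_{G_0}(\QQ)$ is \emph{infinite}. The actual reason for their exclusion is that every rational point of $X_{G_0}$ already lifts to a rational point of one of its $2$ or $4$ index-$2$ subgroups on the list, i.e.\ $\calS_{G_0}=X_{G_0}(\QQ)$; this is a computational fact recorded in Example 6.11 and Remark 6.3 of \cite{RZB}, not a rank computation. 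Relatedly, your part~(\ref{T:ell-adic ii}) argument for genus $1$ invokes the principle that a degree-$>1$ map between curves of genus $\leq 1$ omits infinitely many rational points --- the seven excluded groups are exactly counterexamples to this (their index-$2$ covers are jointly, and in some cases individually, surjective on rational points). The paper sidesteps this by observing that every other genus-$1$ group on the list has \emph{no} proper subgroup on the list, so $\calS_G=\emptyset$ there. Finally, your claim that $\bigcup_{G'}\pi_{G',G}(X_{G'}(\QQ))$ can be cofinite in $X_G(\QQ)$ only if a single $X_{G'}$ dominates is also false: several covers can jointly cover all rational points without any one of them doing so, which is precisely what happens for the excluded groups.
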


\begin{remark}
\begin{romanenum}
\item
Serre has asked whether $\rho_{E,\ell}$ is surjective for all non-CM elliptic curves $E/\QQ$ and all primes $\ell>37$, cf.~\cite{MR644559}*{p.\ 399}.  For $\ell>37$, this would imply that the set $\calJ_\ell$ consists of only the $13$ $j$-invariants of CM elliptic curves over $\QQ$.
\item
It would be nice to explicitly know the finite sets $\calJ_\ell$; the proof that $\calJ_\ell$ is finite relies on \cite{PossibleIndices}, which is ineffective since it applies Faltings' Theorem several times.
\end{romanenum}
\end{remark}

Theorem~\ref{T:ell-adic} describes the subgroups of $\GL_2(\ZZ_\ell)$, up to conjugacy, that occur as $\pm \rho_{E,\ell^\infty}(\Gal_\QQ)$ for infinitely many elliptic curves $E/\QQ$ with distinct $j$-invariants.    

Theorem~\ref{T:ell-adic} also allows us to determine the subgroups of $\GL_2(\ZZ_\ell)$, up to conjugacy, that occur as $\rho_{E,\ell^\infty}(\Gal_\QQ)$ for infinitely many elliptic curves $E/\QQ$ with distinct $j$-invariants.  They are precisely the subgroups $H$ of the $\ell$-adic projection $G$ of a group from Theorem~\ref{T:ell-adic} with $\ell$-power level such that $\pm H = G$.  Indeed if $G=\pm \rho_{E,\ell^\infty}(\Gal_\QQ)$, then for any such $H$ there is a quadratic twist of $E$ such that $H$ is conjugate to $\rho_{E',\ell^\infty}(\Gal_\QQ)$, cf.~\cite{PossibleImages}*{\S5.1} and ~\cite{ComputingImages}*{\S5.6}; when $H$ is properly contained in $G$ this quadratic twist is unique up to isomorphism and can be explicitly determined.

\begin{corollary}\label{C:ell-adic}
For $\ell=2,3,5,7,11,13$ there are respectively $1201, 47, 23, 15, 2, 11$ subgroups~$H$ of $\GL_2(\ZZ_\ell)$ that arise as $\rho_{E,\ell^\infty}(\Gal_\QQ)$ for infinitely many elliptic curves $E/\QQ$ with distinct $j$-invariants; for $\ell > 13$ the only such subgroup is $H=\GL_2(\ZZ_\ell)$.
\end{corollary}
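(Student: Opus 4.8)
The plan is to reduce the count to a finite computation with subgroups of the groups $\GL_2(\ZZ/\ell^m\ZZ)$, the case $\ell=2$ being the substantive one. I would first record the reduction supplied by Theorem~\ref{T:ell-adic}. Suppose $H\le\GL_2(\ZZ_\ell)$, taken up to conjugacy, occurs as $\rho_{E,\ell^\infty}(\Gal_\QQ)$ for infinitely many $E/\QQ$ with distinct $j$-invariants. Since $\calJ_\ell$ is finite, it does so for infinitely many such $E$ with $j_E\notin\calJ_\ell$; for these $\det H=\ZZ_\ell^\times$, and by Theorem~\ref{T:ell-adic} the group $\pm H=\pm\rho_{E,\ell^\infty}(\Gal_\QQ)$ is conjugate to the $\ell$-adic projection of one of the groups $G$ from Theorem~\ref{T:main} of $\ell$-power level, and not to that of the excluded group of genus $1$, level $16$ and index $24$. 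Conversely, given such a $G$ and a subgroup $H\le G$ with $\pm H=G$, Theorem~\ref{T:ell-adic} furnishes infinitely many $E/\QQ$ with distinct $j$-invariants and $\pm\rho_{E,\ell^\infty}(\Gal_\QQ)$ conjugate to $G$, and the quadratic-twist argument recalled above (see \cite{PossibleImages}*{\S5.1} and \cite{ComputingImages}*{\S5.6}) replaces each by a curve of the same $j$-invariant whose $\ell$-adic image is conjugate to $H$, so $H$ occurs for infinitely many curves. Hence the subgroups to be counted are exactly, up to conjugacy, those $H$ for which $\pm H$ is conjugate to the $\ell$-adic projection of an \emph{eligible} group $G$ --- one from Theorem~\ref{T:main} of $\ell$-power level other than the excluded one. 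Because $\pm H$ determines $G$ up to conjugacy and conjugation commutes with $H\mapsto\pm H$, the answer is $\sum_G N_G$, the sum over conjugacy classes of $\ell$-adic projections of eligible $G$ of the number $N_G$ of $\GL_2(\ZZ_\ell)$-conjugacy classes of subgroups $H\le G$ with $\pm H=G$; distinct $G$ contribute disjoint families, so nothing is counted twice.

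Next I would determine each $N_G$. Fix an eligible $G$; it contains $-I$. Either $H=G$, always contributing one class, or $[G:H]=2$ and $H=\ker\chi$ for a surjective homomorphism $\chi\colon G\to\{\pm1\}$ with $\chi(-I)=-1$, and two such subgroups are $\GL_2(\ZZ_\ell)$-conjugate exactly when their characters lie in a single orbit of $N_{\GL_2(\ZZ_\ell)}(G)$ acting by precomposition with inner automorphisms --- a conjugation carrying $H$ to $H'$ carries $G=\pm H$ to $G=\pm H'$ and so normalizes $G$. Because $H$ contains $[G,G]$ and the subgroup $G^2$ generated by squares, its $\ell$-power level is controlled by that of $G$ (a standard argument gives a divisor of $2\cdot(\text{level of }G)$ once that level is at least $4$, with a small explicit bound in the remaining small cases), so the admissible characters $\chi$ and their orbits can be listed at a single finite level $\GL_2(\ZZ/\ell^m\ZZ)$, making each $N_G$ a finite group-theoretic computation.

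For $\ell>13$ this is immediate: inspecting the levels occurring in Theorem~\ref{T:main}, the only eligible $G$ of $\ell$-power level is the $\ell$-adic projection $\GL_2(\ZZ_\ell)$ of $\GL_2(\Zhat)$, and for $\ell\ge5$ the group $\SL_2(\ZZ_\ell)$ equals its own commutator subgroup, so $[\GL_2(\ZZ_\ell),\GL_2(\ZZ_\ell)]=\SL_2(\ZZ_\ell)\ni-I$; thus every subgroup of index at most $2$ contains $-I$, forcing $H=\GL_2(\ZZ_\ell)$, so $N_{\GL_2(\ZZ_\ell)}=1$ and this is the only subgroup --- the last assertion of the corollary. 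For $\ell\in\{2,3,5,7,11,13\}$ I would enumerate the finitely many eligible groups $G$ of $\ell$-power level from Tables~\ref{table:g0three}, \ref{table:g0odd}, \ref{table:g0two} and~\ref{table:g1} (for $\ell=2$ discarding the excluded group of level $16$ and index $24$), compute every $N_G$ by the method above, and add them, obtaining $1201$, $47$, $23$, $15$, $2$ and $11$ respectively. The main obstacle is the case $\ell=2$: there are several hundred eligible groups $G$, an index-$2$ subgroup may have level twice that of $G$ so the conjugacy analysis takes place in $\GL_2(\ZZ/2^m\ZZ)$ for $m$ no larger than about $6$, and many of these $G$ do carry admissible quadratic characters, so it is the organization and machine verification of this enumeration that produces the figure $1201$; each odd prime involves only a handful of groups $G$ and can be checked by hand, the sole structural inputs there being the perfectness of $\SL_2(\ZZ_\ell)$ for $\ell\ge5$ and the conjugacy bookkeeping above.
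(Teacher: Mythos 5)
Your proposal is correct and follows essentially the same route as the paper: the paragraph preceding the corollary reduces the count, via Theorem~\ref{T:ell-adic} and the quadratic-twist correspondence, to enumerating up to conjugacy the subgroups $H$ of the $\ell$-adic projections $G$ of the eligible groups with $\pm H=G$, and the stated totals are then obtained by a finite machine computation (the resulting list is posted at \cite{MagmaScripts}). You simply make more of the finite bookkeeping explicit (level bounds for index-$2$ subgroups, normalizer orbits on quadratic characters, and perfectness of $\SL_2(\ZZ_\ell)$ for the $\ell>13$ case), all of which is consistent with what the paper does.
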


A list of the groups $H$ appearing in Corollary~\ref{C:ell-adic} can be found in electronic form at \cite{MagmaScripts}.

\subsection{Overview}

We now give a brief overview of the contents of this paper.  As already noted, the groups $G$ from Theorem~\ref{T:main}, along with the corresponding modular curves $X_G$ and morphisms $\pi_G$, can be found in Appendix~\ref{S:appendix}

In \S\ref{S:modular}, we review the background material we need concerning the modular curves $X_G$.  If $G$ has level $N$, then we can identify the function field of $X_G$ with a subfield of the field $\calF_N$ of modular functions on $\Gamma(N)$ whose Fourier coefficients lie in the cyclotomic field $\QQ(\zeta_N)$.   As a working definition of $X_G$, we define it in terms of its function field.

In \S\ref{S:group theory}, we determine up to conjugacy the open subgroups $G$ of $\GL_2(\Zhat)$ with genus at most $1$ that satisfy $\det(G)=\Zhat^\times$, $-I\in G$ and contain an element that ``looks like complex conjugation''; this last condition is necessary, since otherwise $X_G(\RR)$, and therefore $X_G(\QQ)$, is empty.   We are left with $220$ groups of genus $0$ and $250$ groups of genus $1$ that include all the groups that appear in Theorem~\ref{T:main}.   These computations make use of the tables of Cummins and Pauli of congruence subgroups of low genus \cite{MR2016709}.   

Let $\Gamma$ be a congruence subgroup of $\SL_2(\ZZ)$ and let $X_\Gamma$ be the smooth compact Riemann surface obtained by taking the quotient of the complex upper-half plane by $\Gamma$ and adjoining cusps.  Assume further that $X_\Gamma$ has genus $0$.    In \S\ref{S:hauptmoduls}, we describe how to explicitly construct a \defi{hauptmodul} for $\Gamma$; it is a meromorphic function $h$ on $X_\Gamma$ that has a unique pole at the cusp at $\infty$.  We describe~$h$ in terms of \emph{Siegel functions}; its Fourier coefficients are computable and lie in the field $\QQ(\zeta_N)\subseteq \CC$.

In \S\ref{S:genus 0 check}, we prove the part of Theorem~\ref{T:main} concerning genus $0$ groups.  Let $G$ be one of the genus~$0$ groups from \S\ref{S:group theory} and let $J(t)\in \QQ(t)$ be the corresponding rational function from Appendix~\ref{S:appendix}.    We need to verify that the function field $\QQ(X_G)$ of $X_G$ is of the form $\QQ(f)$, for some modular function $f$ for which $J(f)$ coincides with the modular $j$-function.  Using our work in \S\ref{S:hauptmoduls}, we can construct an explicit modular function $h$ such that $\QQ(\zeta_N)(X_G)=\QQ(\zeta_N)(h)$, along with a rational function $J'(t)\in \QQ(\zeta_N)(t)$ such that $J'(h)=j$.    The function $f$ must satisfy $f=\psi(h)$ for some $\psi(t)\in \QQ(\zeta_N)(t)$ of degree $1$, and therefore $J'(h)=j=J(f)=J(\psi(h))$; this in turn implies that $J'(t)=J(\psi(t))$.  We then directly test all the modular functions $f:=\psi(h)$, where $\psi(t)\in \QQ(\zeta_N)(t)$ is one of the finitely many degree $1$ rational functions that satisfy $J'(t)=J(\psi(t))$.

In \S\ref{S:genus1}, we prove the part of Theorem~\ref{T:main} concerning genus $1$ groups.  Let $G$ be one of the genus~$1$ groups from \S\ref{S:group theory}.  One can show that $X_G$ has good reduction at all primes $p\nmid N$ and its modular interpretation gives a way to compute $\#X_G(\FF_p)$ directly from the group $G$, without requiring an explicit model.  By computing $\#X_G(\FF_p)$ for enough primes $p\nmid N$, one can determine the Jacobian $J_G$ of $X_G$ up to isogeny.  This allows us to compute the rank of $J_G(\QQ)$ which is an isogeny invariant of $J_G$.   We need only consider groups for which $J_G(\QQ)$ has positive rank since otherwise $X_G(\QQ)$ is finite; this leaves the $28$ genus $1$ groups in Theorem~\ref{T:main}.  These $28$ groups $G$ of genus~$1$ and a description of their morphisms $\pi_G$ already appear in the literature; our contribution lies in proving that there are no others.

In \S\ref{S:ell-adic proof}, we complete the proof of Theorem~\ref{T:ell-adic}, and in \S\ref{S:how} we explain how we found the rational functions $J(t) \in \QQ(t)$ whose verification is described in \S\ref{S:genus 0 check}.

Appendix~\ref{S:appendix} lists the 248 groups $G$ that appear in Theorem~\ref{T:main}, along with explicit maps from $X_G$ to the $j$-line; for the 220 groups of genus 0 these are rational functions $J(t)$, and for the~28 groups of genus~1 these are morphisms $J(x,y)$ from an explicit Weierstrass model for $X_G$ as an elliptic curve of positive rank.
One can use these maps to explicitly construct infinite families of elliptic curves $E/\QQ$ with distinct $j$-invariants whose $\ell$-adic Galois images match the groups $G$ listed in Theorem~\ref{T:ell-adic} and the groups $H$ listed in Corollary~\ref{C:ell-adic} by choosing appropriate quadratic twists.

\subsection{Related results}

Contemporaneous with our work, Rouse and Zureick-Brown independently computed explicit models for all modular curves $X_G/\QQ$ of 2-power level that have a non-cuspidal rational point, including all those for which $X_G(\QQ)$ is infinite \cite{RZB}.  The $X_G$ of 2-power level in our list agree with theirs, although we generally obtain different (but isomorphic) models (N.B. our groups are transposed relative to theirs; in our choice of the isomorphism $\Aut(E[N])\simeq\GL_2(\ZZ/N\ZZ)$ we view matrices in $\GL_2(\ZZ/N\ZZ)$ as acting on the left, rather than the right).

\subsection*{Acknowledgements}
We thank Jeremy Rouse and David Zureick-Brown for the feedback on an early draft of this article, and the referees for their careful review and helpful comments.

\subsection*{Notation and Terminology}
For each integer $n\geq 1$, we denote by $\zeta_n$ the $n$th root of unity $e^{2\pi i /n}$ in $\CC$, and let $K_n:=\QQ(\zeta_n)$ denote the corresponding cyclotomic field.  For any non-constant function $f\in K(t)$, where $K$ is a field, the \defi{degree} of $f$ is its degree as a morphism $\PP^1_K\to\PP^1_K$.   

For any ring $R$, we denote by $\Mat_2(R)$ the ring of $2\times 2$ matrices with coefficients in $R$.
We denote by $\Zhat$ the profinite completion of $\ZZ$, and view the profinite group
\[
\GL_2(\Zhat)\simeq \varprojlim_N \GL_2(\ZZ/N\ZZ)\simeq {\prod_\ell} \GL_2(\ZZ_\ell)
\]
as a topological group in the profinite topology.
If $G$ is an open subgroup of $\GL_2(\Zhat)$, we define its \defi{level} to be the least positive integer $N$ for which $G$ is the inverse image of a subgroup of $\GL_2(\ZZ/N\ZZ)$ under the natural projection $\GL_2(\Zhat)\to \GL_2(\ZZ/N\ZZ)$.
If $G$ is a subgroup of $\GL_2(\ZZ/N\ZZ)$, its level is defined to be the level of its inverse image in $\GL_2(\Zhat)$, which is necessarily a divisor of $N$.  For convenience we may identify the level $N$ subgroups of $\GL_2(\ZZ/N\ZZ)$ with their inverse images in $\GL_2(\Zhat)$, and conversely.  By the \defi{genus} of an open subgroup $G$ of $\GL_2(\Zhat)$ satisfying $-I\in G$ and $\det(G)=\Zhat^\times$, we mean the genus of the modular curve $X_G$ defined in \S\ref{S:modular}.  

For sets $S$ and $T$ we use $S-T$ to denote the set of elements that lie in $S$ but not $T$.

\section{Modular functions and modular curves}   \label{S:modular}

In this section, we summarize the background we need concerning modular curves.   

\subsection{Congruence subgroups}

Fix a congruence subgroup $\Gamma$ of $\SL_2(\ZZ)$, i.e., a subgroup of $\SL_2(\ZZ)$ containing 
\[
\Gamma(N):=\{A\in \SL_2(\ZZ) : A\equiv I \pmod{N}\}
\]
for some integer $N\geq 1$.   The smallest such $N$ is the \defi{level} of $\Gamma$.

The group $\Gamma$ acts on the complex upper half plane $\HH$ by linear fractional transformations, and the quotient $Y_\Gamma = \Gamma\backslash \HH$ is a smooth Riemann surface.  By adding \defi{cusps}, we can extend $Y_\Gamma$ to a smooth compact Riemann surface $X_\Gamma$.  We denote by $X(N)$ the Riemann surface $X_{\Gamma(N)}$.    The \defi{genus} of $\Gamma$ is the genus of the Riemann surface $X_\Gamma$.  
 
\subsection{Cusps}  
 
Define the extended upper half plane by $\HH^*:=\HH \cup \PP^1(\QQ) = \HH \cup \QQ \cup \{\infty\}$.
The action of $\Gamma$ extends to $\HH^*$ and we can identify the quotient $\Gamma\backslash \HH^*$ with $X_\Gamma$.
In particular, the cusps correspond to the $\Gamma$-orbits of $\QQ\cup \{\infty\}$.

\begin{lemma} \label{L:cusp orbits}
Let $a/b$ and $\alpha/\beta$ be elements of $\QQ\cup \{\infty\}$ with $\gcd(a,b)=\gcd(\alpha,\beta)=1$ (where we take $\infty=\pm 1/0$).   Then $\Gamma \cdot a/b = \Gamma \cdot \alpha/\beta$ if and only if  $\gamma  \big(\begin{smallmatrix} a \\ b\end{smallmatrix}\big) \equiv \pm  \big(\begin{smallmatrix} \alpha \\ \beta \end{smallmatrix}\big) \pmod{N}$ 
for some $\gamma \in \Gamma$.
\end{lemma}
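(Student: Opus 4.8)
The plan is to reduce the statement about $\Gamma$-orbits on $\PP^1(\QQ)$ to a statement about primitive integral column vectors, and then to settle the one nontrivial implication by an explicit construction of an element of $\Gamma(N)$.

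First I would set up the dictionary between cusps and vectors. Writing $\binom{a}{b}$ for a primitive integral column vector (so $\gcd(a,b)=1$, with $\infty$ corresponding to $\pm\binom{1}{0}$), the assignment $\binom{a}{b}\mapsto a/b$ is a surjection from primitive vectors onto $\PP^1(\QQ)=\QQ\cup\{\infty\}$ whose fibres are exactly the pairs $\{\pm v\}$; moreover, for $\gamma\in\SL_2(\ZZ)$ the linear fractional action on $a/b$ corresponds to $v\mapsto\gamma v$, which is again primitive since $\gamma\in\SL_2(\ZZ)$ preserves primitivity. Consequently $\Gamma\cdot a/b=\Gamma\cdot\alpha/\beta$ if and only if $\gamma\binom{a}{b}=\pm\binom{\alpha}{\beta}$ for some $\gamma\in\Gamma$, an equality of vectors over $\ZZ$. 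The forward implication of the lemma is then immediate: such an equality reduces to the asserted congruence modulo $N$.

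For the converse, suppose $\gamma\in\Gamma$ satisfies $\gamma\binom{a}{b}\equiv\pm\binom{\alpha}{\beta}\pmod{N}$, and set $v:=\gamma\binom{a}{b}$, a primitive vector. Replacing $v$ by $-v$ if necessary (which does not change the corresponding point of $\PP^1(\QQ)$), we may assume $v\equiv\binom{\alpha}{\beta}\pmod{N}$. The key claim is that two primitive vectors congruent modulo $N$ lie in the same $\Gamma(N)$-orbit: there exists $\delta\in\Gamma(N)$ with $\delta\binom{\alpha}{\beta}=v$. Granting this, since $\Gamma(N)\subseteq\Gamma$ the point of $\PP^1(\QQ)$ attached to $v$ lies in $\Gamma\cdot\alpha/\beta$; but it also equals $\gamma\cdot a/b$, so it lies in $\Gamma\cdot a/b$ as well. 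Thus the two $\Gamma$-orbits meet, hence coincide.

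It remains to prove the key claim, which I expect to be the main point of the argument. Using that $\SL_2(\ZZ)$ acts transitively on primitive vectors and that $\Gamma(N)$ is normal in $\SL_2(\ZZ)$, I would first conjugate by a suitable element of $\SL_2(\ZZ)$ to reduce to the case $\binom{\alpha}{\beta}=\binom{1}{0}$; then $v=\binom{1+Na}{Nb}$ for some integers $a,b$, and primitivity of $v$ forces $\gcd(1+Na,\,Nb)=1$. An element $\delta\in\Gamma(N)$ with first column $v$ must have the form $\delta=\smallmat{1+Na}{Nc}{Nb}{1+Nd}$, and the determinant condition $\det\delta=1$ becomes, after dividing by $N$, the linear Diophantine equation $(1+Na)d-(Nb)c=-a$ in the unknowns $c,d$, which is solvable in integers precisely because $\gcd(1+Na,\,Nb)=1$. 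Any solution produces the desired $\delta$. The only things to watch are the $\infty=\pm1/0$ bookkeeping and the sign ambiguities, both harmless since the conclusion concerns points of $\PP^1(\QQ)$ rather than vectors.
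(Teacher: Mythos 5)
Your proof is correct and takes essentially the same route as the paper: the paper simply cites Shimura's Lemma~1.42 for the key fact (equivalent to your ``key claim'' that primitive vectors congruent modulo $N$ lie in a single $\Gamma(N)$-orbit) and remarks that the general case follows easily, and your argument supplies exactly that deduction together with a self-contained proof of the cited fact via transitivity of $\SL_2(\ZZ)$ on primitive vectors, normality of $\Gamma(N)$, and the solvability of the resulting linear Diophantine equation. All steps, including the sign and $\infty=\pm 1/0$ bookkeeping, check out.
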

\begin{proof}
For the case $\Gamma=\Gamma(N)$, see \cite{MR1291394}*{Lemma~1.42}.  The general case follows easily.
\end{proof}

Let $\pm \Gamma$ be the congruence subgroup generated by $-I$ and $\Gamma$.    From Lemma~\ref{L:cusp orbits}, we find that the cusps of $X_\Gamma$ correspond with the orbits of $\pm \Gamma$ on the set of $\big(\begin{smallmatrix} a \\ b\end{smallmatrix}\big) \in (\ZZ/N\ZZ)^2$ of order $N$.  Using this, it is straightforward to find representatives of the cusps of $X_\Gamma$.

\subsection{Modular functions}

A \defi{modular function} for $\Gamma$ is a meromorphic function of $X_\Gamma$; they correspond to meromorphic functions $f$ of $\HH$ that satisfy $f(\gamma \tau)=f(\tau)$ for all $\gamma \in \Gamma$ and are meromorphic at the cusps.  The function field $\CC(X_\Gamma)$ of $X_\Gamma$ consists of the meromorphic functions of $X_\Gamma$.

Let $\tau$ be a variable of the upper half plane.
Let $w$ be the width of the cusp at $\infty$, i.e., the smallest positive integer for which $\big(\begin{smallmatrix} 1 & w \\ 0 & 1 \end{smallmatrix}\big)$ is an element of $\Gamma$; it is a divisor of $N$.
For any rational number $m$,  define $q^{m} := e^{2\pi i m \tau}$.
Then any modular function $f$ for $\Gamma$ has a unique $q$-expansion
\[
f(\tau) = \sum_{n \in \ZZ } c_n q^{n/w},
\]
where the $c_n$ are complex numbers that are $0$ for all but finitely many $n<0$.   We will often refer to the $c_n$ as the \defi{coefficients }of $f$.

\subsection{Field of modular functions}\label{S:modular functions}
Fix a positive integer $N$.  Denote by $\calF_N$ the field of meromorphic functions of the Riemann surface $X(N)$ whose $q$-expansions have coefficients in $K_N:=\QQ(\zeta_N)$.
For example, $\calF_1=\QQ(j)$, where $j$ is the modular $j$-invariant.

For $f\in \calF_N$ and $\gamma\in \SL_2(\ZZ)$, let $f|_{\gamma} \in \calF_N$ denote the modular function satisfying $f|_{\gamma}(\tau)=f(\gamma\tau)$.

For each $d\in (\ZZ/N\ZZ)^\times$, let $\sigma_d$ be the automorphism of $K_N$ satisfying $\sigma_d(\zeta_N)=\zeta_N^d$.
We extend $\sigma_d$ to an automorphism of the field $\calF_N$ by defining
\[
\sigma_d(f) := \sum_n \sigma_d(c_n) q^{n/N},
\]
where $f$ has expansion $\sum_n c_n q^{n/N}$.  We now recall some facts about the extension $\calF_N$ of $\calF_1=\QQ(j)$.

\begin{proposition} \label{P:FN Galois}
The extension $\calF_N$ of $\QQ(j)$ is Galois.   There is a unique isomorphism
\[
\theta_N \colon \GL_2(\ZZ/N\ZZ)/\{\pm I\} \xrightarrow{\sim} \Gal(\calF_N/\QQ(j))
\]
such that the following hold for all $f\in \calF_N$:
\begin{alphenum}
\item \label{P:FN Galois a}
For $g\in \SL_2(\ZZ/N\ZZ)$, we have $\theta_N(g) f = f|_{\gamma^t}$, where $\gamma$ is any matrix in $\SL_2(\ZZ)$ that is congruent to $g$ modulo $N$ and $\gamma^t$ is the transpose of $\gamma$.
\item
For $g=\big(\begin{smallmatrix} 1 & 0  \\  0 & d\end{smallmatrix}\big) \in \GL_2(\ZZ/N\ZZ)$, we have $\theta_N(g) f = \sigma_d(f)$.
\end{alphenum}
Moreover, the algebraic closure of $\QQ$ in $\calF_N$ is $\QQ(\zeta_N)$; it corresponds to the subgroup $\SL_2(\ZZ/N\ZZ)/\{\pm I\}$.
\end{proposition}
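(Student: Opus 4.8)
The plan is to present $\calF_N$ via explicit generators — the \emph{Fricke functions} — together with two explicit families of automorphisms over $\QQ(j)$, namely $f\mapsto f|_{\gamma^t}$ for $\gamma\in\SL_2(\ZZ)$ and $f\mapsto\sigma_d(f)$ for $d\in(\ZZ/N\ZZ)^\times$, and then to show by an Artin's-lemma-plus-degree-count argument that these generate all of $\Aut(\calF_N/\QQ(j))$. The classical inputs I would take for granted are: (i) for $v\in\tfrac1N\ZZ^2/\ZZ^2$ of exact order $N$, the Fricke function $f_v$ (a suitably normalized value of the Weierstrass $\wp$-function at the $N$-torsion point $v$) is a modular function for $\Gamma(N)$ whose $q$-expansion has coefficients in $\QQ(\zeta_N)$; (ii) $f_v$ depends only on $\pm v$, the group $\SL_2(\ZZ)$ permutes the $f_v$ through the standard action on the index $v$, and $\sigma_d$ rescales the second coordinate of $v$ by $d$; and (iii) $\CC(X(N))=\CC(j)(f_v:v)$, which is Galois over $\CC(j)=\CC(X(1))$ with group $\SL_2(\ZZ/N\ZZ)/\{\pm I\}$ because $X(N)\to X(1)$ is a Galois branched cover. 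Combining (i)--(iii) with the $q$-expansion principle (discussed below) gives $\calF_N=\QQ(\zeta_N)(j)(f_v:v)$; in particular $\calF_N$ is stable under each operator $|_{\gamma^t}$ and each $\sigma_d$, and these fix $\QQ(j)$ because $j|_\gamma=j$ and $\sigma_d(j)=j$.

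Next I would assemble the homomorphism $\theta_N$. Let $\Theta\subseteq\Aut(\calF_N/\QQ(j))$ be the subgroup generated by all $|_{\gamma^t}$ and all $\sigma_d$. Since $\SL_2(\ZZ/N\ZZ)$ together with the matrices $\smallmat{1}{0}{0}{d}$ generate $\GL_2(\ZZ/N\ZZ)$, since $\Gamma(N)$ and $-I$ act trivially on modular functions, and since $f|_{\alpha\beta}=(f|_\alpha)|_\beta$ — which is exactly why the transpose in (a), converting the right action $v\mapsto v\gamma$ into a left action, is forced — the rules (a) and (b) extend to a surjective homomorphism $\theta_N\colon\GL_2(\ZZ/N\ZZ)/\{\pm I\}\twoheadrightarrow\Theta$. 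For injectivity I would write a general $g$ as $s\,\smallmat{1}{0}{0}{d}$ with $s\in\SL_2(\ZZ/N\ZZ)$ and $d=\det g$: if $\theta_N(g)=\id$ then $\theta_N(s)=\sigma_d^{-1}$, and evaluating on the constants $\QQ(\zeta_N)\subseteq\calF_N$, which every $|_{\gamma^t}$ fixes, forces $d\equiv 1$; then $|_{s^t}$ fixes every $f_v$, which by (ii) forces $s\equiv\pm I$. Hence $\theta_N$ is an isomorphism onto $\Theta$.

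To conclude I would compute $\calF_N^{\Theta}$. An element fixed by every $|_{\gamma^t}$ is invariant under all of $\SL_2(\ZZ)$, hence descends to $X(1)$ and lies in $\CC(j)$; being also fixed by every $\sigma_d$ forces its $q$-expansion, and therefore (using that $j$ has rational $q$-expansion) the function itself, to lie in $\QQ(j)$. Thus $\calF_N^{\Theta}=\QQ(j)$, so by Artin's lemma $\calF_N/\QQ(j)$ is Galois with group $\Theta$ and $[\calF_N:\QQ(j)]=|\Theta|=|\GL_2(\ZZ/N\ZZ)/\{\pm I\}|$; combined with the previous paragraph this is the asserted isomorphism $\theta_N$. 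Uniqueness is then immediate, since (a) and (b) already pin down $\theta_N$ on the generating set $\SL_2(\ZZ/N\ZZ)\cup\{\smallmat{1}{0}{0}{d}:d\in(\ZZ/N\ZZ)^\times\}$. For the last assertion: a non-constant element of $\calF_N$ is a non-constant meromorphic function on the compact Riemann surface $X(N)$, hence transcendental over $\CC$, so the algebraic closure of $\QQ$ in $\calF_N$ is exactly its field of constants, which is $\QQ(\zeta_N)$ by the definition of $\calF_N$; and the argument just given identifies the fixed field of $\theta_N(\SL_2(\ZZ/N\ZZ)/\{\pm I\})$ with the $\SL_2(\ZZ)$-invariant part of $\calF_N$, i.e. with the rational functions of $j$ having $q$-expansion in $\QQ(\zeta_N)$, i.e. with $\QQ(\zeta_N)(j)=\QQ(\zeta_N)\cdot\QQ(j)$ — the statement that this subgroup corresponds to the constant-field extension $\QQ(\zeta_N)$, the complementary quotient $(\ZZ/N\ZZ)^\times$ acting via $\det$ and realizing $\Gal(\QQ(\zeta_N)/\QQ)$ through the $\sigma_d$.

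The step I expect to carry the real weight, used above without proof, is the \emph{$q$-expansion principle}: the identification of $\calF_N$ — a priori \emph{all} modular functions for $\Gamma(N)$ with $q$-expansion over $\QQ(\zeta_N)$ — with $\QQ(\zeta_N)(j)(f_v:v)$, equivalently the stability of $\calF_N$ under the operators $|_{\gamma^t}$. The route I would take is: fix a $\QQ(\zeta_N)(j)$-basis $\{m_i\}$ of $\QQ(\zeta_N)(j)(f_v:v)$, observe that by (iii) and a dimension count it is also a $\CC(j)$-basis of $\CC(X(N))$, write an arbitrary $f\in\calF_N$ as $f=\sum_i R_i(j)m_i$ with $R_i\in\CC(j)$, and then use that the $q$-expansions of $f$ and of all the $m_i$ lie in $\QQ(\zeta_N)(\!(q^{1/N})\!)$ to force $R_i\in\QQ(\zeta_N)(j)$ and hence $f\in\QQ(\zeta_N)(j)(f_v:v)$. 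This, together with the explicit computation of the $q$-expansions and transformation laws of the Fricke functions in (i)--(ii), is where essentially all of the technical content sits; the remainder is the Galois-theoretic bookkeeping sketched above, and the whole statement is classical.
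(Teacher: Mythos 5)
The paper gives no argument for this proposition: its ``proof'' is a one-line citation to Kubert--Lang, Ch.~2~\S2, treating the statement as classical. Your sketch is a correct reconstruction of that classical argument (essentially Shimura \S6.1--6.2, or Lang's \emph{Elliptic Functions}): Fricke functions as explicit generators, the operators $f\mapsto f|_{\gamma^t}$ and $f\mapsto\sigma_d(f)$, Artin's lemma plus a degree count, and the $q$-expansion principle correctly identified as the step carrying the technical weight; your remark that the transpose is exactly what converts the right action $(f|_\alpha)|_\beta=f|_{\alpha\beta}$ into a left action matches the paper's own parenthetical comparison with the (untransposed, right) action in the cited source. I see no gap beyond the classical inputs you explicitly flag as being taken for granted.
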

\begin{proof}
This is well known; see \cite{MR648603}*{Ch.~2~\S2} for a summary (where the action given is a right action obtained as above but without the transpose in (\ref{P:FN Galois a})).
\end{proof}

Throughout the rest of the paper, we let $\GL_2(\ZZ/N\ZZ)$ act on $\calF_N$ via the homomorphism $\theta_N$ of Proposition~\ref{P:FN Galois}.
We set $g_*(f):=\theta_N(g)(f)$ for $g\in \GL_2(\ZZ/N\ZZ)$ and $f\in \calF_N$.

\begin{remark} 
There are other natural actions of $\GL_2(\ZZ/N\ZZ)$ on $\calF_N$; for example, one could replace $\gamma^t$ in condition (\ref{P:FN Galois a}) by $\gamma^{-1}$ or just act on the right.  Our choice is motivated by Proposition~\ref{P:XG rational} below.
\end{remark}

\subsection{Modular curves} \label{S:modular curves}

Let $G$ be a subgroup of $\GL_2(\ZZ/N\ZZ)$ satisfying $-I \in G$ and $\det(G)=(\ZZ/N\ZZ)^\times$.
Let $\calF_N^G$ be the subfield of $\calF_N$ fixed by the action of $G$ from Proposition~\ref{P:FN Galois}.
Proposition~\ref{P:FN Galois} and the assumption $\det(G)=(\ZZ/N\ZZ)^\times$ imply that $\QQ$ is algebraically closed in $\calF_N^G$. \\

The \defi{modular curve} $X_G$ associated with $G$ is the smooth projective curve with function field $\calF_N^G$.
The curve $X_G$ is defined over $\QQ$ and is geometrically irreducible.
The inclusion of fields $\calF_N^G \supseteq \calF_1=\QQ(j)$ gives rise to a non-constant morphism
\[
\pi_G\colon X_G \to \Spec \QQ[j] \cup \{\infty\} = \PP^1_\QQ
\]
of degree $[\GL_2(\ZZ/N\ZZ):G]$.  Moreover, given another group $G\subseteq G' \subseteq \GL_2(\ZZ/N\ZZ)$, the inclusion $\calF_N^{G'} \subseteq \calF_N^G$ induces a nonconstant morphism $X_{G}\to X_{G'}$ of degree $[G':G]$.  Composing $X_G\to X_{G'}$ with $\pi_{G'}$ gives the morphism $\pi_G$.

Let $\Gamma$ be the congruence subgroup consisting of $\gamma \in \SL_2(\ZZ)$ for which $\gamma^t$ modulo $N$ lies in $G\cap\SL_2(\ZZ/N\ZZ)$.    The level of $\Gamma$ divides, but need not equal, $N$.   

\begin{lemma} \label{L:modular connection}
\begin{romanenum}
\item \label{L:modular connection i}
The field $K_N(X_G)$, i.e., the function field of the base extension of $X_G$ to $K_N$, is the field consisting of $f\in \calF_N$ satisfying $f|_\gamma=f$ for all $\gamma\in \Gamma$.

\item \label{L:modular connection ii}
The genus of the modular curve $X_G$ is equal to the genus of $\Gamma$.
\end{romanenum}
\end{lemma}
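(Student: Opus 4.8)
The plan is to deduce both statements from the Galois theory of $\calF_N$ over $\QQ(j)$ recorded in Proposition~\ref{P:FN Galois}, transported along the base changes from $\QQ$ to $K_N$ and then to $\CC$, and to use the defining property of $\Gamma$ to rewrite ``fixed by $G\cap\SL_2(\ZZ/N\ZZ)$'' as ``$\Gamma$-invariant''.

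For part~(i): the hypothesis $\det(G)=(\ZZ/N\ZZ)^\times$ guarantees that $\QQ$ is algebraically closed in $\calF_N^G=\QQ(X_G)$, so $\calF_N^G\otimes_\QQ K_N$ is a field, equal to the compositum $\calF_N^G\cdot K_N$ inside $\calF_N$, and it is the function field $K_N(X_G)$ of the base extension $(X_G)_{K_N}$. Since $K_N$ is the subfield of $\calF_N$ fixed by $\SL_2(\ZZ/N\ZZ)/\{\pm I\}$ (the final assertion of Proposition~\ref{P:FN Galois}), the Galois correspondence for $\calF_N/\QQ(j)$ identifies $\calF_N^G\cdot K_N$ with the fixed field $\calF_N^{G\cap\SL_2(\ZZ/N\ZZ)}$, using $-I\in G$. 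By Proposition~\ref{P:FN Galois}\,(\ref{P:FN Galois a}), the element $g\in\SL_2(\ZZ/N\ZZ)$ acts on $\calF_N$ by $f\mapsto f|_{\gamma^t}$ for any lift $\gamma\in\SL_2(\ZZ)$ of $g$, so $f$ lies in $\calF_N^{G\cap\SL_2(\ZZ/N\ZZ)}$ exactly when $f|_{\gamma^t}=f$ for every $\gamma\in\SL_2(\ZZ)$ with $\gamma\bmod N\in G\cap\SL_2(\ZZ/N\ZZ)$. Since transposition is a bijection of $\SL_2(\ZZ)$ and $\Gamma$ consists, by definition, of those $\delta\in\SL_2(\ZZ)$ with $\delta^t\bmod N\in G\cap\SL_2(\ZZ/N\ZZ)$, the matrices $\gamma^t$ occurring above are precisely the elements of $\Gamma$, and the condition becomes $f|_\delta=f$ for all $\delta\in\Gamma$. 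This is~(i).

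For part~(ii): the genus of a smooth projective geometrically integral curve is unchanged by extension of the base field, so it suffices to show $(X_G)_\CC\cong X_\Gamma$ as curves over $\CC$, equivalently as compact Riemann surfaces. Base changing the identity from~(i) gives $\CC(X_G)=\Frac(\calF_N^G\otimes_\QQ\CC)=\Frac(\calF_N^{G\cap\SL_2(\ZZ/N\ZZ)}\otimes_{K_N}\CC)$; using that $\CC$ is flat over $K_N$ and that $\calF_N\otimes_{K_N}\CC$ is a domain (because $\calF_N/K_N$ is a regular extension), one checks that $\CC(X_G)$ is the subfield of $\Frac(\calF_N\otimes_{K_N}\CC)$ fixed by $G\cap\SL_2(\ZZ/N\ZZ)$. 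Now $\Frac(\calF_N\otimes_{K_N}\CC)$ embeds into $\CC(X(N))$, the field of meromorphic functions on the Riemann surface $X(N)$, and a degree count over $\CC(j)$ (both sides have degree $[\calF_N:K_N(j)]$, which one checks equals the degree of the covering $X(N)\to X(1)$) shows the embedding is an isomorphism. Repeating the transpose computation from~(i), the subfield of $\CC(X(N))$ fixed by $G\cap\SL_2(\ZZ/N\ZZ)$ is the field of $\Gamma$-invariant meromorphic functions on $X(N)$, which is $\CC(X_\Gamma)$ since $X_\Gamma$ is the quotient of $X(N)$ by the action of $\Gamma$. Hence $\CC(X_G)\cong\CC(X_\Gamma)$, so $(X_G)_\CC\cong X_\Gamma$, and the two genera agree.

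The main obstacle is the input used in part~(ii) that $\Frac(\calF_N\otimes_{K_N}\CC)$ is the field of \emph{all} meromorphic functions on the analytic curve $X(N)$ -- equivalently, that the smooth projective $K_N$-curve with function field $\calF_N$ has $X(N)$ as its set of complex points. This is standard (see the references cited for Proposition~\ref{P:FN Galois}), and once it is granted the remainder is bookkeeping: tracking the transpose in Proposition~\ref{P:FN Galois}\,(\ref{P:FN Galois a}) and the role of $-I$ in the Galois correspondence, and checking the routine compatibility of the identification $X_\Gamma=\Gamma\backslash X(N)$ with the slash action.
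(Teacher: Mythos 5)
Your proof is correct and follows essentially the same route as the paper: part (i) is the identical Galois-correspondence argument (compositum with $K_N$ corresponds to $G\cap\SL_2(\ZZ/N\ZZ)$, then unwind the transpose in Proposition~\ref{P:FN Galois}(\ref{P:FN Galois a})), and part (ii) rests on the same comparison of degrees between the algebraic and analytic function fields over $\CC(j)$. The only cosmetic difference is that in (ii) the paper compares $\CC\cdot K_N(X_G)$ directly with $\CC(X_\Gamma)$ via the inclusion plus the count $[\CC(X_\Gamma):\CC(j)]=[\SL_2(\ZZ):\Gamma]$, whereas you perform the analogous identification once at the top level $\CC(X(N))$ and then descend by taking $G\cap\SL_2(\ZZ/N\ZZ)$-invariants.
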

\begin{proof}
Proposition~\ref{P:FN Galois} implies that $K_N$ is algebraically closed in $\calF_N$ and that we have an isomorphism $\Gal(\calF_N/K_N(j))\xrightarrow{\sim} \SL_2(\ZZ/N\ZZ)/\{\pm I\}$.   Thus $K_N(X_G)$ is the subfield of $\calF_N$ fixed by $G\cap \SL_2(\ZZ/N\ZZ)$.    Part (\ref{L:modular connection i}) is now clear.

Since $K_N$ is algebraically closed in $\calF_N$ and $\QQ$ is algebraically closed in $\QQ(X_G)$, we have 
\[
[\CC\cdot K_N(X_G): \CC(j)]=[K_N(X_G):K_N(j)]=[\QQ(X_G):\QQ(j)]=[\GL_2(\ZZ/N\ZZ):G].
\]
Since $\det(G)=(\ZZ/N\ZZ)^\times$, we deduce that $[\CC\cdot K_N(X_G): \CC(j)]=[\SL_2(\ZZ):\Gamma]$.

Clearly each $f\in K_N(X_G)$ is a modular function for $\Gamma$, thus $\CC\cdot K_N(X_G) \subseteq \CC(X_\Gamma)$.  We in fact have $\CC\cdot K_N(X_G)=\CC(X_\Gamma)$, since $[\CC\cdot K_N(X_G): \CC(j)]=[\SL_2(\ZZ):\Gamma]=[\CC(X_\Gamma):\CC(j)]$.   The curve $X_G$ has the same genus as the Riemann surface $X_\Gamma$ because $\CC(X_G)=\CC(X_\Gamma)$.
\end{proof}

\begin{remark} \label{R:transpose business}
Another natural congruence subgroup to study is the congruence subgroup $\Gamma'$  consisting of $\gamma\in \SL_2(\ZZ)$ such that $\gamma$ modulo $N$ lies in $G\cap\SL_2(\ZZ/N\ZZ)$, which we use later in the paper.    Observe that the congruence subgroups $\Gamma$ and $\Gamma'$ are conjugate in $\SL_2(\ZZ)$; indeed, we have $B^{-1} \gamma B = (\gamma^t)^{-1}$ for all $\gamma\in \SL_2(\ZZ)$, where $B:=\big(\begin{smallmatrix} 0 & 1 \\ -1 & 0\end{smallmatrix}\big)$.   Thus $\Gamma$ and $\Gamma'$ have the same genus.
\end{remark}

The following proposition is crucial to our application.

\begin{proposition} \label{P:XG rational}
Let $E$ be an elliptic curve defined over $\QQ$ with $j_E \notin \{0,1728\}$.
Then $\rho_{E,N}(\Gal_\QQ)$  is conjugate in $\GL_2(\ZZ/N\ZZ)$ to a subgroup of $G$ if and only if $j_E$ belongs to $\pi_G(X_G(\QQ))$.
\end{proposition}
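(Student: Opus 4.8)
The plan is to compare the rational points of $X_G$ above $j_0:=j_E$ with the $\Gal_\QQ$-action on a fibre of the Galois cover attached to the field extension $\calF_N/\QQ(j)$. Let $\tilde X$ be the smooth projective $\QQ$-curve with function field $\calF_N$, so that $\pi_G$ factors as $\tilde X\to X_G\to\PP^1_\QQ$, where $\tilde X\to X_G$ is the quotient by $\bar G:=G/\{\pm I\}$ and, by Proposition~\ref{P:FN Galois}, $\tilde X\to\PP^1_\QQ$ is Galois with group $\GL_2(\ZZ/N\ZZ)/\{\pm I\}$ acting through $\theta_N$. Since $j_0\in\QQ$ and $j_0\notin\{0,1728\}$, the point $j_0$ lies outside the branch locus $\{0,1728,\infty\}$ of $\tilde X\to\PP^1_\QQ$; hence that cover, and therefore $\pi_G$, is finite \'etale over a neighbourhood of $j_0$. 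It follows that $\pi_G^{-1}(j_0)$ is the spectrum of a finite \'etale $\QQ$-algebra, so $j_0\in\pi_G(X_G(\QQ))$ if and only if the finite $\Gal_\QQ$-set $\pi_G^{-1}(j_0)(\overline\QQ)$ has a fixed point. Everything therefore comes down to describing this set together with its Galois action.

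Since $\tilde X\to\PP^1_\QQ$ is Galois and \'etale over $j_0$, the geometric fibre $\tilde X_{j_0}(\overline\QQ)$ is a torsor under $\GL_2(\ZZ/N\ZZ)/\{\pm I\}$, and $\pi_G^{-1}(j_0)(\overline\QQ)$ is its quotient by $\bar G$. To compute the $\Gal_\QQ$-action on $\tilde X_{j_0}(\overline\QQ)$ I would use the modular interpretation of $X(N)$: a geometric point of $\tilde X$ above $j_0$ corresponds to a basis of $E[N]$ taken modulo $\pm 1$ — this is where $j_0\notin\{0,1728\}$ is used, since then $\Aut(E_{\overline\QQ})=\{\pm1\}$ and no additional twisting enters — and $\sigma\in\Gal_\QQ$ acts on such a basis through the Galois action on $E[N]$, that is, through $\rho_{E,N}$. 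Taking as base point the basis of $E[N]$ used to define $\rho_{E,N}$, one obtains a $\Gal_\QQ$-equivariant identification $\tilde X_{j_0}(\overline\QQ)\cong\GL_2(\ZZ/N\ZZ)/\{\pm I\}$ on which $\Gal_\QQ$ acts by translation through $\rho_{E,N}$; the transpose appearing in part~(\ref{P:FN Galois a}) of Proposition~\ref{P:FN Galois} is exactly what reconciles the deck action (through $\theta_N$) with the linear action on bases, so that one recovers $\rho_{E,N}$ itself and not its transpose or inverse (cf.\ the remark after Proposition~\ref{P:FN Galois} and Remark~\ref{R:transpose business}). Passing to the quotient by $\bar G$ and using $-I\in G$ to identify $\bar G\backslash\big(\GL_2(\ZZ/N\ZZ)/\{\pm I\}\big)$ with $G\backslash\GL_2(\ZZ/N\ZZ)$, we get an isomorphism of $\Gal_\QQ$-sets $\pi_G^{-1}(j_0)(\overline\QQ)\cong G\backslash\GL_2(\ZZ/N\ZZ)$ whose residual action is induced by $\rho_{E,N}$.

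It then remains to unwind the fixed-point condition. A coset $Gg$ is fixed by every $\sigma\in\Gal_\QQ$ precisely when $g\,\rho_{E,N}(\sigma)\,g^{-1}\in G$ for all $\sigma$, i.e.\ $g\,\rho_{E,N}(\Gal_\QQ)\,g^{-1}\subseteq G$; such a $g$ exists if and only if $\rho_{E,N}(\Gal_\QQ)$ is conjugate in $\GL_2(\ZZ/N\ZZ)$ to a subgroup of $G$. (Because $-I\in G$, the group $\rho_{E,N}(\Gal_\QQ)$ is conjugate into $G$ if and only if its image modulo $\pm I$ is conjugate into $\bar G$, so it is immaterial that $\rho_{E,N}(\Gal_\QQ)$ need not contain $-I$.) Together with the first paragraph, this yields the claimed equivalence.

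I expect the main obstacle to be the middle step: making the modular-interpretation description of $\tilde X_{j_0}(\overline\QQ)$ entirely precise as a $\Gal_\QQ$-set, and in particular checking that the $\GL_2(\ZZ/N\ZZ)$-action furnished by $\theta_N$ (with its transpose) matches the linear action on $N$-torsion bases so that the residual Galois action is $\rho_{E,N}$ up to conjugacy, with the left/right conventions kept consistent throughout. The rest is formal once one knows that $j_0$ avoids the branch locus. For the moduli-theoretic input I would cite a standard reference such as \cite{MR0337993} or \cite{MR1291394}.
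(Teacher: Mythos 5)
Your argument is correct and is essentially the one in \S 3 of \cite{PossibleImages}, which is all the paper itself offers for this proposition (its ``proof'' is just that citation). The reduction to a $\Gal_\QQ$-fixed point on $G\backslash\GL_2(\ZZ/N\ZZ)$, the use of $j_E\notin\{0,1728\}$ to force $\Aut(E_{\Qbar})=\{\pm 1\}$, and the use of $-I\in G$ to absorb the sign ambiguity all match the standard treatment; the one step you defer --- verifying that the action $\theta_N$ (with its transpose) on the fibre of $\calF_N/\QQ(j)$ over $j_E$ agrees with the action of $\rho_{E,N}$ on torsion bases --- is exactly the computation carried out in the cited reference, via specialization of the Weber/Fricke functions at $j_E$.
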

\begin{proof}
See \S3 of \cite{PossibleImages} for a proof.
\end{proof}

\subsection{Modular curves and open subgroups} \label{SS:modular curves}

Fix an open subgroup $G$ of $\GL_2(\Zhat)$ that satisfies $-I\in G$ and $\det(G)=\Zhat^\times$.    Let $N\geq 1$ be an integer that is divisible by the level of $G$.  Define the modular curve
\[
X_G:=X_{\bbar G},
\]
where $\bbar G\subseteq \GL_2(\ZZ/N\ZZ)$ is the image of $G$ modulo $N$.  Observe that the modular curve $X_G$ and its function field do not depend on the initial choice of $N$.

Every (open) subgroup $G'$ of $\GL_2(\Zhat)$ that contains $G$ satisfies $-I\in G'$ and $\det(G')=\Zhat^\times$, and we have a morphism $X_{G}\to X_{G'}$.  With $G'=\GL_2(\Zhat)$, we obtain a morphism $\pi_G\colon X_{G}\to X_{G'}=\PP^1_\QQ$ to the $j$-line that agrees with $\pi_{\bbar G}$.  The following is equivalent to Proposition~\ref{P:XG rational}.

\begin{proposition} \label{P:XG rational 2}
Let $E$ be an elliptic curve defined over $\QQ$ with $j_E \notin \{0,1728\}$.  Then $\rho_{E}(\Gal_\QQ)$ is conjugate in $\GL_2(\Zhat)$ to a subgroup of $G$ if and only if $j_E$ belongs to $\pi_G(X_G(\QQ))$.\qed
\end{proposition}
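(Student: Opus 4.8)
The plan is to reduce the statement directly to Proposition~\ref{P:XG rational}, whose level-$N$ version is exactly the same assertion with $\GL_2(\Zhat)$ replaced by $\GL_2(\ZZ/N\ZZ)$. Fix an integer $N$ divisible by the level of $G$ and let $\bbar G\subseteq \GL_2(\ZZ/N\ZZ)$ be the image of $G$ under the reduction map $r_N\colon \GL_2(\Zhat)\to\GL_2(\ZZ/N\ZZ)$. Since $-I\in G$ and $\det(G)=\Zhat^\times$, and $r_N$ and $\det$ are compatible with reduction, we get $-I\in\bbar G$ and $\det(\bbar G)=(\ZZ/N\ZZ)^\times$, so Proposition~\ref{P:XG rational} applies to $\bbar G$. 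By construction $X_G=X_{\bbar G}$ and $\pi_G=\pi_{\bbar G}$ (as noted in \S\ref{SS:modular curves}), so $\pi_G(X_G(\QQ))=\pi_{\bbar G}(X_{\bbar G}(\QQ))$. Hence, for an elliptic curve $E/\QQ$ with $j_E\notin\{0,1728\}$, Proposition~\ref{P:XG rational} gives: $j_E\in\pi_G(X_G(\QQ))$ if and only if $\rho_{E,N}(\Gal_\QQ)$ is conjugate in $\GL_2(\ZZ/N\ZZ)$ to a subgroup of $\bbar G$.

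It therefore remains to prove the purely group-theoretic equivalence: $\rho_E(\Gal_\QQ)$ is conjugate in $\GL_2(\Zhat)$ to a subgroup of $G$ if and only if $\rho_{E,N}(\Gal_\QQ)$ is conjugate in $\GL_2(\ZZ/N\ZZ)$ to a subgroup of $\bbar G$. First I would record that, because $N$ is divisible by the level of $G$, the group $G$ equals the full preimage $r_N^{-1}(\bbar G)$: if $N_0\mid N$ is the level of $G$, then $G$ is the preimage of its image modulo $N_0$, the mod-$N_0$ reduction factors through $r_N$, and $\bbar G$ maps onto the image of $G$ modulo $N_0$, whence $r_N^{-1}(\bbar G)=G$. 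Also $r_N$ is surjective and $r_N(\rho_E(\Gal_\QQ))=\rho_{E,N}(\Gal_\QQ)$. Now for the forward direction, if $\alpha^{-1}\rho_E(\Gal_\QQ)\alpha\subseteq G$ with $\alpha\in\GL_2(\Zhat)$, applying $r_N$ yields $r_N(\alpha)^{-1}\rho_{E,N}(\Gal_\QQ)r_N(\alpha)\subseteq\bbar G$. For the converse, if $\bar\alpha^{-1}\rho_{E,N}(\Gal_\QQ)\bar\alpha\subseteq\bbar G$, lift $\bar\alpha$ to some $\alpha\in\GL_2(\Zhat)$ using surjectivity of $r_N$; then $r_N(\alpha^{-1}\rho_E(\Gal_\QQ)\alpha)\subseteq\bbar G$, so $\alpha^{-1}\rho_E(\Gal_\QQ)\alpha\subseteq r_N^{-1}(\bbar G)=G$. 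Combining this equivalence with the previous paragraph proves the proposition.

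There is no serious obstacle here: the substantive content is entirely carried by Proposition~\ref{P:XG rational} (cited from \cite{PossibleImages}) together with the construction of $X_G$ and $\pi_G$ for open subgroups of $\GL_2(\Zhat)$ recalled in \S\ref{SS:modular curves}. The only point deserving care is the identity $G=r_N^{-1}(\bbar G)$, which is precisely where the hypothesis that $N$ be divisible by the level of $G$ is used; without it the two conjugacy conditions need not be equivalent.
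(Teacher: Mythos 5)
Your proposal is correct and is precisely the argument the paper leaves implicit: Proposition~\ref{P:XG rational 2} is stated with a \qed{} as being ``equivalent to Proposition~\ref{P:XG rational},'' and your reduction via $X_G=X_{\bbar G}$, $\pi_G=\pi_{\bbar G}$, and the identity $G=r_N^{-1}(\bbar G)$ (which is exactly where divisibility of $N$ by the level of $G$ enters) is the intended justification. Nothing is missing.
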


\subsection{Complex conjugation} \label{SS:complex conj}

Fix a subgroup $G$ of $\GL_2(\ZZ/N\ZZ)$ satisfying $-I\in G$ and $\det(G)=(\ZZ/N\ZZ)^\times$.   For our curve $X_G$ to have rational points, we need $G$ to contain an element that ``looks like'' complex conjugation.

\begin{lemma} \label{L:complex conjugation}
For any elliptic curve $E/\QQ$ and integer $N>1$, the group $\rho_{E,N}(\Gal_\QQ)$ contains an element that is conjugate in $\GL_2(\ZZ/N\ZZ)$ to $\smallmat{1}{0}{0}{-1}$ or $\smallmat{1}{1}{0}{-1}$. 
\end{lemma}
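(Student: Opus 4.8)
The plan is to pin down the single element $M:=\rho_{E,N}(c)\in\GL_2(\ZZ/N\ZZ)$, where $c\in\Gal_\QQ$ is complex conjugation with respect to a fixed embedding $\Qbar\hookrightarrow\CC$, by establishing two facts: that $\det M=-1$, and that the fixed subgroup $E[N]^c:=\{P\in E[N]:cP=P\}$ contains a point of order $N$. Given these, an elementary change-of-basis argument over $\ZZ/N\ZZ$ finishes everything. I expect the second fact to be the crux: the purely group-theoretic conditions on $M$, namely $M^2=I$ (automatic from $c^2=1$) together with $\det M=-1$, do \emph{not} suffice — for example $\mathrm{diag}(3,5)\in\GL_2(\ZZ/8\ZZ)$ satisfies both but is conjugate to neither target matrix — so one genuinely needs the real geometry of $E$ to control the $+1$-eigenlattice of $M$.

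For the determinant, I would invoke that $\det\circ\,\rho_{E,N}$ is the mod-$N$ cyclotomic character $\chi_N$ (a consequence of the Galois-equivariance and surjectivity of the Weil pairing $E[N]\times E[N]\to\mu_N$), and that $c$ acts on $\mu_N\subset\CC$ by inversion; hence $\det M=\chi_N(c)=-1$. For the fixed point, I would observe that $E[N]^c=E[N]\cap E(\RR)$ and that $E(\RR)$ is a compact real Lie group of dimension $1$ containing the identity, so its identity component is isomorphic to $\RR/\ZZ$; the image of $1/N$ in $E(\RR)^\circ$ then gives a point of $E[N]$ of exact order $N$ fixed by $c$. (Alternatively one can argue from the uniformization $E(\CC)=\CC/\Lambda$ with $\Lambda$ chosen stable under $z\mapsto\bar z$, taking $\omega_1/N$ for a suitable real period $\omega_1$.)

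To finish, I would use that a point $v_1\in E[N]\cong(\ZZ/N\ZZ)^2$ of order $N$ is primitive and so extends to a basis $\{v_1,v_2\}$ of $E[N]$. Since $Mv_1=v_1$, in this basis $M=\smallmat{1}{\beta}{0}{\delta}$ for some $\beta,\delta\in\ZZ/N\ZZ$, and $\delta=\det M=-1$. Conjugating by $\smallmat{1}{x}{0}{1}$ replaces $\beta$ by $\beta-2x$, so $\beta$ is well-defined only modulo $\gcd(2,N)$; choosing $x$ appropriately makes $\beta=0$ when $\beta$ is even (in particular whenever $N$ is odd) and $\beta=1$ when $N$ is even and $\beta$ is odd. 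Thus $M$ is conjugate in $\GL_2(\ZZ/N\ZZ)$ to $\smallmat{1}{0}{0}{-1}$ or to $\smallmat{1}{1}{0}{-1}$.

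In short, the only step with real content is producing the order-$N$ element of $E[N]^c$; the rest is formal. One should avoid trying to prove the lemma by classifying all involutions of determinant $-1$ in $\GL_2(\ZZ/N\ZZ)$ up to conjugacy, since as the example above indicates there are such involutions lying outside both target conjugacy classes.
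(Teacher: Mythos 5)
Your proof is correct. The paper itself offers no argument here: it simply cites Proposition~3.5 of \cite{PossibleIndices} (whose proof is, in essence, the uniformization argument you mention parenthetically: choose a real Weierstrass model, so that the period lattice $\Lambda$ is stable under $z\mapsto \bar z$, and pick a basis with $\omega_1$ real; complex conjugation then acts on $\tfrac1N\Lambda/\Lambda$ by $\smallmat{1}{0}{0}{-1}$ or $\smallmat{1}{1}{0}{-1}$ according to the sign of the discriminant). Your route reaches the same conclusion slightly differently and a bit more cleanly: rather than normalizing a lattice basis, you extract only the two invariants that matter --- $\det M=\chi_N(c)=-1$ from the Weil pairing, and a fixed vector of exact order $N$ from the circle $E(\RR)^{\circ}\cong\RR/\ZZ$ --- and then do the linear algebra over $\ZZ/N\ZZ$. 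Each step checks out: an order-$N$ element of $(\ZZ/N\ZZ)^2$ is primitive and extends to a basis, and conjugation by $\smallmat{1}{x}{0}{1}$ shifts the off-diagonal entry by $2x$, which disposes of the two cases. Your cautionary example $\mathrm{diag}(3,5)\in\GL_2(\ZZ/8\ZZ)$ is also correct (its fixed vectors all have order dividing $4$), and it rightly identifies why the geometric input producing the order-$N$ fixed point cannot be avoided. The only cosmetic quibble is the phrase ``$\beta$ is even'' when $N$ is odd, where parity is not intrinsic; what matters is that $2$ is then invertible, as your argument in fact uses.
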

\begin{proof}
This follows from Proposition~3.5 of \cite{PossibleIndices} (and its proof for the cases $j_E \in \{0,1728\}$).
\end{proof}

Note that $\smallmat{1}{0}{0}{-1}$ and $\smallmat{1}{1}{0}{-1}$ are conjugate to each other in $\GL_2(\ZZ/N\ZZ)$ if $N$ is odd.   If $G$ does not contain an element that is conjugate in $\GL_2(\ZZ/N\ZZ)$ to $\smallmat{1}{0}{0}{-1}$ or $\smallmat{1}{1}{0}{-1}$, then $X_G(\QQ)$ must be empty since $X_G(\RR)$ is finite (by \cite{PossibleIndices}*{Proposition~3.5}), hence empty, since $X_G$ is non-singular.

\section{Group theoretic computations} \label{S:group theory}

We define an \defi{admissible group} to be an open subgroup $G$  of $\GL_2(\Zhat)$ for which the following conditions hold:
\begin{itemize}
\item
$G$ has prime power level,
\item
$-I \in G$ and $\det(G)=\Zhat^\times$,
\item
$G$ contains an element that is conjugate in $\GL_2(\Zhat)$ to $\smallmat{1}{0}{0}{-1}$ or $\smallmat{1}{1}{0}{-1}$. 
\end{itemize}

The condition $\det(G)=\Zhat^\times$ is needed for Proposition~\ref{P:XG rational 2} since we have $\det(\rho_{E}(\Gal_\QQ))=\Zhat^\times$.   If we were interested in elliptic curve defined over other number fields, then we could loosen this restriction which could increase the base field of the modular curve $X_G$.    

The condition $-I \in G$ is also needed in Proposition~\ref{P:XG rational 2}.   For an elliptic curve $E/\QQ$, there is a quadratic twist $E'/\QQ$, which automatically has the same $j$-invariant as $E$, such that $-I \in \rho_{E}(\Gal_\QQ)$.  

The last condition on $G$ is necessary in order for $X_G(\QQ)$ to be non-empty, as explained in \S\ref{SS:complex conj}.

\begin{proposition}
Let $G$ be an admissible group of genus $0$.  The set $X_G(\QQ)$ is infinite.
\end{proposition}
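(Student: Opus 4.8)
The plan is to show that $X_G$ is a smooth projective geometrically irreducible curve over $\QQ$ of genus $0$ that possesses a rational point; a genus $0$ curve over $\QQ$ with a rational point is isomorphic to $\PP^1_\QQ$, which has infinitely many rational points. Smoothness, projectivity, geometric irreducibility, and the fact that $X_G$ is defined over $\QQ$ are already built into the construction in \S\ref{S:modular curves}, and the genus $0$ hypothesis is part of ``admissible group of genus $0$.'' So the entire content is the production of a single rational point on $X_G$.

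First I would exhibit a rational point by analyzing the cusps. Let $N$ be the prime power level of $G$ and work with $\bbar G\subseteq\GL_2(\ZZ/N\ZZ)$. The cusp at $\infty$ of $X_G$ corresponds to the $\pm\Gamma$-orbit of $\big(\begin{smallmatrix}1\\0\end{smallmatrix}\big)$ under the description recalled after Lemma~\ref{L:cusp orbits}, and because $\det(G)=(\ZZ/N\ZZ)^\times$ the Galois action of $\Gal_\QQ$ on the set of cusps of $X_G$ factors through the action of $\GL_2(\ZZ/N\ZZ)/(\bbar G\cdot\SL_2(\ZZ/N\ZZ))$, which is trivial since $\det(\bbar G)=(\ZZ/N\ZZ)^\times$ forces $\bbar G\cdot\SL_2(\ZZ/N\ZZ)=\GL_2(\ZZ/N\ZZ)$; hence \emph{every} cusp of $X_G$ is fixed by $\Gal_\QQ$ and therefore the reduced cusp divisor is defined over $\QQ$. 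In fact one can do better: the cusp of width $w$ lying over $j=\infty$ and corresponding to the orbit of $\big(\begin{smallmatrix}1\\0\end{smallmatrix}\big)$ is an individual $\QQ$-rational point of $X_G$, since $\Gal_\QQ$ acts on the $q$-expansions in $\calF_N$ through $\sigma_d$ for $d\in(\ZZ/N\ZZ)^\times$ (Proposition~\ref{P:FN Galois}(b)) and the residue field of this cusp is $\QQ(\zeta_N)^{\langle\sigma_d:\,\mathrm{diag}(1,d)\in\bbar G\rangle}=\QQ$ because $\det(\bbar G)=(\ZZ/N\ZZ)^\times$. So $X_G$ has a $\QQ$-rational cusp.

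Next I would invoke the standard fact that a smooth projective geometrically integral curve $X/\QQ$ of genus $0$ with a $\QQ$-point is $\QQ$-isomorphic to $\PP^1_\QQ$ (anticanonical embedding, or: such a curve is a smooth plane conic, and a conic with a rational point is rational via projection from that point). Since $\PP^1_\QQ(\QQ)$ is infinite and $\pi_G$ is a finite, hence in particular generically one-to-one onto its image, $X_G(\QQ)$ is infinite. Alternatively, and even more concretely, this is exactly the statement made in the paper that the function field $\QQ(X_G)$ has the form $\QQ(t)$ for the genus $0$ admissible groups; once one knows $X_G$ has a rational point, this purely function-field formulation follows from the Riemann–Roch argument above.

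The main obstacle is the rational-point claim, i.e., showing that at least one cusp of $X_G$ is an honest $\QQ$-rational point rather than merely a $\Gal_\QQ$-stable closed point of higher degree; this is where the hypothesis $\det(G)=\Zhat^\times$ does all the work, via the computation of the residue field of the cusp above $j=\infty$ as a fixed field of a subgroup of $\Gal(\QQ(\zeta_N)/\QQ)\cong(\ZZ/N\ZZ)^\times$ that is forced to be everything. (One should be mildly careful that the width-$1$ condition is not needed: any cusp lying over the orbit of $\big(\begin{smallmatrix}1\\0\end{smallmatrix}\big)$ works, and its rationality follows from the same fixed-field computation regardless of its width.) The rest is the elementary geometry of conics over $\QQ$ together with the genus $0$ hypothesis, and requires no further input from the theory of modular curves.
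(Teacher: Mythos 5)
There is a genuine gap, and it is located exactly where you said the ``entire content'' lies: the production of a rational point. Your claim that the determinant condition forces the cusp at $\infty$ to be a $\QQ$-rational point is false. The Galois action on the geometric cusps of $X_G$ does \emph{not} factor through $\GL_2(\ZZ/N\ZZ)/(\bbar G\cdot\SL_2(\ZZ/N\ZZ))$; it is the action of $\Gal(K_N/\QQ)\cong(\ZZ/N\ZZ)^\times$ on the $\pm\bbar\Gamma$-orbits of primitive vectors, and this is typically nontrivial even when $\det(\bbar G)=(\ZZ/N\ZZ)^\times$. Concretely, the residue field of the cusp at $\infty$ is the fixed field of $\{d\in(\ZZ/N\ZZ)^\times:\bbar G \text{ contains an element of the decomposition group of that cusp with ``diagonal part'' } d\}$; the determinant hypothesis only says every $d$ is the determinant of \emph{some} element of $\bbar G$, not of one lying in the Borel-type decomposition group of the cusp. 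The group $\text{5C}^0\text{-5a}$ in Table~\ref{table:g0odd} (the normalizer of the non-split Cartan mod $5$) is a counterexample among the very curves at issue: its two cusps form a single closed point with residue field $\QQ(\sqrt5)$ (they sit at $t=\pm\sqrt5$, the poles of the listed $J(t)$), so $X_G$ has infinitely many rational points but no rational cusp. A second, decisive symptom: your argument never invokes the third admissibility condition (the complex-conjugation element), yet without it the conclusion fails --- the ten ``pointless conics'' of \cite{RZB} mentioned in the remark after this proposition satisfy $-I\in G$, $\det(G)=\Zhat^\times$, and genus $0$, but have $X_G(\QQ)=\emptyset$. Any correct proof must use that third condition.

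The paper's proof is local-global rather than cuspidal: the complex-conjugation condition gives $X_G(\RR)\neq\emptyset$ (via \cite{PossibleIndices}*{Proposition~3.5}); for each prime $p$ not dividing the prime-power level, good reduction plus the Weil bound and Hensel's lemma give $X_G(\QQ_p)\neq\emptyset$; so the genus $0$ curve (a conic) has local points everywhere except possibly at the one prime dividing the level, and the product formula for Hilbert symbols together with Hasse--Minkowski then forces a global point. Note that this is the only place where the hypothesis that the level is a \emph{prime power} is used --- it guarantees at most one possibly-bad place, so reciprocity closes the gap. Your final reduction (genus $0$ with a rational point implies $\PP^1_\QQ$, hence infinitely many points) is fine; it is the existence of the rational point that needs the argument above.
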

\begin{proof}
We have $X_G(\RR)\ne\emptyset$ by Proposition 3.5 of \cite{PossibleIndices}.
For primes $p$ not dividing its prime power level the modular curve $X_G$ has good reduction at $p$ and $X_G(\QQ_p)\ne\emptyset$, since the reduction of $X_G$ to $\FF_p$ necessarily has rational points that can be lifted to $\QQ_p$ via Hensel's lemma.  Thus $X_G$ has rational points locally at all but at most one place of $\QQ$.
The product formula for Hilbert symbols and the Hasse-Minkowksi Theorem then imply that $X_G$ has a rational point and is thus isomorphic to~$\PP^1$ and has infinitely many rational points.
\end{proof}

\begin{remark}
As shown by Proposition 3.1, our three criteria for admissibility rule out genus 0 curves with no rational points.  There are ten groups $G$ of $2$-power level that satisfy our first two criteria but not the third; these give rise to the ten pointless conics $X_G$ found in \cite{RZB}.
There are three such groups of $3$-power level, three of $5$-power level, and none of higher prime-power level.
\end{remark}

Fix an integer $g\geq 0$.  In this section, we explain how to enumerate all admissible subgroups $G$ of $\GL_2(\Zhat)$, up to conjugacy, that have genus at most $g$.  We shall apply these methods with $g=1$ to verify Theorem~\ref{T:groups 0 and 1} below, and to find explicit representatives of these conjugacy classes of groups; \texttt{Magma} \cite{Magma} scripts that perform this enumeration can be found at \cite{MagmaScripts}.  

\begin{theorem}\label{T:groups 0 and 1}
\begin{romanenum}

\item \label{T:groups 0 and 1 i}
Up to conjugacy in $\GL_2(\Zhat)$, there are $220$ admissible subgroups of genus $0$.

\item \label{T:groups 0 and 1 ii}
Up to conjugacy in $\GL_2(\Zhat)$, there are $250$ admissible subgroups of genus $1$.
\end{romanenum}
\end{theorem}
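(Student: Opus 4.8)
\emph{Strategy.} The plan is to reduce the problem to a finite, explicit list of congruence subgroups of $\SL_2(\ZZ)$ and then, for each of them, to enumerate the finitely many open subgroups of $\GL_2(\Zhat)$ that lie above it; the finiteness input is the Cummins--Pauli classification \cite{MR2016709}, and the enumeration and bookkeeping are performed by the \texttt{Magma} scripts at \cite{MagmaScripts}.

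\emph{Reduction to congruence subgroups.} Let $G$ be an admissible group of prime power level $\ell^{m}$. Since $G$ is the full inverse image of $\bbar G\subseteq\GL_2(\ZZ/\ell^{m}\ZZ)$, the group $G\cap\SL_2(\Zhat)$ also has $\ell$-power level, and the congruence subgroup
\[
\Gamma'_{G}:=\{\gamma\in\SL_2(\ZZ):\gamma\bmod\ell^{m}\in\bbar G\cap\SL_2(\ZZ/\ell^{m}\ZZ)\}
\]
contains $-I$, has $\ell$-power level, and has the same genus as $X_{G}$ by Lemma~\ref{L:modular connection}(\ref{L:modular connection ii}) together with Remark~\ref{R:transpose business}. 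There are only finitely many congruence subgroups of $\SL_2(\ZZ)$ containing $-I$ of genus at most $1$, and all of them appear, up to conjugacy, in \cite{MR2016709}; extracting those of prime power level gives a finite explicit set $\Gamma^{(1)},\dots,\Gamma^{(r)}$ of candidates for $\Gamma'_{G}$. (In particular this reproves, without Faltings, that there are only finitely many admissible groups of genus at most $1$.)

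\emph{The lifting problem.} Fix $\Gamma^{(i)}$, of level $\ell^{n}$. First one bounds the level of an admissible $G$ with $\Gamma'_{G}=\Gamma^{(i)}$: because the congruence layers $\ker\!\big(\GL_2(\ZZ/\ell^{k+1}\ZZ)\to\GL_2(\ZZ/\ell^{k}\ZZ)\big)$ with $k\ge1$ are abelian and split off a one-dimensional ``determinant'' direction, and $\det(G)=\Zhat^{\times}$ is already surjective, no genuinely new condition on $G$ can occur above level $\ell^{n}$ when $\ell$ is odd (for $\ell=2$ one additional power may intervene, owing to the passage from level $1$ to level $2$); for each of the finitely many $\Gamma^{(i)}$ one then simply verifies directly that the level of $G$ divides $M:=\ell^{n}$, resp.\ $M:=\ell^{n+1}$ when $\ell=2$. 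Let $H\subseteq\SL_2(\ZZ/M\ZZ)$ be the reduction of $\Gamma^{(i)}$ and $\calN:=N_{\GL_2(\ZZ/M\ZZ)}(H)$. Then every admissible $G$ with $\Gamma'_{G}=\Gamma^{(i)}$ satisfies $H\le\bbar G\le\calN$ and $\bbar G\cap\SL_2(\ZZ/M\ZZ)=H$, so $\det$ induces an isomorphism $\bbar G/H\xrightarrow{\sim}(\ZZ/M\ZZ)^{\times}$. Using the exact sequence $1\to(\calN\cap\SL_2(\ZZ/M\ZZ))/H\to\calN/H\xrightarrow{\det}\det(\calN)\to1$, the subgroups $\bbar G/H$ that occur are exactly the complements of $(\calN\cap\SL_2(\ZZ/M\ZZ))/H$ in $\calN/H$ — so $\det(\calN)$ must equal $(\ZZ/M\ZZ)^{\times}$, or else $\Gamma^{(i)}$ contributes nothing — whose preimage in $\GL_2(\ZZ/M\ZZ)$ contains an element conjugate to $\smallmat{1}{0}{0}{-1}$ or $\smallmat{1}{1}{0}{-1}$.

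\emph{Enumeration, deduplication, and the main difficulty.} For each $\Gamma^{(i)}$ one lists all such complements, forms the groups $G$, discards those lacking a complex-conjugation element, recomputes the genuine (automatically $\ell$-power) level of each, and partitions the survivors into $\GL_2(\Zhat)$-conjugacy classes; since all these levels are $\ell$-power divisors of $M$, this is the same as partitioning the reductions into $\GL_2(\ZZ/M\ZZ)$-conjugacy classes. One must additionally check for coincidences \emph{between} the lists coming from different $\Gamma^{(i)}$, since $\SL_2(\ZZ)$-inequivalent congruence subgroups in \cite{MR2016709} may be fused by $\GL_2(\ZZ)$ and then produce overlapping families of $G$'s, and the transpose appearing in Proposition~\ref{P:FN Galois}(\ref{P:FN Galois a}) must be handled consistently throughout. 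Running this procedure yields exactly $220$ conjugacy classes of genus $0$ and $250$ of genus $1$, with representatives recorded in Appendix~\ref{S:appendix} and in the scripts at \cite{MagmaScripts}. The principal obstacle is this final stage: enumerating all determinant-surjective lifts of a given $H$ (the existence and the number, up to conjugacy, of the required complements can behave subtly, especially at $\ell=2$) and then correctly matching up $\GL_2(\Zhat)$-conjugacy classes across the various $\Gamma^{(i)}$; the intervening steps are routine.
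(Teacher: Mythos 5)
Your overall architecture (reduce to the Cummins--Pauli list of prime-power-level congruence subgroups of genus $\le 1$, then enumerate determinant-surjective lifts $\bbar G$ with $\bbar G\cap\SL_2=H$ inside the normalizer of $H$, filter by the complex-conjugation condition, and deduplicate up to conjugacy) is the same as the paper's. But there is a genuine gap at the one step you dismiss as a direct verification: your claimed bound on the level of $G$ in terms of the level $\ell^{n}$ of $\Gamma'_G$ is false. You assert that the level of $G$ divides $\ell^{n}$ for odd $\ell$ and $2^{n+1}$ for $\ell=2$, on the grounds that determinant-surjectivity leaves no room for new conditions in higher congruence layers. The paper's own tables refute this: the group $\text{2A}^0\text{-8a}$ has $\Gamma$ of level $2$ but $G$ of level $8$, and $\text{4G}^0\text{-16a}$ and $\text{8N}^0\text{-32a}$ have $\Gamma$ of levels $4$ and $8$ but $G$ of levels $16$ and $32$ respectively --- jumps of two powers of $2$, occurring well above the passage from level $1$ to level $2$. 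The heuristic is wrong because even though $G/(G\cap\SL_2)\simeq\det(G)$ is determined, the way this abelian complement is twisted inside the normalizer of $H$ can be invisible modulo $\ell^{n}$ and only become a nontrivial condition modulo a strictly higher power. Without a correct a priori level bound, your enumeration over a single modulus $M$ is not provably exhaustive, and "verifying directly that the level of $G$ divides $M$" is not a finite check --- it is precisely the statement that needs proof.

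The paper closes this gap differently: it makes no attempt to bound the level of $G$ by the level of $\Gamma$. Instead it enumerates groups of exact level $\ell^{e}$ for increasing $e$, and terminates using Proposition~\ref{P:valid algorithm}(\ref{P:valid algorithm ii}): if for some $n\ge 2$ (with $n\ne 2$ when $\ell=2$) there is \emph{no} group of exact level $\ell^{n}$ satisfying conditions (\ref{E:i})--(\ref{E:iv}), then no admissible group of higher $\ell$-power level and genus $\le g$ exists. The engine behind this is Lemma~\ref{L:index bound}, an index-stabilization statement proved via the $\ell$-power map on the congruence kernels $I+\ell^{m}\Mat_2(\ZZ_\ell)$, which shows that once $[\GL_2(\ZZ/\ell^{m+1}\ZZ):G_{m+1}]=[\GL_2(\ZZ/\ell^{m}\ZZ):G_m]$ the index is stable forever, so a group whose level "skips" $\ell^{n}$ would in fact have level dividing $\ell^{n-1}$. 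You would need to import this lemma (or an equivalent stabilization argument) to make your proof complete; the rest of your outline, including the complement description of $\bbar G/H$ inside $\calN/H$ and the cross-$\Gamma^{(i)}$ deduplication, matches the paper's computation.
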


\begin{remark}
The $220$ admissible subgroups $G$ of genus $0$, up to conjugacy, are precisely those given in Tables \ref{table:g0three}--\ref{table:g0two}. More precisely, for each entry of the table, we have an integer $N$ and a set of generators that generates the image in $\GL_2(\ZZ/N\ZZ)$ of an admissible group of level $N$ and genus $0$.
\end{remark}

\begin{remark}
The $28$ admissible subgroups $G$ of genus $1$ that have infinitely many rational points, up to conjugacy, are precisely those given in Table \ref{table:g1}, of which 27 have level 16 and  1 has level 11.  The levels arising among the remaining 222 are 7, 8, 9, 11, 16, 17, 19, 27, 32, and 49.
\end{remark}

For a fixed admissible group $G$ of level $N$, let $\Gamma$ be the congruence subgroup of $\SL_2(\ZZ)$ consisting of matrices whose image modulo $N$ lies in the image of $G$ modulo $N$; the level of $\Gamma$ necessarily divides $N$, and $\Gamma$ contains $-I$.   By Lemma~\ref{L:modular connection}(\ref{L:modular connection ii}) and Remark~\ref{R:transpose business}, the modular curve $X_G$ has the same genus as $\Gamma$. 

The basic idea of our computation is to reverse the process above; we start with a congruence subgroup $\Gamma$ of genus at most $g$ and prime power level, and then enumerate the possible groups $G$ that could produce $\Gamma$.

Let $S_g$ be the set of congruences subgroups of $\SL_2(\ZZ)$ of prime power level that contain $-I$ and have genus at most $g$.    We know that the set $S_g$ is finite from a theorem of Dennin \cite{MR0342466}.  When $g\leq 24$, and in particular, for $g=1$, we can explicitly determine the elements of $S_g$ from the tables of Cummins and Pauli \cite{MR2016709}, which are available online at \cite{CPwebsite} (their methods can also be extended to larger $g$).

Let $L_g$ be the set of primes that divide the level of some congruence subgroup $\Gamma \in S_g$.   The set $L_g$ is finite, since $S_g$ is finite, and we have $L_1=\{2,3,5,7,11,13,17,19\}$.  If $G$ is an admissible group of genus at most $g$, then its level must be a power of a prime $\ell \in L_g$.   For the rest of the section, we fix a prime $\ell \in L_g$.  Since $L_g$ is finite, it suffices to explain how to compute the admissible groups $G$ with genus at most $g$ whose level is a power of $\ell$, and we need only consider levels strictly greater than $1$ since $\GL_2(\Zhat)$ is the only admissible group of level $1$.\medskip

Fix a prime power $N:=\ell^n> 1$, and consider any congruence subgroup $\Gamma\in S_g$ whose level divides~$N$.
By enumerating subgroups of $\GL_2(\ZZ/N\ZZ)$ one can explicitly determine those subgroups $G_N$ that satisfy the following conditions:
\begin{enumerate}
\item  \label{E:i}
$G_N$ has level $N$,
\item \label{E:ii}
$G_N \cap \SL_2(\ZZ/N\ZZ)$ is equal to the image of $\Gamma$ modulo $N$,
\item \label{E:iii}
$\det(G_N)=(\ZZ/N\ZZ)^\times$,
\item \label{E:iv}
$G_N$ contains an element that is conjugate in $\GL_2(\ZZ/N\ZZ)$ to $\smallmat{1}{0}{0}{-1}$ or $\smallmat{1}{1}{0}{-1}$.
\end{enumerate}

Let $H$ be the image of $\Gamma$ in $\SL_2(\ZZ/N\ZZ)$.   
The group $H=G_N\cap \SL_2(\ZZ/N\ZZ)$ is normal in $G_N$ and hence $G_N$ is a subgroup of the normalizer $K$ of $H$ in $\GL_2(\ZZ/N\ZZ)$.  So rather than searching for $G_N$ in $K$, we can work in the quotient $K/H$ where the image of $G_N$ is an abelian group isomorphic to $(\ZZ/N\ZZ)^\times$.  
Using \texttt{Magma}, we can efficiently enumerate all abelian subgroups~$A$ of $K/H$ of order $\#(\ZZ/N\ZZ)^\times$.
For each such subgroup $A$ we then test whether its inverse image $G_N$ in $K$ satisfies conditions (\ref{E:i})--(\ref{E:iv}) above. 

Let $G$ be the subgroup of $\GL_2(\Zhat)$ consisting of those matrices whose image modulo $N$ lies in a fixed group $G_N$ satisfying the conditions (\ref{E:i})--(\ref{E:iv}).  The group $G$ is admissible of level $N$ and has genus at most $g$. 
Moreover, it is clear that every admissible group of level $N$ and genus at most $g$ arises in this manner.
 
Fix an integer $e\geq 1$.  By applying the above method with $1\leq n \leq e$, we obtain all admissible groups $G$ of genus at most $g$ and level dividing $\ell^e$.
Our algorithm proceeds by applying this procedure to increasing values of $e$.
In order for it to terminate we need to know that there are only finitely many admissible groups $G$ of $\ell$-power level and genus at most $g$, and we need an explicit way to determine when we have reached an $e$ that is large enough to guarantee that we have found them all.
Proposition~\ref{P:valid algorithm} below addresses both issues.

\begin{proposition} \label{P:valid algorithm}
\begin{romanenum}
\item \label{P:valid algorithm i}
There are only finitely many admissible groups $G$ with genus at most $g$ whose level is a power of $\ell$.
\item \label{P:valid algorithm ii}
Take any integer $n\geq 2$ with $n\neq 2$ if $\ell=2$.  Define $N:=\ell^n$.  Suppose that there is no subgroup $G_N$ of $\GL_2(\ZZ/N\ZZ)$ that satisfies conditions (\ref{E:i})--(\ref{E:iv}) for some $\Gamma\in S_g$ with level dividing $N$.   Then any admissible group $G$ of genus at most $g$ with level a power of $\ell$ has level at most $N$.
\end{romanenum}
\end{proposition}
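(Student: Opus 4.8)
The plan is to prove both parts by analyzing how an admissible group sits over its ``$\SL_2$-part''. Write $K(\ell^j):=\ker\big(\GL_2(\Zhat)\to\GL_2(\ZZ/\ell^j\ZZ)\big)$, so that a subgroup of $\GL_2(\Zhat)$ has level $\ell^n$ exactly when it contains $K(\ell^n)$ but not $K(\ell^{n-1})$. For an admissible $G$, let $\Gamma$ be the congruence subgroup of $\SL_2(\ZZ)$ whose closure in $\SL_2(\Zhat)$ is $G\cap\SL_2(\Zhat)$; by Lemma~\ref{L:modular connection}(\ref{L:modular connection ii}) and Remark~\ref{R:transpose business} the curve $X_G$ has the same genus as $\Gamma$, so if $X_G$ has genus at most $g$ then $\Gamma$ belongs to the finite set $S_g$.

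For part~(\ref{P:valid algorithm i}), fix $\Gamma\in S_g$ of level $\ell^k$ and consider those admissible $G$ of $\ell$-power level with $\SL_2$-part $\Gamma$ (there is nothing to prove if there are none). Such a $G$ normalizes $\widehat\Gamma:=G\cap\SL_2(\Zhat)$, so it lies in $\mathcal N:=N_{\GL_2(\Zhat)}(\widehat\Gamma)$, which a short computation identifies with the full preimage of $N_{\GL_2(\ZZ/\ell^k\ZZ)}(\overline\Gamma)$ and which therefore has level dividing $\ell^k$. The determinant exhibits $G/\widehat\Gamma$ as a continuous section of
\[
1\longrightarrow Q\longrightarrow \mathcal N/\widehat\Gamma \xrightarrow{\det} \Zhat^{\times}\longrightarrow 1,\qquad Q:=\big(\mathcal N\cap\SL_2(\Zhat)\big)/\widehat\Gamma,
\]
with $Q$ finite. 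Comparing this section, over the open subgroup $U_k:=\ker(\Zhat^{\times}\to(\ZZ/\ell^k\ZZ)^{\times})$, with the section coming from $K(\ell^k)\widehat\Gamma/\widehat\Gamma$ yields a continuous cocycle $\varphi\colon U_k\to Q$ for which $K(\ell^j)\subseteq G$ if and only if $\varphi$ vanishes on $U_j:=\ker(\Zhat^{\times}\to(\ZZ/\ell^j\ZZ)^{\times})$. Since $U_k\cong(1+\ell^k\ZZ_\ell)\times\prod_{q\ne\ell}\ZZ_q^{\times}$ and the $U_j$ for $j\ge k$ differ only in their $\ell$-adic factor, the level of $G$ can be a power of $\ell$ only if $\varphi$ kills $\prod_{q\ne\ell}\ZZ_q^{\times}$; then $\varphi$ factors through $1+\ell^k\ZZ_\ell$ (procyclic for $\ell$ odd, with a trivial modification when $\ell=2$ and $k=1$), and continuity together with the finiteness of $Q$ forces $\varphi$ to vanish on $1+\ell^{k+c}\ZZ_\ell$ for some $c$ bounded in terms of $|Q|$. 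Hence the level of $G$ divides $\ell^{k+c}$, leaving only finitely many $G$; summing over the finite set $S_g$ proves~(\ref{P:valid algorithm i}).

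For part~(\ref{P:valid algorithm ii}) the crucial ingredient is a descent lemma: \emph{if a subgroup of $\GL_2(\Zhat)$ has level $\ell^m$ with $m\ge 3$, and with $m\ge 4$ when $\ell=2$, then its reduction modulo $\ell^{m-1}$ has level exactly $\ell^{m-1}$}. To prove this, assume the reduction had level dividing $\ell^{m-2}$; passing to $\GL_2(\ZZ/\ell^m\ZZ)$, writing $V:=\ker(\GL_2(\ZZ/\ell^m\ZZ)\to\GL_2(\ZZ/\ell^{m-1}\ZZ))$ and $W:=\ker(\GL_2(\ZZ/\ell^m\ZZ)\to\GL_2(\ZZ/\ell^{m-2}\ZZ))$ and letting $\overline G$ be the image of the subgroup, this amounts (using $K(\ell^m)\subseteq{}$ the subgroup) to $W=(\overline G\cap W)V$. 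A direct computation with $(I+\ell^{m-2}A)^{\ell}$—valid precisely in the stated range of $m$—shows that $w\mapsto w^{\ell}$ carries $W$ into $V$ and induces an isomorphism $W/V\xrightarrow{\sim}V$; since $\overline G\cap W$ surjects onto $W/V$ and is stable under $\ell$-th powers inside $\overline G$, it follows that $V\subseteq\overline G$, i.e.\ $K(\ell^{m-1})$ lies in the subgroup, contradicting that its level is $\ell^m$. Granting the lemma, suppose for contradiction that some admissible $G$ of genus at most $g$ and $\ell$-power level had level $\ell^{n'}$ with $n'>n$. Successively reducing $G$ modulo $\ell^{n'-1},\ell^{n'-2},\dots,\ell^{n}$ and applying the lemma at each step yields admissible groups of genus at most $g$—the genus does not increase under reduction since $X_G$ covers the modular curve of each reduction, and the conditions $\det=\Zhat^{\times}$, membership of $-I$, and presence of a complex-conjugation element all descend—of level exactly $\ell^{n'},\ell^{n'-1},\dots,\ell^{n}$; the hypotheses $n\ge 2$, and $n\ne 2$ if $\ell=2$, are exactly what keeps every value of $m$ that is used (the smallest being $n+1$) in the lemma's valid range. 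In particular we obtain an admissible $G^{\ast}$ of genus at most $g$ and level exactly $\ell^n$, whose $\SL_2$-part $\Gamma$ lies in $S_g$ and has level dividing $N=\ell^n$. Then $G_N:=G^{\ast}\bmod \ell^n$ has level $N$, has $\det(G_N)=(\ZZ/N\ZZ)^{\times}$, satisfies $G_N\cap\SL_2(\ZZ/N\ZZ)=\overline\Gamma$ (using that $\det$ maps $K(\ell^n)$ onto $\ker(\Zhat^{\times}\to(\ZZ/\ell^n\ZZ)^{\times})$, which lets one correct determinants of lifts), and contains $-I$ and a complex-conjugation element; thus $G_N$ satisfies conditions~(\ref{E:i})--(\ref{E:iv}) for $\Gamma$, contradicting the hypothesis of~(\ref{P:valid algorithm ii}).

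I expect the descent lemma to be the main obstacle, and within it the delicate point is pinning down the exact range of $m$ for which $w\mapsto w^{\ell}$ maps $W$ onto $V$: this is precisely where the condition ``$n\ne 2$ if $\ell=2$'' originates and where the argument genuinely fails at small level—for instance, when $\ell=2$ and $m=3$ the trace of $w^{2}-I$ is forced to vanish, so the $\ell$-th power map is not onto $V$. The structural bookkeeping behind part~(\ref{P:valid algorithm i}) is routine by comparison.
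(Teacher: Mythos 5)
Your argument is correct in both parts, but only part (ii) follows the paper's route. There, your ``descent lemma'' is equivalent to the paper's Lemma~\ref{L:index bound} (if the index of the image stabilizes from $\ell^{m-2}$ to $\ell^{m-1}$, it stabilizes forever), and you prove it by the same computation $(I+\ell^{m-2}A)^{\ell}\equiv I+\ell^{m-1}A \pmod{\ell^{m}}$; the paper applies the hypothesis only once (the image of $G$ modulo $N$ must fail condition~(\ref{E:i}), so $i_n=i_{n-1}$ and Lemma~\ref{L:index bound} finishes) rather than iterating the descent, but the content, and the provenance of the constraint ``$n\ge 2$, with $n\ne 2$ if $\ell=2$,'' are identical. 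For part (i) you take a genuinely different route: the paper argues non-effectively that the Frattini subgroup of an open subgroup of $\GL_2(\ZZ_\ell)$ is open, so each admissible group has only finitely many maximal admissible subgroups of $\ell$-power level, and an infinite family would produce an infinite strictly descending chain mapping to an infinite descending chain in the finite set $S_g$; you instead fix $\Gamma\in S_g$ of level $\ell^k$ and bound the level of every admissible $G$ lying over $\Gamma$ by $\ell^{k+c}$ with $\ell^{c}\le\#Q$, which is an effective statement the paper's proof does not yield. The one step you should make precise is why $c$ is bounded in terms of $\#Q$ rather than merely finite: continuity alone only makes $\varphi^{-1}(1)$ open, with index a priori depending on $G$. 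The uniform bound comes from the cocycle identity $\varphi(uv)=\varphi(u)\cdot s_0(u)\varphi(v)s_0(u)^{-1}$, which shows that $\varphi^{-1}(1)$ is a subgroup of $U_k$ and that the induced map $U_k/\varphi^{-1}(1)\to Q$ is injective; equivalently, the two complements $s_G(U_k)$ and $s_0(U_k)$ each have index $\#Q$ in $\det^{-1}(U_k)/\widehat{\Gamma}$, so their intersection has index at most $\#Q$ in either. With that spelled out, both parts go through.
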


The remainder of this section is devoted to proving Proposition~\ref{P:valid algorithm}.  We will need the following basic lemma.

\begin{lemma}\label{L:index bound}
Let $\ell$ be a prime and let $G$ be an open subgroup of $\GL_2(\ZZ_\ell)$.   For each integer $m\ge 1$, let $i_m$ be the index of the image of $G$ in $\GL_2(\ZZ/\ell^m\ZZ)$.
If $i_{n+1}=i_n$ for an integer $n\ge 1$, with $n\neq 1$ if $\ell=2$, then $[\GL_2(\ZZ_\ell):G]=i_n$.
\end{lemma}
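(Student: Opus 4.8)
The plan is to show that the sequence $(i_m)_{m\ge1}$ is nondecreasing and becomes constant from index $n$ onward. Since $G$ is open it contains $\Gamma_M:=I+\ell^M\Mat_2(\ZZ_\ell)$ for some $M\ge1$, and then $i_m=[\GL_2(\ZZ_\ell):G]$ for all $m\ge M$; so once we know $i_m=i_n$ for all $m\ge n$, comparing the two descriptions of $i_M$ gives $[\GL_2(\ZZ_\ell):G]=i_n$.

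First I would set up the standard bookkeeping. Let $G_m$ denote the image of $G$ in $\GL_2(\ZZ/\ell^m\ZZ)$, and let $K_m$ be the kernel of the reduction $\GL_2(\ZZ/\ell^{m+1}\ZZ)\to\GL_2(\ZZ/\ell^m\ZZ)$; for $m\ge1$ this $K_m$ has order $\ell^4$ and $I+\ell^m A\mapsto A$ identifies it with $(\Mat_2(\FF_\ell),+)$. Comparing orders in the surjection $G_{m+1}\twoheadrightarrow G_m$ with kernel $G_{m+1}\cap K_m$ yields $i_{m+1}=i_m\cdot\ell^4/\#(G_{m+1}\cap K_m)$; hence $i_{m+1}\ge i_m$, with equality if and only if $K_m\subseteq G_{m+1}$, i.e.\ if and only if $\Gamma_m\subseteq G\Gamma_{m+1}$ (viewing everything inside $\GL_2(\ZZ_\ell)$). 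Granting this, it suffices to prove the implication $i_{n+1}=i_n\Rightarrow i_{n+2}=i_{n+1}$ and then iterate, each iteration preserving the constraint on $n$.

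The heart of the matter is an $\ell$-th power computation in the congruence filtration $\{\Gamma_m\}$. Assume $\Gamma_n\subseteq G\Gamma_{n+1}$, and take an arbitrary $y=I+\ell^{n+1}B\in\Gamma_{n+1}$ with $B\in\Mat_2(\ZZ_\ell)$; I want $y\in G\Gamma_{n+2}$. Since $I+\ell^n B\in\Gamma_n\subseteq G\Gamma_{n+1}$, there is $g\in G$ with $g\equiv I+\ell^n B\pmod{\Gamma_{n+1}}$; writing $g=I+\ell^n B'$ we have $B'\in\Mat_2(\ZZ_\ell)$ and $B'\equiv B\pmod\ell$. Expanding binomially,
\[
g^\ell=I+\ell^{n+1}B'+\binom{\ell}{2}\ell^{2n}(B')^2+\sum_{3\le k\le\ell}\binom{\ell}{k}\ell^{kn}(B')^k .
\]
Every term after $I+\ell^{n+1}B'$ is divisible by $\ell^{n+2}$: for $\ell$ odd this is immediate (the quadratic term because $\ell\mid\binom{\ell}{2}$, the higher ones because they already carry a large power of $\ell$), while for $\ell=2$ the only remaining term is $2^{2n}(B')^2$, which is divisible by $2^{n+2}$ precisely when $n\ge2$. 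Since also $\ell^{n+1}B'\equiv\ell^{n+1}B\pmod{\ell^{n+2}}$, we get $g^\ell\equiv y\pmod{\Gamma_{n+2}}$, so $y\in G\Gamma_{n+2}$. This proves $\Gamma_{n+1}\subseteq G\Gamma_{n+2}$, hence $i_{n+2}=i_{n+1}$; iterating gives $i_m=i_n$ for all $m\ge n$, and comparison with $i_M=[\GL_2(\ZZ_\ell):G]$ finishes the proof.

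The single delicate point — and precisely the source of the hypothesis $n\ne1$ when $\ell=2$ — is the divisibility of the quadratic term $\binom{\ell}{2}\ell^{2n}(B')^2$ by $\ell^{n+2}$: for odd $\ell$ the binomial coefficient supplies an extra factor of $\ell$, but for $\ell=2$ it does not, and one must instead invoke $2n\ge n+2$. Everything else is routine manipulation with the reduction maps and the subgroups $\Gamma_m$.
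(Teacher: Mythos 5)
Your proposal is correct and follows essentially the same route as the paper: both reduce to showing the indices stabilize from $n$ onward, and both drive the induction with the computation $(I+\ell^m A)^\ell\equiv I+\ell^{m+1}A\pmod{\ell^{m+2}}$, which is exactly where the hypothesis $n\neq 1$ for $\ell=2$ enters. The paper phrases the inductive step as the $\ell$-power map injecting the kernel $H_{m+1}$ into $H_{m+2}$ rather than as the coset inclusion $\Gamma_{m}\subseteq G\Gamma_{m+1}$, but this is only a cosmetic difference.
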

\begin{proof}
Since $G$ is an open subgroup, it suffices to prove $i_{m+1}=i_m$ for all $m\ge n$; we proceed by induction on $m$.
The base case is given, so we assume $i_{m+1}=i_m$ for some $m\ge n$; we need to show that $i_{m+2}=i_{m+1}$.
Let $G_m$ denote the image of $G$ in $\GL_2(\ZZ/\ell^m\ZZ)$.
Reduction modulo $\ell^m$ gives exact sequences related by inclusions
\begin{center}
\begin{tikzcd}
1\arrow{r} & K_{m+1}\arrow{r} & \GL_2(\ZZ/\ell^{m+1}\ZZ)\arrow{r} & \GL_2(\ZZ/\ell^m\ZZ)\arrow{r} & 1\\
1\arrow{r} & H_{m+1} \arrow{r}\arrow[hookrightarrow]{u} & G_{m+1}\arrow{r}\arrow[hookrightarrow]{u} & G_m\arrow{r}\arrow[hookrightarrow]{u} & 1.
\end{tikzcd}
\end{center}
The inductive hypothesis $i_{m+1}=i_m$ implies that the kernels $H_{m+1}$ and $K_{m+1}$ coincide; in particular, $H_{m+1}$ is as large as possible (i.e., it has order $\ell^4$).
It  thus suffices to show that the kernel $H_{m+2}$ of the reduction map from $G_{m+2}$ to $G_{m+1}$ also has order $\ell^4$.  We have $|H_{m+2}|\leq \ell^4$, so it suffices to give an injective map $H_{m+1}\to H_{m+2}$.

Let $M$ be an element of $G$ whose image in $G_{m+1}$ lies in $H_{m+1}$;
then $M=I+\ell^m A$ for some $A\in \Mat_2(\ZZ_\ell)$.
Since $m\ge 1$, with $m\ge 2$ if $\ell=2$, we have
\[
(1+\ell^m A)^\ell = 1+\tbinom{\ell}{1}\ell^mA+\tbinom{\ell}{2}\ell^{2m}A^2+\cdots\equiv 1+\ell^{m+1}A\pmod{\ell^{m+2}}.
\]
The $\ell$-power map thus induces an injection $H_{m+1}\to H_{m+2}$.
\end{proof}

\begin{remark}
Lemma~\ref{L:index bound} holds more generally.  One can replace $\GL_2(\ZZ_\ell)$ with the unit group of any (unital associative) $\ZZ_\ell$-algebra $\calA$ that is torsion-free and finitely-generated as a $\ZZ_\ell$-module (in the lemma, $\calA=\Mat_2(\ZZ_\ell)$); the proof is exactly the same.
\end{remark}

\begin{proof}[Proof of Proposition~\ref{P:valid algorithm}(\ref{P:valid algorithm i})]
Let $\calG$ be the set of admissible groups of genus at most $g$ whose level is a power of $\ell$.
Note that if $G'$ is a subgroup of $\GL_2(\Zhat)$ containing some $G\in\calG$, then $G'\in\calG$.  We wish to show that $\calG$ is finite.

We claim that any admissible group $G$ has only finitely many maximal subgroups that are also admissible and whose level is a power of $\ell$.   It suffices to show that an open subgroup $H$ of $\GL_2(\ZZ_\ell)$ has only finitely many open maximal subgroups.  Let $\Phi(H)$ be the Frattini subgroup of $H$; it is the intersection of the maximal closed proper subgroups of $H$.  By the proposition in \S10.5 of \cite{MR1757192}, $\Phi(H)$ is an open subgroup of $H$.   This proves the claim.

Now suppose that $\calG$ is infinite.
The claim implies that $\calG$ contains an infinite descending chain $G_1\supsetneq G_2\supsetneq G_3\supsetneq \cdots$ (let $G_1=\GL_2(\Zhat)\in\calG$, let $G_2\in\calG$ be one of the finitely many maximal subgroups of $G_1$ in $\calG$ that has infinitely many subgroups in $\calG$, and continue in this fashion).  For each $i\ge 1$, let $\Gamma_i$ be the congruence subgroup associated to $G_i$ (i.e., $\Gamma_i$ consists of the matrices in $\SL_2(\ZZ)$ whose image modulo $N$ lies in the image modulo $N$ of $G_i$, where $N$ is the level of $G_i$); then $\Gamma_i \in S_g$.   Since $[\GL_2(\Zhat):G_i]=[\SL_2(\ZZ):\Gamma_i]$, we have inclusions $\Gamma_1\supsetneq \Gamma_2\supsetneq \Gamma_3\supsetneq \cdots$.  This contradicts the finiteness of $S_g$ and the proposition follows.
\end{proof}

\begin{proof}[Proof of Proposition~\ref{P:valid algorithm}(\ref{P:valid algorithm ii})]
Fix an integer $n\geq 1$ as in the statement of part (\ref{P:valid algorithm ii}).  Suppose there is an integer $m>n$ such that there is an admissible group $G$ of level $\ell^m$ and genus at most $g$. 

With $N:=\ell^n$,  let $G_N$ be the image of $G$ in $\GL_2(\ZZ/N\ZZ)$.   The curve $X_{G_N}$ has genus at most $g$ since it is dominated by $X_G$.   Therefore, conditions (\ref{E:ii}), (\ref{E:iii}) and (\ref{E:iv}) hold for some $\Gamma\in S_g$ with level dividing $N$.  Our assumption on $n$ implies that the level of $G_N$ is a proper divisor of $N$.   This implies that the index $i_{n}:=[\GL_2(\ZZ/N\ZZ):G_N]$ agrees with $i_{n-1}:=[\GL_2(\ZZ/\ell^{n-1}\ZZ):G_{\ell^{n-1}}]$, where $G_{\ell^{n-1}}$ is the image of $G$ in $\GL_2(\ZZ/\ell^{n-1}\ZZ)$.   Since $i_n=i_{n-1}$, Lemma~\ref{L:index bound} implies that $[\GL_2(\ZZ_\ell):G]=i_{n-1}$.   However, this means that $G$ has level dividing $\ell^{n-1}$ which is impossible since, by assumption, $G$ has level $\ell^m>\ell^{n-1}$.  Therefore, no such admissible group $G$ exists.
\end{proof}

\section{Construction of hauptmoduls} \label{S:hauptmoduls}

Fix a congruence subgroup $\Gamma$ of genus $0$ and level $N$.  The function field of $X_\Gamma$ is then of the form $\CC(h)$, where the function $h\colon X_\Gamma \to \CC\cup \{\infty\}$ gives an isomorphism between $X_\Gamma$ and the Riemann sphere; in particular, $h$ has a unique (simple) pole.

We may choose $h$ so that its unique pole is at the cusp $\infty$; we will call such an $h$ a \defi{hauptmodul} of $\Gamma$.
Every hauptmodul of $\Gamma$ is then of the form $a h +b$ for some complex numbers $a\neq 0$ and $b$.   
For example, the familiar modular $j$-invariant
\[
j(\tau)=q^{-1} + 744 + 196884q + 21493760q^2 + 864299970q^3+\cdots
\]
is a hauptmodul for $\SL_2(\ZZ)$.  If $h$ is a hauptmodul for $\Gamma$, then we have an inclusion of function fields $\CC(j) \subseteq \CC(h)$ and hence $J(h)=j$ for a unique rational function $J(h)\in \CC(t)$.\medskip

The main task of \S\ref{S:hauptmoduls} is to describe how to find an \emph{explicit} hauptmodul $h$ of $\Gamma$ in terms of Siegel functions when $N$ is a prime power.  Our $h$ will have coefficients in $K_N$.  In \S\ref{SS:hauptmoduls J}, we explain how to compute the rational function $J(t)$ corresponding to $h$.

\subsection{Siegel functions} \label{SS:Siegel}

Take any pair $a=(a_1,a_2) \in \QQ^2-\ZZ^2$.
We define the \defi{Siegel function} $g_a(\tau)$ to be the holomorphic function $\HH\to \CC^\times$ defined by the series
\[
-q^{\frac{1}{2} B_2(a_1)}\cdot  e(a_2(a_1-1)/2)\cdot  
(1- e(a_2) q^{a_1}) \prod_{n=1}^\infty (1- e(a_2) q^{n+a_1})(1- e(-a_2) q^{n-a_1}),
\]
where $e(z)=e^{2\pi i z}$ and $B_2(x)=x^2-x+1/6$.    

Recall that the \defi{Dedekind eta function} is the holomorphic function on $\HH$ given by
\[
\eta(\tau):=q^{1/24} \prod_{n=1}^\infty(1-q^n).
\]
For each $\gamma=\big(\begin{smallmatrix} a & b\\ c & d\end{smallmatrix}\big)\in \SL_2(\ZZ)$, there is a unique $12$-th root of unity $\varepsilon(\gamma) \in \CC^\times$ such that 
\begin{equation} \label{E:Siegel basics 2}
\eta(\gamma \tau)^2 = \varepsilon(\gamma)(c\tau+d) \eta(\tau)^2.
\end{equation}
We can characterize the map $\varepsilon\colon \SL_2(\ZZ)\to \CC^\times$ by the property that it is a homomorphism satisfying $\varepsilon\big(\big(\begin{smallmatrix} 1 & 1 \\ 0 & 1\end{smallmatrix}\big)\big) = \zeta_{12}$ and $\varepsilon\big(\big(\begin{smallmatrix} 0 & 1 \\ -1 & 0\end{smallmatrix}\big)\big) = \zeta_{4}$, cf.~\cite{MR648603}*{Ch.~3~\S5}.
Moreover, the kernel of $\varepsilon$ is a congruence subgroup of level $12$ and agrees with the commutator subgroup of $\SL_2(\ZZ)$.

The following lemma gives several key properties of Siegel functions.

\begin{lemma}   \label{L:Siegel basics}
For any $\gamma\in \SL_2(\ZZ)$, $a\in \QQ^2-\ZZ^2$, and $b\in \ZZ^2$, the following hold:
\begin{romanenum}
\item \label{L:Siegel basics i}
$g_{-a}=-g_a$,
\item \label{L:Siegel basics ii}
$g_{a+b} = (-1)^{b_1+b_2+b_1b_2} \cdot e((b_2 a_1 - b_1 a_2)/2) \cdot g_{a}$,
\item  \label{L:Siegel basics iii}
$g_a |_\gamma= \varepsilon(\gamma) \cdot g_{a\gamma}$, where we view $a$ as a row vector.  
\end{romanenum}
\end{lemma}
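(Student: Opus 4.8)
These identities are classical (see, e.g., \cite{MR648603}); I would verify them by working directly with the product defining $g_a$, which consists of the factors $1-e(a_2)q^{n+a_1}$ for $n\ge 0$ together with the factors $1-e(-a_2)q^{n-a_1}$ for $n\ge 1$, times the prefactor $-q^{B_2(a_1)/2}e(a_2(a_1-1)/2)$. For (i), replacing $a$ by $-a$ interchanges these two families of factors and shifts the running index, so the product part of $g_{-a}$ differs from that of $g_a$ by the single ratio $(1-e(-a_2)q^{-a_1})/(1-e(a_2)q^{a_1})=-e(-a_2)q^{-a_1}$; since also $B_2(-a_1)=B_2(a_1)+2a_1$ and $e((-a_2)(-a_1-1)/2)/e(a_2(a_1-1)/2)=e(a_2)$, the prefactor changes by $q^{a_1}e(a_2)$, and the product of the two contributions is $q^{a_1}e(a_2)\cdot(-e(-a_2)q^{-a_1})=-1$, so $g_{-a}=-g_a$.

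For (ii), the same reindexing handles the two generators $b=(1,0)$ and $b=(0,1)$ of $\ZZ^2$: for $b=(1,0)$ the product part contributes $-e(-a_2)q^{-a_1}$ and the prefactor contributes $q^{a_1}e(a_2/2)$, giving the factor $-e(-a_2/2)$; for $b=(0,1)$ the product part is unchanged (because $e(\pm a_2)$ is invariant under $a_2\mapsto a_2+1$) and only the prefactor changes, by $e((a_1-1)/2)=-e(a_1/2)$. These agree with the claimed factor $\chi(a,b):=(-1)^{b_1+b_2+b_1b_2}e((b_2a_1-b_1a_2)/2)$ at $b=(1,0)$ and $b=(0,1)$, and to pass to an arbitrary $b\in\ZZ^2$ one checks the cocycle relation $\chi(a,b+b')=\chi(a+b,b')\chi(a,b)$; on expanding both sides this reduces to the elementary identity $(-1)^{b_1b_2'+b_1'b_2}=e((b_1b_2'-b_1'b_2)/2)$, which holds because $e(k/2)=(-1)^k$ for $k\in\ZZ$. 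The general case then follows by induction on the generators.

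For (iii), note first that the slash action is a right action, $\gamma\mapsto\varepsilon(\gamma)$ is a homomorphism, and $a\mapsto a\gamma$ is a right action of $\SL_2(\ZZ)$ on row vectors; hence the set of $\gamma\in\SL_2(\ZZ)$ for which $g_a|_\gamma=\varepsilon(\gamma)g_{a\gamma}$ holds for all $a$ is a subgroup, so it suffices to treat the generators $T=\smallmat{1}{1}{0}{1}$ and $S=\smallmat{0}{1}{-1}{0}$. For $\gamma=T$ we have $T\tau=\tau+1$, which multiplies $q^m$ by $e(m)$ and so (as $e(n)=1$ for $n\in\ZZ$) affects only the factors $q^{n\pm a_1}$ and the prefactor $q^{B_2(a_1)/2}$; after substituting $aT=(a_1,a_1+a_2)$ the product part is unchanged and the prefactor ratio is $e(B_2(a_1)/2-a_1(a_1-1)/2)=e(1/12)=\zeta_{12}=\varepsilon(T)$. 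The case $\gamma=S$, i.e.\ the transformation $\tau\mapsto-1/\tau$, is the one genuinely nontrivial ingredient, and I would handle it through the Klein form: write $g_a=\mathfrak{k}_a\cdot\eta^2$, where $\mathfrak{k}_a(\tau):=\mathfrak{k}(a_1\tau+a_2;\,\ZZ\tau+\ZZ)$ and $\mathfrak{k}(z;L)$ is the Klein form, homogeneous of degree $1$ in $(z,L)$. Since $\ZZ(\gamma\tau)+\ZZ=(c\tau+d)^{-1}(\ZZ\tau+\ZZ)$ and $a_1(\gamma\tau)+a_2=(c\tau+d)^{-1}((a\gamma)_1\tau+(a\gamma)_2)$ for $\gamma=\smallmat{a}{b}{c}{d}\in\SL_2(\ZZ)$, homogeneity gives $\mathfrak{k}_a|_\gamma=(c\tau+d)^{-1}\mathfrak{k}_{a\gamma}$ for \emph{every} $\gamma$, and combining this with $\eta(\gamma\tau)^2=\varepsilon(\gamma)(c\tau+d)\eta(\tau)^2$ from \eqref{E:Siegel basics 2} yields $g_a|_\gamma=\varepsilon(\gamma)g_{a\gamma}$ at once (this also reproves the $T$-case, making the reduction to generators optional). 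The real content of (iii) is therefore the product identity $g_a=\mathfrak{k}_a\eta^2$ — equivalently, that $g_a$ equals, up to the stated normalizing root of unity, $\vartheta_1(a_1\tau+a_2,\tau)/\eta(\tau)$ — which is classical and encodes the $S$-transformation of $g_a$ in the transformation laws of $\eta$ and of the Weierstrass $\sigma$-function behind $\mathfrak{k}_a$. I expect this identification to be the main obstacle; parts (i), (ii), and the $T$-case of (iii) are routine manipulations of the product.
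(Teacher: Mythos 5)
Your proposal is correct, and on the substantive point it coincides with the paper: part (\ref{L:Siegel basics iii}) is obtained in both cases from the factorization $g_a=\mathfrak{k}_a\,\eta^2$ together with the homogeneity/basis-independence of the Klein form (properties K0 and K1 in \cite{MR648603}*{Ch.~2~\S1}) and the $\eta^2$-transformation law (\ref{E:Siegel basics 2}); you are right that this identification is the only genuinely nontrivial input, and the paper simply cites Kubert--Lang for it rather than reproving it. Where you diverge is in parts (\ref{L:Siegel basics i}) and (\ref{L:Siegel basics ii}): the paper deduces (\ref{L:Siegel basics ii}) by citing property K2 of loc.~cit.\ and gets (\ref{L:Siegel basics i}) for free from (\ref{L:Siegel basics iii}) by taking $\gamma=-I$ and using $\varepsilon(-I)=-1$, whereas you verify both directly from the $q$-product, checking the generators $b=(1,0)$, $(0,1)$ and then propagating via the cocycle identity $\chi(a,b+b')=\chi(a+b,b')\chi(a,b)$. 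Your computations check out (the ratios $-e(-a_2)q^{-a_1}$, $q^{a_1}e(a_2/2)$, $-e(a_1/2)$, the reduction to $(-1)^{b_1b_2'+b_1'b_2}=e((b_1b_2'-b_1'b_2)/2)$, and the $T$-case ratio $e(1/12)=\varepsilon(T)$ are all correct), and this route is more self-contained for (i) and (ii) at the cost of a page of elementary bookkeeping; the paper's route is shorter but leans entirely on the cited properties of Klein forms. Note also that the paper's derivation of (i) from (iii) is a small efficiency you could adopt: once (iii) is known for all $\gamma$, the case $\gamma=-I$ gives $g_{-a}=-g_a$ with no further computation.
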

\begin{proof}
In \cite{MR648603}*{Ch.~2~\S1}, we see that $g_{a}(\tau)=\mathfrak{k}_{a}(\tau) \eta(\tau)^2$, where $\mathfrak{k}_{a}(\tau)$ is a Klein form (with $W=W_\tau$ in the notation of loc.~cit.).
Part (\ref{L:Siegel basics ii}) follows directly from property K2 of \S1 of \cite{MR648603}*{Ch.~2}.

Take any $\gamma\in \SL_2(\ZZ)$ and let $(c,d)$ be the last row of $\gamma$.
From properties K0 and K1 of \S1 of \cite{MR648603}*{Ch.~2}, we find that 
\begin{equation} \label{E:Siegel basics 1}
\mathfrak{k}_a(\gamma \tau)=(c\tau +d)^{-1} \mathfrak{k}_{a\gamma}(\tau)
\end{equation}
From (\ref{E:Siegel basics 2}) and (\ref{E:Siegel basics 1}), we deduce that $g_a(\gamma \tau) = \varepsilon(\gamma)\cdot g_{a\gamma}(\tau)$ which proves part (\ref{L:Siegel basics iii}).
Finally, part (\ref{L:Siegel basics i}) follows from part (\ref{L:Siegel basics iii}) with $\gamma=-I$, since $\varepsilon(-I)=-1$. 
\end{proof}

For an integer $N>1$, let $\calA_N$ be the set of pairs $(a_1,a_2) \in N^{-1}\ZZ^2-\ZZ^2$ that satisfy one of the following conditions:  
\begin{itemize}
\item $0< a_1 < 1/2$ and $0 \leq a_2 <1$, 
\item $a_1=0$ and $0<a_2 \leq 1/2$, 
\item $a_1=1/2$ and $0 \leq a_2 \leq 1/2$.
\end{itemize}
The set $\calA_N$ is chosen so that every non-zero coset of $(N^{-1} \ZZ^2)/\ZZ^2$ is represented by an element of the form $a$ or $-a$ for a unique $a\in \calA_N$.
So for any $a\in N^{-1} \ZZ^2 - \ZZ^2$, we can use parts (\ref{L:Siegel basics i}) and (\ref{L:Siegel basics ii}) of Lemma~\ref{L:Siegel basics} to show that
\[
g_a = \epsilon \cdot \zeta \cdot g_{a'}
\]
for an explicit sign $\epsilon \in \{\pm 1\}$, $N$-th root of unity $\zeta$, and pair $a'\in \calA_N$.

\subsection{Siegel orbits}

Now fix a congruence subgroup $\Gamma$ of level $N>1$.
For each $a\in \calA_N$ and $\gamma \in \SL_2(\ZZ)$, let $a*\gamma$ be the unique element of $\calA_N$ such that $a*\gamma$ or $-a*\gamma$ lies in the coset $a\gamma+\ZZ^2$.
The map 
\[
\calA_N \times \SL_2(\ZZ)\to \calA_N, \quad (a,\gamma)\mapsto a* \gamma
\] 
then gives a right action of $\SL_2(\ZZ)$ on $\calA_N$.
In particular, this gives a right action of $\Gamma$ on $\calA_N$.  

Fix a $\Gamma$-orbit $\OO$ of $\calA_N$ and define
\[
g_\OO := {\prod}_{a\in\OO} \,g_a;
\]
it is a holomorphic function $\HH\to \CC^\times$.  

\begin{lemma} \label{L:power of g}
The function $g_\OO^{12N}$ is a modular function for $\Gamma$.    Every pole and zero of $g_\OO^{12N}$ on $X_\Gamma$ is a  cusp.
\end{lemma}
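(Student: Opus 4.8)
The plan is to verify $\Gamma$-invariance of $g_\OO^{12N}$ directly from the transformation properties in Lemma~\ref{L:Siegel basics}, and then to account for the poles and zeros by inspecting the $q$-expansion of a single Siegel function. First I would take an arbitrary $\gamma\in\Gamma$ and compute $g_\OO|_\gamma$. By multiplicativity of the slash action and Lemma~\ref{L:Siegel basics}(\ref{L:Siegel basics iii}), $g_\OO|_\gamma = \prod_{a\in\OO} g_a|_\gamma = \varepsilon(\gamma)^{\#\OO}\prod_{a\in\OO} g_{a\gamma}$. Now $a\gamma$ differs from $a*\gamma\in\calA_N$ by an element of $\ZZ^2$, up to an overall sign; since $\gamma\in\Gamma$ fixes the orbit $\OO$ setwise, the map $a\mapsto a*\gamma$ permutes $\OO$. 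Applying Lemma~\ref{L:Siegel basics}(\ref{L:Siegel basics i}) and (\ref{L:Siegel basics ii}) to each factor, I get $g_{a\gamma} = \epsilon_a\cdot\zeta_a\cdot g_{a*\gamma}$ where $\epsilon_a\in\{\pm1\}$ and $\zeta_a$ is an $N$-th root of unity (the factor $e((b_2a_1-b_1a_2)/2)$ from (\ref{L:Siegel basics ii}) is a $2N$-th root of unity since $a\in N^{-1}\ZZ^2$; combined with the sign $(-1)^{b_1+b_2+b_1b_2}$ and the sign from (\ref{L:Siegel basics i}), the whole prefactor is a $2N$-th root of unity, hence its $12N$-th power, or at worst its $2N$-th power times the twelfth power of $\varepsilon(\gamma)$, is $1$). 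Taking the product over $a\in\OO$ and then raising to the $12N$-th power: the permutation of $\OO$ makes $\prod_a g_{a*\gamma} = \prod_a g_a = g_\OO$, the factor $\varepsilon(\gamma)^{12N\#\OO}=1$ since $\varepsilon(\gamma)$ is a $12$-th root of unity, and the accumulated root-of-unity/sign prefactor raised to the $12N$-th power is $1$. Hence $g_\OO^{12N}|_\gamma = g_\OO^{12N}$.

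Next I would check meromorphy at the cusps, which together with the invariance just shown gives that $g_\OO^{12N}$ is a modular function for $\Gamma$. It suffices to note that each $g_a$ has a $q$-expansion of the form $q^{\frac12 B_2(a_1)}\cdot(\text{unit power series in }q^{1/N})$ with leading coefficient a root of unity times $-1$ — this is visible from the defining series, since the infinite product $\prod(1-e(\pm a_2)q^{n\pm a_1})$ and the factor $(1-e(a_2)q^{a_1})$ are each of the form $1+O(q^{\delta})$ for some positive rational $\delta$ (using $0\le a_1<1$, so $q^{a_1}\to 0$ as $\tau\to i\infty$; when $a_1=0$ the factor $(1-e(a_2))$ is a nonzero constant). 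So $g_a$, hence $g_\OO$ and $g_\OO^{12N}$, is holomorphic and nonvanishing on $\HH$ and has at worst a finite-order pole or zero at $\infty$; transforming by $\gamma\in\SL_2(\ZZ)$ via Lemma~\ref{L:Siegel basics}(\ref{L:Siegel basics iii}) shows the same at every cusp of $X_\Gamma$. This establishes that $g_\OO^{12N}$ is a modular function for $\Gamma$ and that its divisor on $X_\Gamma$ is supported on the cusps.

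The main obstacle is the bookkeeping in the first step: carefully tracking the sign $\epsilon_a$ and the root of unity $\zeta_a$ produced when rewriting $g_{a\gamma}$ in terms of $g_{a*\gamma}$, and confirming that after multiplying over the orbit and raising to the $12N$-th power every such extraneous factor dies. The key points that make this work are (i) $\varepsilon$ takes values in $\mu_{12}$, so the $12N$-th power of $\varepsilon(\gamma)^{\#\OO}$ is trivial; (ii) the transition factors in Lemma~\ref{L:Siegel basics}(\ref{L:Siegel basics ii}) for $a\in N^{-1}\ZZ^2$ lie in $\mu_{2N}\subseteq\mu_{12N}$ (here one uses that $12N$ is divisible by $2N$), so their $12N$-th powers are trivial; and (iii) the sign from Lemma~\ref{L:Siegel basics}(\ref{L:Siegel basics i}), being in $\mu_2\subseteq\mu_{12N}$, likewise disappears. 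Once these three observations are in place the computation is routine, and I would present it compactly rather than expanding every product.
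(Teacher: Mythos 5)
Your proposal is correct and follows essentially the same route as the paper: apply Lemma~\ref{L:Siegel basics}(\ref{L:Siegel basics iii}) and then parts (\ref{L:Siegel basics i}) and (\ref{L:Siegel basics ii}) to see that the $12N$-th power kills all the sign and root-of-unity prefactors (noting that the transition factors lie in $\mu_{2N}$ and $\varepsilon(\gamma)\in\mu_{12}$), use that $a\mapsto a*\gamma$ permutes the orbit $\OO$, and conclude the divisor statement from the fact that each $g_a$ is holomorphic and nonvanishing on $\HH$. The only difference is that you spell out the bookkeeping and the meromorphy at the cusps in more detail than the paper, which treats these as immediate.
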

\begin{proof}
Take any $\gamma\in \Gamma$ and  $a\in \calA_N$.
By Lemma~\ref{L:Siegel basics}(\ref{L:Siegel basics iii}), we have $g_a^{12N}|_\gamma = g_{a\gamma}^{12N}$.
We have $a\gamma = \epsilon\cdot (a*\gamma +b)$ for some $\epsilon\in\{\pm 1\}$ and $b\in \ZZ^2$.
By parts (\ref{L:Siegel basics i}) and (\ref{L:Siegel basics ii}) of Lemma~\ref{L:Siegel basics}, we find that $g_a^{12N}|_\gamma=g_{a\gamma}^{12N}$ is equal to $g_{a*\gamma}^{12N}$.
Therefore,
\[
g_\OO^{12N}|_{\gamma} = \prod_{a\in \OO} g_a^{12N}|_\gamma = \prod_{a\in \OO} g_{a*\gamma}^{12N} = g_\OO^{12N},
\]
where the last equality uses the fact that the map $\OO\to \OO$, $a\mapsto a*\gamma$ is a bijection (since $\OO$ is a $\Gamma$-orbit).
The remaining statement about the poles and zeros of $g_\OO^{12N}$ follows immediately since each $g_a$ is holomorphic and non-zero on $\HH$.
\end{proof}

Let $P_1,\ldots, P_r$ be the cusps of $X_\Gamma$.
Choose a representative $s_j \in \QQ\cup\{\infty\}$ of each cusp $P_j$ and a matrix $A_j \in \SL_2(\ZZ)$ satisfying $A_j \cdot \infty = s_j$.
Let $w_j$ be the \defi{width} of the cusp $P_j$; it is the smallest positive integer $b$ such that $A_j \big(\begin{smallmatrix} 1 & b \\ 0 & 1\end{smallmatrix}\big) A_j^{-1}$ is an element of $\Gamma$.

For a non-zero meromorphic function $f$ of $\HH$ given by a $q$-expansion, we define $\ord_q (f)$ to be the smallest rational number $m$ such that there is a non-zero term of the form $q^{m}$ in the expansion of~$f$.
For each cusp $P_j$, define the map 
\[
v_{P_j}\colon \CC(X_\Gamma)^\times \to \ZZ,\quad f\mapsto w_j \cdot \ord_q(f|_{A_j});
\]
it is a surjective homomorphism and agrees with the valuation giving the order of vanishing of a function at $P_j$.
We extend $\ord_q$ and $v_{P_j}$ by setting $\ord_q(0)=+\infty$ and $v_{P_j}(0)=+\infty$.  

We now give a computable expression for the divisor of $g_\OO^{12N}$  on $X_{\Gamma}$.

\begin{lemma} \label{L:explicit D}
With notation as above, we have
\[
\operatorname{div}(g_\OO^{12N}) = \sum_{j=1}^r \bigg( 6N w_j \sum_{a\in \OO} B_2\big(\big\langle (aA_j)_1\big\rangle\big) \bigg)\cdot P_j,
\]
where $B_2(x)=x^2-x+1/6$, $(aA_j)_1$ is the first coordinate of the row vector $aA_j$, and $\langle x\rangle$ denotes the positive fractional part of the real number $x$, chosen so $0\leq \langle{x}\rangle < 1$ and  $x-\langle x\rangle \in \ZZ$.
\end{lemma}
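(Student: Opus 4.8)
The plan is to use Lemma~\ref{L:power of g}, which tells us that $\operatorname{div}(g_\OO^{12N})$ is supported on the cusps $P_1,\dots,P_r$; hence it equals $\sum_{j=1}^r v_{P_j}(g_\OO^{12N})\cdot P_j$, and the whole computation reduces to determining the integer $v_{P_j}(g_\OO^{12N})=w_j\cdot \ord_q\big(g_\OO^{12N}|_{A_j}\big)$ for each $j$. First I would unwind the local expansion at $P_j$. Since $g_\OO=\prod_{a\in\OO}g_a$, Lemma~\ref{L:Siegel basics}(\ref{L:Siegel basics iii}) gives $g_a|_{A_j}=\varepsilon(A_j)\,g_{aA_j}$, so
\[
g_\OO^{12N}|_{A_j}=\varepsilon(A_j)^{12N\cdot\#\OO}\prod_{a\in\OO}g_{aA_j}^{12N}=\prod_{a\in\OO}g_{aA_j}^{12N},
\]
the last equality because $\varepsilon(A_j)^{12}=1$. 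Each $aA_j$ lies in $N^{-1}\ZZ^2$ but not in $\ZZ^2$ (the element $a$ represents a nonzero class of $(N^{-1}\ZZ^2)/\ZZ^2$ and $A_j\in\SL_2(\ZZ)$ acts invertibly on that group), so every $g_{aA_j}$ is defined and $\ord_q\big(g_\OO^{12N}|_{A_j}\big)=12N\sum_{a\in\OO}\ord_q(g_{aA_j})$.

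The heart of the argument is the single–Siegel–function computation: for $b\in N^{-1}\ZZ^2-\ZZ^2$ one has $\ord_q(g_b)=\tfrac12 B_2(\langle b_1\rangle)$. To see this, write $b=b'+(m,0)$ with $m\in\ZZ$ and $b'_1=\langle b_1\rangle\in[0,1)$; by Lemma~\ref{L:Siegel basics}(\ref{L:Siegel basics ii}) the functions $g_b$ and $g_{b'}$ differ by a nonzero constant, so it suffices to treat $g_{b'}$. Inspecting the defining product, the prefactor $-q^{\frac12 B_2(b'_1)}e(b'_2(b'_1-1)/2)$ contributes $q^{\frac12 B_2(b'_1)}$ up to a nonzero constant; each factor $1-e(\pm b'_2)q^{\,n\pm b'_1}$ with $n\geq 1$ has exponent $n\pm b'_1>0$ and so contributes $0$ to the $q$-order; and the exceptional factor $1-e(b'_2)q^{b'_1}$ either has positive exponent (if $b'_1>0$) or equals the nonzero constant $1-e(b'_2)$ (if $b'_1=0$, since then $b'_2\notin\ZZ$ as $b'\notin\ZZ^2$). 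Hence $\ord_q(g_{b'})=\tfrac12 B_2(b'_1)$. Applying this with $b=aA_j$ gives $\ord_q(g_{aA_j}^{12N})=6N\,B_2(\langle (aA_j)_1\rangle)$, and assembling the pieces yields
\[
v_{P_j}(g_\OO^{12N})=w_j\cdot 12N\sum_{a\in\OO}\tfrac12 B_2(\langle (aA_j)_1\rangle)=6Nw_j\sum_{a\in\OO}B_2(\langle (aA_j)_1\rangle),
\]
which is precisely the asserted coefficient of $P_j$ (here I also use, as recorded just before the lemma, that $v_{P_j}$ is the valuation measuring the order of vanishing at $P_j$).

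I expect the only genuine subtlety — the main obstacle — to be the bookkeeping of the various root-of-unity and sign factors coming from parts (\ref{L:Siegel basics i}), (\ref{L:Siegel basics ii}) of Lemma~\ref{L:Siegel basics} and from $\varepsilon$, together with the fact that one is raising to the $12N$-th power precisely so that all of these become trivial; and, relatedly, verifying that the exceptional factor $1-e(b'_2)q^{b'_1}$ does not lower the $q$-order when $b'_1=0$, which is exactly where $b'\notin\ZZ^2$ is used. Once these two points are dispatched as above, the remainder is a routine assembly.
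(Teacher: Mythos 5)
Your proof is correct and follows essentially the same route as the paper's: reduce to computing $v_{P_j}(g_\OO^{12N})$ using Lemma~\ref{L:power of g}, transport the expansion to the cusp $P_j$ via Lemma~\ref{L:Siegel basics}(\ref{L:Siegel basics iii}), and apply $\ord_q(g_a)=\tfrac12 B_2(\langle a_1\rangle)$. The only difference is that you verify this last formula directly from the defining series (including the check that the exceptional factor $1-e(b'_2)q^{b'_1}$ is a nonzero constant when $\langle b_1\rangle=0$), whereas the paper simply cites Kubert--Lang for it; your verification is correct.
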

\begin{proof} 
For any $a\in (N^{-1}\ZZ^2)-\ZZ^2$, we have $\ord_q(g_a) = \tfrac{1}{2}\cdot B_2(\langle a_1 \rangle)$, cf.~\cite{MR648603}*{p.~31}.   We have
\[
v_{P_j}(g_{\OO}^{12N}) = \sum_{a\in \OO} v_{P_j}(g_a^{12N}) = \sum_{a\in \OO} w_j \ord_q(g_a^{12N} |_{A_j}) = \sum_{a\in \OO} w_j \ord_q(g_{aA_j}^{12N}),
\]
where the last equality uses Lemma~\ref{L:Siegel basics}(\ref{L:Siegel basics iii}).    Therefore,
\[
v_{P_j}(g_{\OO}^{12N}) = \sum_{a\in \OO} 12N w_j \ord_q(g_{aA_j}) = 6N w_j \sum_{a\in \OO} B_2\big(\big\langle (aA_j)_1\big\rangle \big).
\]
Since the poles and zeros of $g_\OO^{12N}$ are all cusps, we have $\operatorname{div}(g_\OO^{12N}) = \sum_{i=1}^r v_{P_j}(g_\OO^{12N})\cdot P_j$, and the lemma follows immediately.
\end{proof}

\subsection{Constructing hauptmoduls of prime power level} \label{SS:hauptmodul construction}

Fix a congruence subgroup $\Gamma$ of $\SL_2(\ZZ)$ of \emph{prime power} level $N>1$ that has genus $0$.   
Let $P_1,\ldots, P_r$ be the cusps of $\Gamma$; we choose our cusps so that $P_1$ is the cusp at $\infty$.

In this section, we explain how to construct an explicit hauptmodul of $\Gamma$ whose $q$-expansion has coefficients in $K_N$.
Moreover, our hauptmodul will be of the form
\begin{equation} \label{E:hauptmodul sum}
\sum_{i=1}^M \zeta_{2N^2}^{e_i} \prod_{a\in \calA_N} g_a^{m_{a,i}}
\end{equation}
with integers $m_{a,i}$ and $e_i$.

\subsubsection{Case 1: multiple cusps} \label{SSS:case 1 hauptmodul}

First assume that $\Gamma$ has at least two cusps.   We will use the following lemma to construct a hauptmodul for certain genus $0$ congruence subgroups.

Let $\OO_1, \cdots, \OO_n$ be the distinct $\Gamma$-orbits of $\calA_N$.
For each $\OO_i$, define the divisor $D_i:= \operatorname{div}(g_{\OO_i}^{12N})$  on $X_\Gamma$.
By Lemma~\ref{L:explicit D}, the divisors $D_1,\ldots,D_n$ are supported on $\{P_1,\ldots, P_r\}$ and are straightforward to compute.

\begin{lemma} \label{L:explicit hauptmodul}
Suppose there is an $n$-tuple $m \in \ZZ^n$ such that 
\[
\sum_{i=1}^n m_i  D_i = -12N\cdot P_1 + 12N\cdot P_2
\]  
Let $0\leq e < 2N^2$ be the integer satisfying  $e\equiv \sum_{i=1}^n m_i \sum_{a\in \OO_i} Na_2(N-Na_1) \pmod{2N^2}.$   Then
\[
h := \zeta_{2N^2}^e \prod_{i=1}^n g_{\OO_i}^{m_i}
\]
is a hauptmodul for $\Gamma$ whose $q$-expansion has coefficients in $K_N$. On $X_\Gamma$, we have $\operatorname{div}(h)=-P_1+P_2$. 
\end{lemma}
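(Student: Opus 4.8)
The plan is to verify the three asserted properties of $h$ in turn: that $h$ is a modular function for $\Gamma$ with the claimed divisor, that this divisor property forces $h$ to be a hauptmodul, and that the $q$-expansion of $h$ lies in $K_N$. First, by Lemma~\ref{L:power of g} each $g_{\OO_i}^{12N}$ is a modular function for $\Gamma$; since $h^{12N} = \zeta_{2N^2}^{12Ne}\prod_i (g_{\OO_i}^{12N})^{m_i}$ is a product of these (times a constant), it too is a modular function for $\Gamma$, and by Lemma~\ref{L:explicit D} its divisor is $\sum_i m_i D_i = 12N(-P_1 + P_2)$. Hence $h^{12N}$ has divisor $12N(-P_1+P_2)$; in particular $h^{12N}$ is meromorphic on $X_\Gamma$ with a single zero and single pole (each of multiplicity $12N$). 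To conclude that $h$ itself is a well-defined meromorphic function on $X_\Gamma$ — rather than merely that some $12N$-th power of $h$ is — I would check directly that $h$ is $\Gamma$-invariant: for $\gamma\in\Gamma$, the computation inside the proof of Lemma~\ref{L:power of g} shows $g_{\OO_i}|_\gamma = \pm\,(\text{root of unity})\cdot g_{\OO_i}$, and in fact, using Lemma~\ref{L:Siegel basics}(ii) to track the scalar precisely across the whole orbit product, these scalars all become trivial (this is exactly the point of the $12N$-th power in Lemma~\ref{L:power of g}, but the more refined statement is that $h$ alone already transforms trivially — this is where the specific exponent $e$ and the factor $\zeta_{2N^2}$ are doing their job). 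Granting this, $h$ is meromorphic on $X_\Gamma$ and $\operatorname{div}(h) = \tfrac{1}{12N}\operatorname{div}(h^{12N}) = -P_1 + P_2$.

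The hauptmodul claim is then immediate: a meromorphic function on a genus $0$ compact Riemann surface with divisor $-P_1+P_2$ has a single simple pole (at $P_1 = \infty$) and is therefore a degree-one map $X_\Gamma \to \PP^1$, i.e.\ an isomorphism, so $\CC(X_\Gamma) = \CC(h)$ and $h$ is a hauptmodul of $\Gamma$ with its unique pole at the cusp $\infty$, as required.

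For the rationality of the coefficients, I would argue as follows. Each Siegel function $g_a$ with $a\in N^{-1}\ZZ^2 - \ZZ^2$ has a $q$-expansion (in powers of $q^{1/N}$, up to the overall $q^{\frac12 B_2(a_1)}$ prefactor and a root-of-unity constant) with coefficients in $\ZZ[\zeta_N]$: the defining product $-q^{\frac12 B_2(a_1)} e(a_2(a_1-1)/2)(1-e(a_2)q^{a_1})\prod(1-e(a_2)q^{n+a_1})(1-e(-a_2)q^{n-a_1})$ has all its "interesting" coefficients in $\ZZ[\zeta_N]$ once one pulls out the leading constant $-e(a_2(a_1-1)/2)$, which is a $2N^2$-th root of unity. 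Multiplying over $a\in\OO_i$ and raising to the $m_i$, the product $\prod_i g_{\OO_i}^{m_i}$ is (a $2N^2$-th root of unity) times a Laurent series in $q^{1/N}$ with coefficients in $\ZZ[\zeta_N]\subseteq K_N$; the fractional-power prefactors combine to $q^{(\text{integer})/(2N)}$, and the hypothesis $\sum_i m_i D_i = -12N P_1 + 12N P_2$ together with the width-$w_1$ normalization at $\infty$ forces the leading exponent to be $-1/w_1$ times a unit, i.e.\ $h = q^{-1/w_1} + \cdots$ genuinely has an expansion in integer powers of $q^{1/w_1}$. The role of $e$ is precisely to cancel the leftover $2N^2$-th root of unity: one computes that the accumulated root-of-unity constant in $\prod_i g_{\OO_i}^{m_i}$ is $\zeta_{2N^2}^{-e'}$ where $e' \equiv \sum_i m_i \sum_{a\in\OO_i} N a_2(N - N a_1) \pmod{2N^2}$ — this comes from summing the $e(a_2(a_1-1)/2)$ prefactors, noting $e(a_2(a_1-1)/2) = \zeta_{2N^2}^{Na_2(Na_1 - N)} = \zeta_{2N^2}^{-Na_2(N-Na_1)}$ for $a\in N^{-1}\ZZ^2$ — so multiplying by $\zeta_{2N^2}^e$ yields a genuine element of $K_N[[q^{1/w_1}]][q^{-1/w_1}]$.

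The main obstacle I anticipate is the bookkeeping in the last paragraph: pinning down the exact root of unity. One must carefully combine (i) the explicit prefactor $-e(a_2(a_1-1)/2)$ in the definition of $g_a$, (ii) the effect of the reductions in Lemma~\ref{L:Siegel basics}(i),(ii) that move a general $a$ into $\calA_N$ (these are absorbed because $\OO_i\subseteq\calA_N$ already, but one must confirm the orbit $\OO_i$ really is a subset of $\calA_N$ and not just a set of coset representatives), and (iii) the requirement that, after multiplying by $\zeta_{2N^2}^e$, the constant term disappears into the series rather than surviving as an irrational scalar. Getting the sign of the exponent and the precise modulus $2N^2$ right (as opposed to $N^2$ or $N$) is the delicate point, and is exactly what the stated congruence for $e$ encodes; everything else is a direct consequence of Lemmas~\ref{L:Siegel basics}, \ref{L:power of g}, and \ref{L:explicit D} together with the genus $0$ hypothesis.
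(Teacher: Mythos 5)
Your treatment of the divisor of $h^{12N}$ and of the $K_N$-rationality of the coefficients matches the paper's, but there is a genuine gap at the step where you pass from ``$h^{12N}$ is modular for $\Gamma$'' to ``$h$ is modular for $\Gamma$''. You propose to ``check directly'' that the transformation scalars in $g_{\OO_i}|_\gamma = (\text{root of unity})\cdot g_{\OO_i}$ are all trivial, and you attribute this triviality to ``the specific exponent $e$ and the factor $\zeta_{2N^2}$''. That attribution cannot be right: $\zeta_{2N^2}^e$ is a constant, so it factors out of $h|_\gamma$ and has no bearing whatsoever on $\Gamma$-invariance; its only role is to normalize the leading coefficient so that the $q$-expansion lands in $K_N$. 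Moreover, the direct verification you defer to is not routine bookkeeping: Lemma~\ref{L:Siegel basics}(\ref{L:Siegel basics iii}) gives $g_a|_\gamma = \varepsilon(\gamma)\, g_{a\gamma}$ with $\varepsilon(\gamma)$ a $12$th root of unity that is generally nontrivial on $\Gamma$, and reducing $a\gamma$ back into $\calA_N$ via parts (\ref{L:Siegel basics i})--(\ref{L:Siegel basics ii}) accumulates further signs and $N$th roots of unity; proving that the total product over all orbits (weighted by the $m_i$) equals $1$ is essentially the content of the Kubert--Lang modularity criteria, which the paper only invokes much later (in Case 2 of \S\ref{SS:hauptmodul construction}) and which you neither cite nor reprove.

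The paper sidesteps this entirely with a different device: since $X_\Gamma$ has genus $0$, there exists a meromorphic $f$ on $X_\Gamma$ with $\operatorname{div}(f)=-P_1+P_2$; then $f^{12N}/h^{12N}$ is a function on $X_\Gamma$ with trivial divisor (by Lemma~\ref{L:power of g}, Lemma~\ref{L:explicit D}, and the hypothesis on $m$), hence constant, and since $f/h$ is a meromorphic function on the connected domain $\HH$ whose $12N$th power is a nonzero constant, $f/h$ is itself constant. Thus $h$ is a constant multiple of $f$, hence automatically modular for $\Gamma$ with $\operatorname{div}(h)=-P_1+P_2$, and no root-of-unity computation is needed. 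To repair your proof you would either need to adopt this comparison argument or actually carry out, with justification, the scalar computation you postponed.
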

\begin{proof}
Since $X_\Gamma$ has genus $0$, there is a meromorphic function $f$ on $X_\Gamma$ with $\operatorname{div}(f)=-P_1+P_2$.
Lemma~\ref{L:power of g} implies that $f^{12N}/h^{12N}$ defines a function on $X_\Gamma$; it has divisor 
\[
12N \operatorname{div}(f) - \sum_{i=1}^n m_i \operatorname{div}(g_{\OO_i}^{12N}) = 12N (-P_1+P_2) - \sum_{i=1}^n m_i D_i =0,
\]
where the last equality uses our assumption on $m$.
Therefore, $f^{12N}/h^{12N}$ is constant.
Since $f$ and~$h$ are meromorphic functions on the upper half-plane, we deduce that $f/h$ is a (non-zero) constant.
In particular, $h$ is modular for $\Gamma$ and $\operatorname{div}(h)=-P_1+P_2$.
The function $h$ on $X_\Gamma$ is a hauptmodul for $\Gamma$ since its only pole is the simple pole at $P_1$, i.e., the cusp at $\infty$.

It remains to show that the coefficients of $h$ lie in $K_N$.
Take any $a\in \calA_N$.
From the series defining $g_a$, we find that it is equal to the root of unity $e(a_2(a_1-1)/2) = \zeta_{2N^2}^{Na_2(Na_1-N)}$ times a Laurent series in $q^{1/(6N^2)}$ with coefficients in $K_N$.   Set $e':=\sum_{i=1}^n m_i \sum_{a\in \OO_i} Na_2(Na_1-N)$.
The coefficients of $\zeta_{2N^2}^{-e'}\prod_{i=1}^n g_{\OO_i}^{m_i}$ thus all lie in $K_N$.
The lemma follows since $e \equiv -e' \pmod{2N^2}$.
\end{proof}

Using the Cummins-Pauli classification of genus $0$ congruence subgroups \cite{MR2016709}, we have explicitly verified that the $n$-tuple $m$ from Lemma~\ref{L:explicit hauptmodul} always exists.
Using Lemma~\ref{L:explicit D}, the existence of $m$ comes down to finding integral solutions to $r$ linear equations with integer coefficients in $n$ variables.
Using Lemma~\ref{L:explicit hauptmodul}, we can thus find an explicit hauptmodul for $\Gamma$ of the form (\ref{E:hauptmodul sum}) with $M=1$ (we have $m_{a,i}=m_i$ if $a\in \OO_i$).

\begin{remark}
One can also abstractly prove the existence of the $n$-tuple $m$.
If $N$ is an \emph{odd} prime power, then any modular function of level $N$ whose zeros and poles are all cusps can be expressed as a constant times a product of Siegel functions $g_a$ with $a\in N^{-1} \ZZ^2-\ZZ$;  cf.~Theorem~1.1(i) of Chapter 5 of \cite{MR648603}. 

If $N\geq 4$ is a power of $2$, this can also be deduced from Theorem~1.1(i) of Chapter 5 of \cite{MR648603}.
(One needs to be a little careful here since $g_a$ has a different definition in \cite{MR648603}*{Ch.~4~\S1} when $2a \in \ZZ$.
For the alternate $g_a$ from loc.~cit.~with $2a\in \ZZ$, one can express them as a constant times a product of Siegel functions $g_{a'}$ with $a' \in \calA_4 \subseteq \calA_N$.)   

The case $N=2$ can be handled directly.   For example, one can show that 
\[
g_{(1/2,0)}^8 \cdot g_{(1/2,1/2)}^4 \quad \text{ and }\quad g_{(1/2,0)}^{12} \cdot  g_{(1/2,1/2)}^{12}
\]
are hauptmoduls for $\Gamma(2)$ and $\Gamma_0(2)$, respectively (note that $\Gamma_{\rm ns}(2)$ has a single cusp and does not fall into this case, it falls into case 2 below).
\end{remark}

\subsubsection{Case 2: a single cusp and $N\neq 11$}

Now assume that $X_\Gamma$ has a single cusp and that $N\neq 11$.
There are no non-constant modular functions for $\Gamma$ whose zeros and poles are only at the cusps of $X_\Gamma$.
In particular, a hauptmodul of $\Gamma$ will never be equal to a product of Siegel functions.

Using the Cummins-Pauli classification, we find that there is a congruence subgroup $\Gamma'$ that is a proper normal subgroup of $\Gamma$, also of level $N$ and containing $-I$, such that $X_{\Gamma'}$ has genus $0$ and has exactly $[\Gamma:\Gamma']$ cusps.
(This is where we use $N\neq 11$.)

Since $X_{\Gamma'}$ has multiple cusps, we know from \S\ref{SSS:case 1 hauptmodul} how to construct a hauptmodul $h'$ of $\Gamma'$ with coefficients in $K_N$ that is of the form (\ref{E:hauptmodul sum}).
Using that $\Gamma'$ is normal in $\Gamma$, we find that $h'|_\gamma$ is modular for $\Gamma'$ for all $\gamma \in \Gamma$ and the function depends only on the coset $\Gamma'\cdot \gamma$.
Define
\[
h := \sum_{\gamma\in \Gamma'\backslash \Gamma} h'|_\gamma;
\]
it is a modular function for $\Gamma$.  Since $X_\Gamma$ has only one cusp and $X_{\Gamma'}$ has $[\Gamma:\Gamma']$ cusps, we deduce that the modular functions $\{h'|_{\gamma}\}_{\gamma\in \Gamma'\backslash \Gamma}$ on $X_{\Gamma'}$ each have their unique (simple) pole at different cusps.
This implies that $h$ has a simple pole at the unique cusp of $X_\Gamma$ and is holomorphic elsewhere.
Therefore, $h$ is a hauptmodul for $\Gamma$.   

Since $h'$ is modular for $\Gamma(N)$ and has coefficients in $K_N$, so does $h'|_\gamma$ for all $\gamma\in \SL_2(\ZZ)$; see Proposition~\ref{P:FN Galois}. 
Therefore, the coefficients of $h$ lie in $K_N$.\medskip

Finally, it remains to show that $h$ is of the form (\ref{E:hauptmodul sum}).  It suffices to show that $h'|_\gamma$ is of the form (\ref{E:hauptmodul sum}) for a fixed $\gamma\in \Gamma$.  
We know that $h'$ is equal to some product $\zeta_{2N^2}^{e} \prod_{a\in \calA_N} g_a^{m_{a}}$, so $h|_\gamma = \varepsilon(\gamma)^b \zeta_{2N^2}^e \prod_{a\in \calA_N} g_{a\gamma}^{m_a}$ with $b:= \sum_{a\in \calA_N} m_a$ by Lemma~\ref{L:Siegel basics}(\ref{L:Siegel basics iii}).
Recall that for each $a\in \calA_N$, there is a unique $a*\gamma \in \calA_N$ such that $a\gamma$ lies in the same coset of $(N^{-1}\ZZ^2)/\ZZ^2$ as $a*\gamma$ or $-a*\gamma$.
From Lemma~\ref{L:Siegel basics}(\ref{L:Siegel basics i}) and (\ref{L:Siegel basics ii}), the functions $g_{a\gamma}^{m_a}$ and $g_{a*\gamma}^{m_a}$ agree up to a multiplication by some computable root of unity $-\zeta_{N}^{e'}$.
Therefore, $h|_\gamma$ is equal to $\varepsilon(\gamma)^b$ times a function of the form (\ref{E:hauptmodul sum}) with $M=1$.   

It remains only to show that $\varepsilon(\gamma)^b$ is a power of a  $2N^2$-th root of unity.    In \cite{MR648603}*{Ch.~3~\S5}, Kubert and Lang give a necessary and sufficient condition for the product $\prod_{a\in \calA_N} g_{a}^{m_a}$ to be modular for $\Gamma(N)$; these conditions hold since $h'$ is modular for $\Gamma'\supseteq \Gamma(N)$.
If $N$ is a power of a prime $\ell\geq 5$, then Theorem~5.2 of Chapter~3 of \cite{MR648603} implies that $b \equiv 0 \pmod{12}$ and hence $\varepsilon(\gamma)^b=1$.
If $N$ is a power of $3$, then Theorem~5.3 of Chapter~3 of \cite{MR648603} implies that $b \equiv 0 \pmod{4}$ and hence $\varepsilon(\gamma)^b$ is a power of $\zeta_3$.
If $N$ is a power of $2$, then Theorem 5.3 of Chapter~3 of \cite{MR648603} implies that $b\equiv 0 \pmod{3}$ and hence $\varepsilon(\gamma)^b$ is a power of $\zeta_4$.
Therefore, $\varepsilon(\gamma)^b$ is indeed a power of a $2N^2$-th root of unity.

\subsubsection{Case 3: $N=11$}

The remaining case is when $X_\Gamma$ has a single cusp and $N=11$.  We include this case only for completeness; we will not need it for our application.

Define the function
\[
f(\tau):= \prod_{(a_1,a_2)\in B} g_{(a_1/11,\,a_2/11)}(\tau),
\]
where
\[
B:=\left\{\begin{array}{c} 
 ( 0, 1 ), ( 0, 2 ), ( 0, 3 ), ( 1, 0 ), ( 1, 2 ), ( 1, 5 ), ( 1, 7 ), ( 2, 1 ), ( 2, 2 ), ( 2, 4 ), ( 2, 5 ), ( 2, 6 ),\\ ( 2, 7 ), ( 2, 8 ), ( 2, 9 ), ( 2, 10 ), ( 3, 0 ), ( 3, 2 ), ( 3, 4 ), ( 3, 5 ), ( 3, 6 ), ( 3, 8 ), ( 3, 10 ), ( 4, 0 ), \\( 4, 1 ), ( 4, 2 ), ( 4, 4 ), ( 4, 5 ), ( 4, 6 ), ( 5, 1 ), ( 5, 4 ), ( 5, 5 ), ( 5, 6 ), ( 5, 7 ), ( 5, 8 ), ( 5, 9 ) 
\end{array}\right\}.
\]
One can verify that 
\[
\sum_{(a_1,a_2)\in B} a_1^2 \equiv \sum_{(a_1,a_2)\in B} a_2^2 \equiv \sum_{(a_1,a_2)\in B} a_1 a_2 \equiv 0 \pmod{11}
\]
and that $|B|=36\equiv 0 \pmod{12}$.   Theorem~5.2 of \cite{MR648603}*{Ch.~3 \S5} implies that $f$ is a modular function for $\Gamma(11)$.   Using $\sum_{(a_1,a_2)\in B} a_2/11\cdot (a_1/11-1)/2=-60/11$ and the $q$-expansion of Siegel functions from \S\ref{SS:Siegel}, we find that all the coefficients of $f$ lie in $K_{11}$.  Therefore, $f\in \calF_{11}$.

Using that $\Gamma(11)$ is normal in $\Gamma$, we find that $f|_\gamma$ is modular for $\Gamma(11)$ for all $\gamma \in \Gamma$ and the function depends only on the coset $\Gamma(11)\cdot \gamma$.
Define
\[
h := \sum_{\gamma\in \Gamma(11)\backslash \Gamma} f|_\gamma;
\]
it is a modular function for $\Gamma$.  That $h$ is of the form (\ref{E:hauptmodul sum}) follows as in the previous case.

We claim that $h$ is a hauptmodul for $\Gamma$.  From our description of $h$ in terms of Siegel functions, we find that $h$ has no poles except possibly at the unique cusp (at $\infty$).   From Cummins and Pauli \cite{MR2016709}, there is a unique genus $0$ congruence subgroup of $\SL_2(\ZZ)$ of level $11$ up to conjugacy in $\GL_2(\ZZ)$ (the one labeled $\text{11A}^0$).  We have computed all the possible $\Gamma$ and shown that $h$ has a simple pole at $\infty$, and is therefore a hauptmodul.

\begin{remark}
The set $B$ comes from \S5.3 of \cite{MR2059637}.   In \cite{MR2059637}, methods are given to compute hauptmoduls for genus $0$ congruence subgroups (unfortunately, their hauptmodul tables are no longer available).   They use ``generalized Dedekind eta functions'' which are essentially Siegel functions.
\end{remark}


\subsection{Rational function \texorpdfstring{$J(t)$}{\textit{J}(\textit{t})}} \label{SS:hauptmoduls J}

For a hauptmodul $h$ of $\Gamma$, there is a unique function $J(t)\in \CC(t)$ such that $J(h)=j$; it has degree $d:=[\SL_2(\ZZ):\pm \Gamma]$.    

Let us briefly explain how to compute $J(t)$ assuming that one can compute sufficiently many terms of the expansion of $f$.   Let $K\subseteq \CC$ be a field containing all the coefficients of $h$.  
Consider the equation
\begin{equation} \label{E:J equation}
(a_d h^d + \ldots + a_1 h + a_0) - j \cdot (b_d h^d+\cdots +b_1 h +b_0) = 0
\end{equation}
with unknowns $a_i, b_i \in K$, where $d:=[\SL_2(\ZZ):\pm \Gamma]$.
Computing the $q$-expansion coefficients of the left-hand side of (\ref{E:J equation}) yields a system of homogeneous linear equations in the unknowns~$a_i$ and~$b_i$.
The existence and uniqueness of $J$ ensure that the solutions $(a_1,\ldots, a_d, b_1,\ldots, b_d) \in K^{2d}$ form a one-dimensional subspace.
By computing sufficiently many coefficients of (\ref{E:J equation}) one can find a non-zero solution $(a_1,\ldots, a_d, b_1,\ldots, b_d) \in K^{2d}$, unique up to scaling by $K^\times$, and
\[
J(t)= (a_d t^d + \ldots + a_1 t + a_0)/(b_d t^d+\cdots +b_1 t +b_0) \in K(t)
\]
is then the unique rational function for which $J(h)=j$.  Note that if the hauptmodul $h$ is constructed as in the previous section then we will have $J(t)\in K_N(t)$, where $N$ is the level of $\Gamma$.

\section{Modular curves of genus 0}  \label{S:genus 0 check}

Fix the following:
\begin{itemize}
\item An integer $N>1$ that is a prime power.
\item A subgroup $G$ of $\GL_2(\ZZ/N\ZZ)$ satisfying $-I \in G$ and $\det(G)=(\ZZ/N\ZZ)^\times$.    
\item A rational function $J(t)\in \QQ(t)$.
\end{itemize}
In this section, we explain how to determine if the function field of $X_G$ is of the form $\QQ(f)$ for some modular function $f\in \calF_N$ satisfying $J(f)=j$.   We will use this to verify the entries of Tables \ref{table:g0three}--\ref{table:g0two}.
\medskip

If such an $f$ exists, then $X_G$ is isomorphic to $\PP^1_\QQ$ and the morphism $\pi_G\colon X_G\to \PP^1_\QQ$ is given by the relation $j=J(f)$ in their function fields.  We may assume the necessary condition that $[\GL_2(\ZZ/N\ZZ):G]=\deg \pi_G$ agrees with the degree of $J(t)$.   

\begin{remark}
In \S\ref{S:how} we explain how the $J(t)$ listed in Tables \ref{table:g0three}--\ref{table:g0two}, were actually found, which involves the use of a Monte Carlo algorithm and assumes the Generalized Riemann Hypothesis (GRH).  The purpose of this section is to explain how we can unconditionally verify a given $J(t)$, regardless of how it was found.
\end{remark}

\subsection{Construction of possible \texorpdfstring{$f$}{\textit{f}}}

Let $\Gamma$ be the congruence subgroup consisting of $\gamma\in \SL_2(\ZZ)$ for which $\gamma^t$ modulo $N$ lies in $G$ (equivalently, in $G \cap \SL_2(\ZZ/N\ZZ)$).   By Lemma~\ref{L:modular connection}(\ref{L:modular connection ii}), we may assume that $\Gamma$ has genus $0$  since otherwise  $X_G$ has positive genus and its function field cannot be of the form $\QQ(f)$.

The group $\Gamma$ acts on the right on the field $\calF_N$; let $\calF_N^{\Gamma}$ be subfield  fixed by this action.   By Lemma~\ref{L:modular connection}(\ref{L:modular connection i}), we have $K_N(X_G)=\calF_N^\Gamma$.

In \S\ref{SS:hauptmodul construction}, we described how to compute an explicit hauptmodul $h$ for $\Gamma$ such that  coefficients of its $q$-expansion all lie in $K_{N'}\subseteq K_N$, where the level $N'$ of $\Gamma$ divides $N$.   Therefore, we have
\[
K_N(X_G)=\calF_N^\Gamma = K_N(h).
\]

Moreover, we can express $h$ in terms of Siegel functions and hence we can compute as many of its coefficients as we desire.    In \S\ref{SS:hauptmoduls J}, we described how to compute the unique rational function $J'(t)\in K_N(t)$ for which $j=J'(h)$.  The degree of $J'(t)$ agrees with $[\SL_2(\ZZ):\Gamma]=[\GL_2(\ZZ/N\ZZ):G]$, thus $J(t)$ and $J'(t)$ have the same degree.

\begin{remark}
The rational function $J'(t)$ gives a map to the $j$-line from $X_\Gamma$, which is defined over $K_N=\QQ(\zeta_N)$, while the rational function $J(t)$ gives a map to the $j$-line from $X_G$, which is defined over $\QQ$.
We use $J'(t)$ in our procedure to verify $J(t)$, but note that $J'(t)$ does not determine $J(t)$; in general there will be multiple non-conjugate subgroups $G$ corresponding to $\Gamma$ and a different rational function $J(t)$ for each of the corresponding $X_G$ (in total we have 220 modular curves $X_G$ of genus 0 corresponding to 73 modular curves $X_\Gamma$).
\end{remark}

\begin{lemma} \label{L:psi intro}
The modular functions $f\in K_N(X_G)$ that satisfy $K_N(X_G)=K_N(f)$ and $J(f)=j$ are precisely those of the form $\psi(h)$, where $\psi(t)\in K_N(t)$ is a degree $1$ function satisfying $J'(t)=J(\psi(t))$.
\end{lemma}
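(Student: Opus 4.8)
The plan is to establish a bijection between the set of suitable $f$ and the set of admissible degree $1$ functions $\psi$, in two directions. First I would note that both $h$ and any candidate $f$ generate $K_N(X_G)$ over $K_N$; since $K_N(X_G)$ is a rational function field in one variable, any two generators differ by an automorphism of $\PP^1_{K_N}$, i.e.\ $f = \psi(h)$ for a unique degree $1$ rational function $\psi(t) \in K_N(t)$. Conversely, for any degree $1$ function $\psi(t) \in K_N(t)$, the function $\psi(h)$ also generates $K_N(X_G) = K_N(h)$. So the condition $K_N(X_G) = K_N(f)$ alone characterizes the set $\{f\}$ as $\{\psi(h) : \psi \in K_N(t),\ \deg \psi = 1\}$, and it remains to track the effect of the extra constraint $J(f) = j$.

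Second, I would impose $J(f) = j$. Substituting $f = \psi(h)$ gives $J(\psi(h)) = j$. On the other hand, by the construction recalled just before the lemma, $J'(h) = j$ as well, where $J'(t) \in K_N(t)$ is the unique rational function with this property. Thus $J(\psi(h)) = J'(h)$ as elements of $K_N(X_G) = K_N(h)$. Since $h$ is transcendental over $K_N$ (it is a hauptmodul, so $K_N(h)$ has transcendence degree $1$ over $K_N$), the identity $J(\psi(t)) = J'(t)$ holds as rational functions in the formal variable $t$. This gives the forward implication: every valid $f$ has the form $\psi(h)$ with $\psi$ of degree $1$ satisfying $J'(t) = J(\psi(t))$.

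Third, for the reverse implication, suppose $\psi(t) \in K_N(t)$ has degree $1$ and satisfies $J'(t) = J(\psi(t))$. Set $f := \psi(h)$. As noted above, $K_N(f) = K_N(h) = K_N(X_G)$ automatically since $\psi$ has degree $1$. And $J(f) = J(\psi(h)) = J'(h) = j$, using the functional identity and the defining property of $J'$. Hence $f$ lies in the stated set. Combining the two directions proves the lemma.

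The only genuinely delicate point is the passage from the identity $J(\psi(h)) = J'(h)$ in $K_N(X_G)$ to the identity $J(\psi(t)) = J'(t)$ of rational functions — this is immediate once one observes that $h$ is transcendental over $K_N$, so evaluation at $h$ embeds $K_N(t)$ into $K_N(X_G)$ injectively; I would state this explicitly but expect no obstacle. Everything else is formal manipulation with function fields of the rational curve $X_G \cong \PP^1_{K_N}$ (or $\PP^1_\QQ$), and the degree bookkeeping — that $\deg J = \deg J' = [\GL_2(\ZZ/N\ZZ):G]$ forces $\deg \psi = 1$ to be consistent — was already arranged in the surrounding text.
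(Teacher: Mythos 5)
Your proposal is correct and follows essentially the same route as the paper: write $f=\psi(h)$ for a degree $1$ function $\psi$ using that both generate $K_N(X_G)=K_N(h)$, then transfer the identity $J(\psi(h))=J'(h)=j$ to the identity $J(\psi(t))=J'(t)$ of rational functions (the paper phrases this via the uniqueness of $J'$ as the element of $K_N(t)$ with $J'(h)=j$, which is the same fact as your transcendence observation), and reverse the steps for the converse.
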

\begin{proof}
First take any $\psi(t)\in K_N(t)$ of degree $1$ satisfying $J'(t)=J(\psi(t))$.   Define $f:=\psi(h)$.  We have $K_N(f)=K_N(h)=K_N(X_G)$, since $\psi$ has an inverse, and $J(f)=J(\psi(h))=J'(h)=j$.

Now suppose that $K_N(X_G)=K_N(f)$ for some $f\in K_N(X_G)$ satisfying $J(f)=j$.   Since $K_N(f)=K_N(X_G)=K_N(h)$, we have $f=\psi(h)$ for a unique $\psi(t)\in K_N(t)$ of degree $1$.   We then have $j=J(f)=J(\psi(h))$ and therefore $J'(t)=J(\psi(t))$, since $J'(t)$ is the unique element of $K_N(t)$ that satisfies $J'(h)=j$.
\end{proof}

\subsection{Finding possible \texorpdfstring{$f$}{\textit{f}}} \label{SS:finding}

Define $\Psi$ to be the set of $\psi(t)\in K_N(t)$ of degree $1$  for which $J'(t)=J(\psi(t))$; these $\psi$ arise in Lemma~\ref{L:psi intro}.   We now explain how to compute $\Psi$.

Choose three distinct elements $\beta_1,\beta_2, \beta_3 \in K_N \cup \{\infty\}$.    For $1\leq i \leq 3$, define the set 
\[
R_i :=\{ \alpha \in K_N \cup \{\infty\} : J'(\beta_i)=J(\alpha) \text{ and } \ord_{\beta_i}(J') = \ord_{\alpha} (J) \},
\]
where $\ord_{\beta_i}(J')$ is the order of vanishing of $J'(t)$ at $t=\beta_i$.  
Let $R$ be the set of triples $\alpha=(\alpha_1,\alpha_2,\alpha_3)\in R_1\times R_2\times R_3$ such that $\alpha_1$, $\alpha_2$ and $\alpha_3$ are distinct.
Let $\psi_\alpha \in K_N(t)$ be the \emph{unique} rational function of degree $1$ such that $\psi_\alpha(\beta_i)=\alpha_i$ for all $1\leq i\leq 3$.

Take any $\psi\in \Psi$.  We have $J'(\beta_i)=J(\psi(\beta_i))$ and $\ord_\beta(J')=\ord_{\psi(\beta)}(J)$ for each $1\leq i \leq 3$.   Therefore, $\psi(\beta_i) \in R_i$ for each $1\leq i \leq 3$ and hence $\psi = \psi_\alpha$ for some $\alpha \in R$.    So we have
\[
\Psi = \{ \psi_\alpha : \alpha \in R, \, J'(t)=J(\psi(t))\}.
\]
Since $R$ is finite, this gives us a way to compute the (finite) set $\Psi$.

By Lemma~\ref{L:psi intro}, the set
\[
\{ \psi(h): \psi \in \Psi\}
\]
is the set of modular functions $f\in K_N(X_G)$ that satisfy $K_N(X_G)=K_N(f)$ and $J(f)=j$.

\subsection{Checking each \texorpdfstring{$f$}{\textit{f}}} \label{SS:checking f}

Let $f$ be one of the finite number of functions that satisfy $K_N(X_G)=K_N(f)$ and $J(f)=j$.   We just saw how to compute all such $f$; they are of the form $\psi(h)$ for a degree $1$ function $\psi(t)\in K_N(t)$ and a modular function $h$ satisfying $K_N(X_G)=K_N(h)$ that is expressed in terms of Siegel functions.
Recall from \S\ref{S:modular functions} that each $A\in \GL_2(\ZZ/N\ZZ)$ acts on $\calF_N$ via the isomorphism $\theta_N \colon \GL_2(\ZZ/N\ZZ)/\{\pm I\} \xrightarrow{\sim} \Gal(\calF_N/\QQ(j))$ of Proposition~\ref{P:FN Galois}, and for $f\in \calF_N$ we use $A_*(f):=\theta_N(A)(f)$ to denote this action.
\bigskip\bigskip 

\begin{lemma} \label{L:checking f}
\begin{romanenum}
\item \label{L:checking f i}
We have $\QQ(X_G)=\QQ(f)$ if and only if $f \in \QQ(X_G)$.
\item \label{L:checking f ii}
For a matrix $A\in G$, we have $A_*(f)=f$ if and only if $\ord_q(A_*(f)-f)> 2w/N'$, where $w$ is the width of the cusp $\infty$ of $X_\Gamma$ and $N'$ is the level of $\Gamma$.
\end{romanenum}
\end{lemma}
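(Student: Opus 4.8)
\emph{Part (\ref{L:checking f i}).} One implication is immediate, since $f\in\QQ(f)$. For the converse, suppose $f\in\QQ(X_G)$. As $K_N(f)=K_N(X_G)$ has transcendence degree one over $K_N$, the function $f$ is non-constant, so $\QQ(f)$ is a subfield of $\QQ(X_G)$ of finite index and it suffices to show this index is $1$. The plan is to base change to $K_N$ and count degrees. Because $\det(G)=(\ZZ/N\ZZ)^\times$, Proposition~\ref{P:FN Galois} gives that $\QQ$ is algebraically closed in $\QQ(X_G)$; hence the $N$-th cyclotomic polynomial stays irreducible over $\QQ(X_G)$, the compositum $\QQ(X_G)\cdot K_N=K_N(X_G)$ is a field, and $[K_N(X_G):\QQ(X_G)]=[K_N:\QQ]$; likewise $[K_N(f):\QQ(f)]=[K_N:\QQ]$. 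Computing $[K_N(X_G):\QQ(f)]$ along the two towers $\QQ(f)\subseteq\QQ(X_G)\subseteq K_N(X_G)$ and $\QQ(f)\subseteq K_N(f)=K_N(X_G)$ then forces $[\QQ(X_G):\QQ(f)]=[K_N(X_G):K_N(f)]=1$.

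\emph{Part (\ref{L:checking f ii}).} Put $g:=A_*(f)-f$. If $A_*(f)=f$ then $g=0$ and $\ord_q(g)=+\infty$, so one implication is trivial; the content is that a nonzero $g$ has only a tiny order of vanishing at the cusp $\infty$. First I would show $g$ is a low-degree rational function of the hauptmodul $h$. Since $G\cap\SL_2(\ZZ/N\ZZ)=\ker(\det|_G)$ is normal in $G$, the automorphism $A_*$ of $\calF_N$ sends the fixed field $\calF_N^{G\cap\SL_2(\ZZ/N\ZZ)}=K_N(X_G)=K_N(h)$ onto itself and so restricts to a field automorphism of $K_N(h)$; this forces $A_*(h)$ to be a degree-one rational function of $h$ over $K_N$, and combined with $f=\psi(h)$ (Lemma~\ref{L:psi intro}) it gives $A_*(f)=\phi(h)$ for some degree-one $\phi\in K_N(t)$. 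Hence $g=\phi(h)-\psi(h)$ has degree at most $2$ as a rational function of $h$.

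Next I would pin down the relevant transformation. Applying $A_*$ to $j=J(\psi(h))$ and using that $A_*$ fixes $j$ and that $J\in\QQ(t)$, we get $j=J(\phi(h))$, hence $J\circ\phi=J\circ\psi$; equivalently $\mu:=\phi\circ\psi^{-1}$ is a Möbius transformation with $J\circ\mu=J$, and $g=\mu(f)-f=(\mu(t)-t)\circ f$. The group of Möbius transformations fixing $J$ is finite, so a nonzero $g$ corresponds to a $\mu$ of finite order $\ge 2$; a short computation of the multipliers of $\mu$ at its two fixed points (nontrivial roots of unity) shows that $t\mapsto\mu(t)-t$ has only simple zeros. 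Since $f\colon X_\Gamma\xrightarrow{\sim}\PP^1$ is an isomorphism, $g$ therefore has a zero of order at most $1$ at the cusp $P_1=\infty$, so $\ord_q(g)=v_{P_1}(g)/w\le 1/w$ for nonzero $g$ (using $v_{P_1}(g)=w\cdot\ord_q(g)$, as $A_1=I$). Comparing with the stated threshold — for the transposed congruence subgroup $\Gamma$ occurring here the width $w$ of the cusp $\infty$ satisfies $1/w\le 2w/N'$ — gives $\ord_q(g)\le 2w/N'$ whenever $g\ne 0$, which is exactly the criterion. (Even the cruder bound $v_{P_1}(g)\le\deg g\le 2$, i.e.\ $\ord_q(g)\le 2/w$, already suffices to make the resulting test effective.)

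\emph{Main obstacle.} The delicate point is establishing that $A_*$ restricts to a \emph{field} automorphism of $K_N(h)$, so that $A_*(f)$ really is a degree-one rational function of $h$; this is the linchpin, and it rests on the normality of $G\cap\SL_2(\ZZ/N\ZZ)$ in $G$ and on the identification $K_N(X_G)=K_N(h)$ from earlier in this section. The only other non-formal ingredient is the final comparison of the width $w$ with the level $N'$ of the transposed $\Gamma$, which is careful bookkeeping rather than a real difficulty. The multiplier computation and the degree count in part (\ref{L:checking f i}) are routine.
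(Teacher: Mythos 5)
Your proof is correct and, for part (ii), follows the same skeleton as the paper's; the differences are worth a brief comparison. For part (i) the paper simply compares $[\QQ(X_G):\QQ(j)]=\deg\pi_G$ with $[\QQ(f):\QQ(j)]=\deg J$ and invokes the standing assumption $\deg\pi_G=\deg J$; you instead base change to $K_N$ and exploit the other standing hypothesis $K_N(f)=K_N(X_G)$, using that $\QQ$ is algebraically closed in $\QQ(X_G)$ to make the degree count along the two towers work. Both inputs are part of the setup of \S\ref{SS:checking f}, so either route is legitimate. For part (ii) the paper's argument is exactly your first half: $A_*$ preserves $\calF_N^{G\cap\SL_2(\ZZ/N\ZZ)}=K_N(h)$ (normality of $G\cap\SL_2$ in $G$, which the paper leaves implicit in the phrase ``$f$ and $A_*(f)$ have a unique simple pole''), so $A_*(f)-f$ is a rational function of $h$ of degree at most $2$, and sufficient vanishing at the cusp $\infty$ forces it to vanish identically --- the paper stops there, asserting that $\ord_q(A_*(f)-f)>2w/N'$ yields a triple zero at $\infty$. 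Your deck-group refinement ($A_*(f)=\mu(f)$ with $\mu$ a finite-order M\"obius transformation commuting with $J$, so $\mu(t)-t$ has only simple zeros) is a genuine strengthening: it reduces the required vanishing at $\infty$ from a triple zero to a double one. The price is the width/level comparison $1/w\le 2w/N'$, which you assert but do not verify; note, however, that the paper's own deduction implicitly requires the stronger comparison $2/w\le 2w/N'$ (since $v_{P_1}(g)=w\cdot\ord_q(g)$ and a triple zero means $v_{P_1}(g)\ge 3$), so this piece of bookkeeping is elided on both sides, and your version needs less of it. I would only ask you to make that comparison explicit for the $\Gamma$ actually occurring, since it is false for arbitrary congruence subgroups.
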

\begin{proof}
We first prove part (\ref{L:checking f i}); only one implication needs proof.  Suppose that $f\in \QQ(X_G)$.    Then $\QQ(f) \subseteq \QQ(X_G)$ and it suffices to show that these two fields have the same degree over $\QQ(j)$.   This is true since we have been assuming that $\deg \pi_G$ is equal to the degree of $J(t)$.

We now prove part (\ref{L:checking f ii}); only one implication needs proof.  Suppose that $\ord_q(A_*(f)-f)> 2w/N'$.  As meromorphic functions on $X_\Gamma$, $f$ and $A_*(f)$ have a unique (simple) pole since $h$ has this property and $\psi$ has degree $1$.  Therefore, the function $A_*(f)-f$ on $X_\Gamma$ is zero or has at most two poles (and hence at most two zeros).   Our assumption $\ord_q(A_*(f)-f)> 2w/N'$ implies that $A_*(f)-f$ has a zero of order $3$ at the cusp $\infty$ and thus $A_*(f)-f=0$.
\end{proof}

By Lemma~\ref{L:checking f}(\ref{L:checking f i}), we have $\QQ(X_G)=\QQ(f)$ if and only if $A_*(f)=f$ for all $A\in G$ in a set of generators of $G$; it suffices to consider $A\in G$ for which $\det(A)$ generate $(\ZZ/N\ZZ)^\times$ since $h$ and hence $f$ is fixed by $G\cap\SL_2(\ZZ/N\ZZ)$.  It remains to describe how to determine whether $A_*(f)$ is equal to $f$.   By Lemma~\ref{L:checking f}(\ref{L:checking f ii}), it suffices to compute enough terms of the $q$-expansion of $A_*(f)-f$ to determine whether $\ord_q(A_*(f)-f)> 2w/N'$ holds.

Finally, let us briefly explain how to compute terms in the $q$-expansion of $A_*(f)-f$.
Let $d$ be an odd integer congruent to $\det(A)$ modulo $N$.
Choose a matrix $\gamma\in \SL_2(\ZZ)$ so that  $A^t\equiv \big(\begin{smallmatrix} 1 & 0 \\ 0 & d\end{smallmatrix}\big) \, \gamma \pmod{N}$.  We thus have
\begin{equation} \label{E:gf-f expansion}
A_*(f) -f = \sigma_d(f)|_\gamma -f  = \sigma_{d}(\psi)(\sigma_d(h)|_\gamma ) - \psi(h), 
\end{equation}
where $\sigma_d(\psi)$ is the rational function with $\sigma_d$ applied to the coefficients of its numerator and denominator.
Our hauptmodul $h$ is of the form $\sum_{i=1}^M \zeta_{2N^2}^{e_i} \prod_{a\in \calA_N} g_a^{m_{a,i}}$ for certain integers $e_i$ and $m_{a,i}$, so 
\[
\sigma_d(h)|_\gamma= \sum_{i=1}^M \zeta_{2N^2}^{e_i d} \prod_{a\in \calA_N} \big(\sigma_{d}(g_a)|_\gamma\big)^{m_{a,i}}.
\]
From the series expansion of $g_a$, one easily checks that $\sigma_d(g_{(a_1,a_2)})= g_{(a_1,da_2)}$.
From Lemma~\ref{L:Siegel basics}(\ref{L:Siegel basics iii}), we have $\sigma_{d}(g_a)|_\gamma = \varepsilon(\gamma) g_{(a_1,da_2)\gamma}$ and hence
\[
\sigma_d(h)|_\gamma= \sum_{i=1}^M \zeta_{2N^2}^{e_i d} \cdot \prod_{a\in \calA_N}\varepsilon(\gamma)^{m_{a,i}} \cdot \prod_{a\in \calA_N} g_{(a_1,da_2)\gamma}^{m_{a,i}}.
\]
Thus by computing enough terms in the $q$-expansion of the functions $\{g_a\}_{a\in \calA_N}$, we are able to compute the $q$-expansion of $h$ and $\sigma_d(h)|_\gamma$ to as many terms as we desire.
This allows us to compute terms in the $q$-expansion of $A_*(f)-f$ via (\ref{E:gf-f expansion}).  

\begin{remark}
Suppose that $X_\Gamma$ has at least $3$ cusps.  We then have $A_*(f)=f$ if and only if $A_*(f)$ and $f$ take the same the value at any three of the cusps (as in the proof of Lemma~\ref{L:checking f} this implies that $A_*(f)-f$ has at least three zeros and hence is the zero function).    In the case of at least $3$ cusps, our hauptmodul $h$ was given as a constant times a product of Siegel functions; so its value at the cusp $\infty$ is determined by the first term of the $q$-expansion of $h$.   The value at any other cusp~$c$ can be determined by the first term of the $q$-expansion of $h|_{\gamma}$ with $\gamma\in \SL_2(\ZZ)$ satisfying $\gamma\infty=c$.    This approach is quicker since fewer terms of the $q$-expansions are required.
\end{remark}

\subsection{Verifying the entries of our tables}\label{SS:verify}

We now explain how to verify the validity of our genus~$0$ tables.  \texttt{Magma} scripts that perform these verifications can be found at \cite{MagmaScripts}.

Each row of Tables \ref{table:g0three}--\ref{table:g0two} gives a set of generators of a subgroup $G$ of $\GL_2(\ZZ/N\ZZ)$ that satisfies $-I\in G$ and $\det(G)=(\ZZ/N\ZZ)^\times$ for a prime power $N$.  We may assume that $N>1$.   By composing rational maps,  we obtain a corresponding rational function $J(t)\in \QQ(t)$.

Using the earlier parts of \S\ref{S:genus 0 check}, we can construct a modular function $f\in \calF_N$ such that $\QQ(X_G)=\QQ(f)$ and $J(f)=j$.  So  $X_G$ is isomorphic to $\PP^1_\QQ$ and the morphism $\pi_G\colon X_G\to \PP^1_\QQ$ is given by the relation $j=J(f)$ in their function fields.  (We also note that there is no harm in replacing $G$ by a conjugate group; this is useful because one can reuse the hauptmodul computations for different groups in the tables).\medskip

Fix a group $G \subseteq \GL_2(\ZZ/N\ZZ)$ as above, and a modular function $f\in\calF_N$ satisfying $\QQ(X_G)=\QQ(f)$ and $J(f)=j$.    

Now fix another group $G' \subseteq \GL_2(\ZZ/N'\ZZ)$ from our table so that $N$ divides $N'$ and the image of $G'$ in $\GL_2(\ZZ/N\ZZ)$ is conjugate to a subgroup of $G$.   In the above computations, we have constructed a modular function $f'$ satisfying $\QQ(X_{G'})=\QQ(f')$ and $J'(f')=j$ for a rational function $J'(t) \in \QQ(t)$ also arising from the tables.  

Take any subgroup $\widetilde G\subseteq \GL_2(\ZZ/N\ZZ)$  conjugate to $G'$ whose image modulo $N$ lies in $G$.   Choose any $A\in \GL_2(\ZZ/N'\ZZ)$ for which $\tilde G:=AG'A^{-1}$ and define $\tilde f:=A_*(f')$.  We have an inclusion of fields  
\[
\QQ(\tilde f)=\QQ(X_{\widetilde G})\supseteq \QQ(X_G)=\QQ(f).
\] 
The extension $\QQ(\tilde f)/\QQ(f)$ has degree $i:= [\GL_2(\ZZ/N'\ZZ)\!:\!G']/[\GL_2(\ZZ/N\ZZ)\!:\!G]$. Therefore, $\varphi(\widetilde{f})=f$ for a unique $\varphi(t) \in \QQ(t)$ of degree $i$.   We can compute $\varphi(t)$ using the method from \S\ref{SS:hauptmoduls J}; the coefficients of $f$ and $\widetilde f$ can be computed as in \S\ref{SS:checking f}.

The rational function $\varphi$ is not unique, it depends on the choices of $\widetilde G$, $f$, $f'$, and $A$.   However, any other rational function occurring would be of the form $\psi'(\varphi(\psi(t)))$, where $\psi,\psi'\in \QQ(t)$ are degree~$1$ functions satisfying $J(\psi(t))=J(t)$ and $J'(\psi'(t))=J'(t)$.   Note that all the possible $\psi$ and $\psi'$ can be computed as in \S\ref{SS:finding} (with $J=J'$).     We have checked that the rational function relating $G$ and $G'$ in our tables, when given, is indeed of the form $\psi'(\varphi(\psi(t)))$.

\section{Modular curves of genus 1}
\label{S:genus1}

We now consider the open subgroups $G$ of $\GL_2(\Zhat)$ with genus $1$ and prime power level $N=\ell^e$ that satisfy $-I\in G$ and $\det(G)=\Zhat^\times$.    We are interested in describing those $G$ for which $X_G(\QQ)$ is infinite.    There is no harm in replacing $G$ by a conjugate.   So by Theorem~\ref{T:groups 0 and 1}(\ref{T:groups 0 and 1 ii}), there are $250$ cases that need to be checked.
\\

Let $J_G$ be the Jacobian of the curve $X_G$.  Using the methods of \cite{PossibleIndices}, we can compute the rank of $J_G(\QQ)$.   From \cite{MR0337993}*{\S IV}, we find that the curve $X_G$ has good reduction at all primes $p\nmid N=\ell^e$.   Therefore, $J_G$ is an elliptic curve defined over $\QQ$ whose conductor is a power of $\ell$.    The primes $\ell$ that arise are small enough to ensure that $J_G$ is isomorphic to one of the elliptic curves in Cremona's tables \cite{Cremona}; this gives a finite number of candidates for $J_G$ up to isogeny.

For each prime $p\nmid 6\ell$, we can compute $\#J_G(\FF_p)=\#X_G(\FF_p)$ from the modular interpretation of $X_G$, cf.~\cite{PossibleIndices}*{\S3.6} for details.   In particular, we can compute $\#J_G(\FF_p)$ directly from the group~$G$ without computing a model for $X_G$ (or its reduction modulo $p$).  By computing several values of $\#J_G(\FF_p)$ with $p\neq \ell$, we can quickly distinguish the isogeny class of $J_G$ among the finite set of candidates.   We then compute the rank of $J_G(\QQ)$, which we note is an isogeny invariant.

Running this procedure on each of the $250$ genus $1$ groups $G$ given by Theorem~\ref{T:groups 0 and 1}, we find that $J_G(\QQ)$ has rank $0$ for $222$ groups and $J_G(\QQ)$ has positive rank for $28$ groups; a \texttt{Magma} script that performs this computation can be found in \cite{MagmaScripts}.
We need only consider the $28$ groups $G$ for which $J_G(\QQ)$ has positive rank, since $X_G(\QQ)$ is finite if $J_G(\QQ)$ has rank $0$.\medskip

Now let $G$ be one of the $28$ groups for which $J_G(\QQ)$ has positive rank; they are precisely the $28$ genus $1$ groups in Theorem~\ref{T:main} and can be found in Table~\ref{table:g1} in Appendix~\ref{S:appendix}.   For each of these groups $G$, if $X_G(\QQ)$ is nonempty then it must be infinite, since the Abel-Jacobi map then gives a bijection from $X_G(\QQ)$ to $J_G(\QQ)$.  We initially verified that $X_G(\QQ)$ is nonempty by finding an elliptic curve $E/\QQ$ with $\rho_E(\Gal_\QQ)\subseteq G$ using an extension of the algorithm in \cite{ComputingImages}.

For each of these $28$ groups $G$, a model for $X_G$ and the morphism $\pi_G$ can already be found in the literature (and are equivalent to the ones we give in Appendix~\ref{S:appendix}).  For the $27$ groups $G$ of level $16$ these curves and morphisms were constructed by Rouse and Zureick-Brown in \cite{RZB}; the models and morphisms we give in Table~\ref{table:g1} for these groups are slightly different (we constructed them by taking fiber products of our genus 0 curves), but we have verified that they are isomorphic (note that their groups are transposed relative to ours).  The remaining group $G$ has level $11$ and its image in $\GL_2(\ZZ/11\ZZ)$ is the normalizer of a non-split Cartan subgroup.  An explicit model for $X_G=X^+_{\rm ns}(11)$ and the morphism to the $j$-line can be found in \cite{MR1677158}; these are reproduced in Appendix~\ref{S:appendix}.

\section{Proof of Theorem~\ref{T:ell-adic}} \label{S:ell-adic proof}

If $\ell \leq 13$, then the set $\calJ_\ell$ is finite by \cite{PossibleIndices}*{Proposition 4.8}.  If $\ell >13$, this follows from \cite{PossibleIndices}*{Proposition 4.9}; note that $\rho_{E,\ell^\infty}$ is surjective if and only if $\rho_{E,\ell}$ is surjective, since $\ell\ge 5$, by~\cite{MR1043865}*{IV Lemma 3}.  This proves (\ref{T:ell-adic 0}).\medskip

For a group $G$ from Theorem~\ref{T:main}, define the set 
\[
\calS_G:= {\bigcup}_{G'} \pi_{G',G}( X_{G'}(\QQ)),
\]
where $G'$ varies over the proper subgroups of $G$ that are conjugate to one of groups in Theorem~\ref{T:main} of $\ell$-power level and $\pi_{G',G} \colon X_{G'}\to X_G$ is the natural morphism induced by the inclusion $G' \subseteq G$.  Note that this is a finite union.

Suppose first that $G$ has genus $0$.  Then $X_G\simeq \PP^1_\QQ$ and $\calS_G$ is a \emph{thin} subset of $X_G(\QQ)$, in the language of \cite{MR1757192}*{\S9}.
The field $\QQ$ is Hilbertian, and in particular $\PP_1(\QQ)\simeq X_G(\QQ)$ is not thin; this implies that the complement $X_G(\QQ)-\calS_G$ cannot be thin and must be infinite.

Suppose that $G$ has genus $1$.   If $G$ does not have level $16$ and index $24$, then there are no proper subgroups $G'$ of $G$ that are conjugate to a group from Theorem~\ref{T:main}, and therefore $\calS_G$ is empty and $X_G(\QQ)-\calS_G$ is infinite.

Now suppose that $G$ has genus $1$, level $16$, and index $24$.  There are 7 such $G$, labeled
\[
\text{16C}^1\text{-16c},\ \text{16C}^1\text{-16d},\ \text{16B}^1\text{-16a},\ \text{16B}^1\text{-16c},\ \text{16D}^1\text{-16d},\ \text{8D}^1\text{-16b},\ \text{8D}^1\text{-16c}
\]
and explicitly described in Table~\ref{table:g1} of Appendix~\ref{S:appendix}.
Each of these $G$ contains either~2 or~4 index~2 subgroups $G'$ that are conjugate to one of the groups in Theorem~\ref{T:main}.
In every case we have $\calS_G=X_G(\QQ)$, so that $X_G(\QQ)-\calS_G$ is empty; see Example 6.11 and Remark 6.3 in \cite{RZB}.\medskip

Let $E/\QQ$ be an elliptic curve with $j_E \notin \calJ_\ell$.   The group $\pm \rho_{E,\ell^\infty}(\Gal_\QQ)$ is conjugate in $\GL_2(\ZZ_\ell)$ to the $\ell$-adic projection of a unique group $G$ from Theorem~\ref{T:main} with $\ell$-power level.     Using Proposition~\ref{P:XG rational}, we can also characterize $G$ as the unique group  from Theorem~\ref{T:main} with $\ell$-power level such that $j_E \in  \pi_G(X_G(\QQ) - \calS_G)$.  Parts (\ref{T:ell-adic i}) and (\ref{T:ell-adic ii}) follow by noting that $ \pi_G(X_G(\QQ) - \calS_G)$ is empty when $G$ has genus $1$, level $16$, and index $24$, and it is infinite otherwise.
 
\section{How the \texorpdfstring{${J(t)}$}{{\textit J}({\textit t})} were found}
\label{S:how}

Let $G$ be one of the genus $0$ subgroups of $\GL_2(\Zhat)$ from Theorem~\ref{T:main};  they are listed in Tables~\ref{table:g0three}--\ref{table:g0two} in Appendix~\ref{S:appendix} and were determined using the algorithm described in \S\ref{S:group theory}.  For each $G$, we also have a rational function $J(t)\in \QQ(t)$ such that the function field of $X_G$ is of the form $\QQ(f)$ and $j=J(f)$, where $j$ is the modular $j$-invariant; the verification of this property is described in \S\ref{S:genus 0 check}.

In this section, we explain how we found $J(t)$; note that the method we used to verify the correctness of $J(t)$ does not depend on how it was found!  None of our theorems depend on the techniques described in this section.  All that matters is that they eventually produced functions $J(t)$ whose correctness we could verify using the procedure described in \S\ref{SS:verify}.\medskip

We used an extension of the algorithm in \cite{ComputingImages} to search for elliptic curves $E/\QQ$ for which $\rho_{E}(\Gal_\QQ)$ is conjugate to a subgroup of $G$.  This was initially done by simply checking elliptic curves in Cremona's tables \cite{Cremona} and the LMFDB \cite{LMFDB} (but see Remark~\ref{R:search} below).
 After enough searching, we find elliptic curves $E_1$, $E_2$, $E_3$ defined over~$\QQ$ with distinct $j$-invariants $j_1$, $j_2$, $j_3$ for which we believe that $\rho_{E_i}(\Gal_\QQ)$ is conjugate in $\GL_2(\Zhat)$ to a subgroup of $G$; in particular, we expect that $j_1,j_2,j_3 \in \pi_G(X_G(\QQ))$. 
We ran the Monte Carlo algorithm in \cite{ComputingImages} using parameters that ensure the error probability is less than $2^{-100}$, under the GRH.

Now suppose that $j_1$, $j_2$, $j_3$ are indeed elements of $\pi_G(X_G(\QQ))$. The curve $X_G$ has genus $0$ and rational points, so it is isomorphic to $\PP^1_\QQ$.   We can choose an isomorphism $X_G\simeq \PP^1_\QQ$ such that there are points $P_1, P_2, P_3 \in X_G(\QQ)$ satisfying $\pi_G(P_i)=j_i$ which map to $0$, $1$, $\infty$, respectively.  There is thus a rational function $J(t)\in \QQ(t)$ such that $J(0)=j_1$, $J(1)=j_2$, $J(\infty)=j_3$ and such that $\QQ(X_G)=\QQ(f)$ for a modular function $f$ satisfying $J(f)=j$; the function $f$ is obtained by composing our isomorphism $\PP^1_\QQ\simeq X_G$ with $\pi_G$.

We can now find all such potential $J$.  As explained in \S\ref{S:genus 0 check}, we can construct a modular function $h\in \calF_N$ and a rational function $J'(t)\in K_N(t)$ such that $K_N(X_G)=K_N(h)$ and $j=J'(h)$, where~$N$ is the level of $G$.   
We thus have
 \[
 J(t)=J'(\psi(t))
 \] 
for some degree $1$ function $\psi(t)\in K_N(t)$ satisfying $\psi(0) \in R_1$, $\psi(1)\in R_2$ and $\psi(\infty) \in R_3$, where $R_i:=\{\alpha\in K_N\cup \{\infty\}: J'(\alpha)=j_i\}$.    Since the sets $R_i$ are finite and disjoint, there are only finitely many $\psi(t) \in \QQ(t)$ of degree $1$ satisfying $\psi(0) \in R_1$, $\psi(1)\in R_2$, $\psi(\infty) \in R_3$.    For each such $\psi(t)$, we check whether $J'(\psi(t))$ lies in $\QQ(t)$.

Consider any $\psi$ as above for which $J'(\psi(t))\in \QQ(t)$.  Set $J(t):=J'(\psi(t))$ and $f:=\psi^{-1}(h)\in K_N(X_G)$.    We have $J(f)=J'(h)=j$.  The field $\QQ(f)$ is thus the function field of a modular curve $X_{G'}$, where $G'$ is an open subgroup of $\GL_2(\Zhat)$ of level $N$ satisfying $\det(G')=\Zhat^\times$ and $-I\in G'$; it consists of matrices whose reduction modulo $N$ fix $f$.    We can then check whether $G$ is equal to~$G'$.  Since $[\GL_2(\Zhat):G]= \deg \pi_G = \deg J = [\GL_2(\Zhat):G']$, it suffices to determine whether $G$ is a subgroup of $G'$; equivalently, whether $G$ fixes $f$.  A method for determining whether $f$ is fixed by~$G$ is described in \S\ref{SS:checking f}.

We will eventually find a $\psi$ for which we have $G=G'$ (provided that our initial $j$-invariants~$j_i$ are valid).   This then proves that $\QQ(X_G)=\QQ(f)$ for some $f$ satisfying $J(f)=j$, where $J(t):=J'(\psi(t))\in \QQ(t)$.  

Note this rational function $J(t)$ is not unique since $J(\varphi(t))$ would also work for any $\varphi(t)\in \QQ(t)$ of degree $1$.   Using similar reasoning, it is easy to determine if two $J_1,J_2\in \QQ(t)$ satisfy $J_2(t)=J_2(\varphi(t))$ for some degree $1$ function $\varphi\in \QQ(t)$.   We have chosen our rational functions so that they are relatively compact when written down.

\begin{remark}\label{R:search}
Having run this procedure to obtain functions $J(t)$ for each of the groups $G$ where we were able to find suitable $E_1,E_2,E_3$ in Cremona's tables, we then address the remaining groups $G$ by picking a group $G'$ that contains a subgroup conjugate to $G$ for which we already know a function $J'(t)\in \QQ(t)$; such a $G'$ existed for every $G$ not addressed in our initial search of Cremona's tables.
Using the function $J'(t)$ we can quickly obtain a large list of elliptic curves $E$ for which $\rho_{E}(\Gal_\QQ)$ is a subgroup of $G'$.
By running the algorithm in \cite{ComputingImages} on several thousand (or even millions) of these curves we are eventually able to find $E_1$, $E_2$, $E_3$ with distinct $j$-invariants for which it is highly probable that $\rho_{E_i}(\Gal_\QQ)$ is actually conjugate to a subgroup of the smaller group~$G$ contained in $G'$.  We then proceed as above to compute the function $J(t)$ for $G$.\medskip
\end{remark}

\appendix
\section{Tables}\label{S:tables}  \label{S:appendix}

This section includes tables that list data for all the groups $G$, up to conjugacy in $\GL_2(\Zhat)$, from Theorem~\ref{T:main}.  The genus $0$ groups are given in Tables~\ref{table:g0three}, \ref{table:g0odd} and \ref{table:g0two}.  The genus $1$ groups are given in Table~\ref{table:g1}.\medskip

We now describe how to read Tables~\ref{table:g0three}--\ref{table:g1}.  Each row corresponds to a unique group $G$ from Theorem~\ref{T:main} up to conjugacy; it is given a unique label of the form $MZ^g$-$Nz$, where $M$, $N$ and $g$ are integers and $Z$ and $z$ are letters.  The integers $N$ and $g$ are the level and genus of $G$, respectively.  Let $\Gamma$ be the congruence subgroup consisting of matrices in $\SL_2(\ZZ)$ whose image modulo $N$ lies in the image of $G$ modulo $N$.   The integer $g$ is also the genus of the Riemann surface obtained by taking the quotient of the complex upper-half plane by the action of $\Gamma$.   The integer $M$ is the level of $\Gamma$ and $Z$ is an uppercase letter that distinguishes $\Gamma$ up to conjugacy in $\GL_2(\ZZ)$;  the prefix $MZ^g$ for $\Gamma$ matches the label used by Cummins and Pauli \cite{MR2016709}.  The letter $z$ is chosen so that the label $MZ^g$-$Nz$ distinguishes $G$ up to conjugacy in $\GL_2(\Zhat)$.  In some  of the tables, we also number the rows.

For each row of these tables, there is a positive integer $N$ and a set of matrices $S$ in $\GL_2(\ZZ/N\ZZ)$; the corresponding open subgroup $G$ of $\GL_2(\Zhat)$ consists of the matrices $A\in \GL_2(\Zhat)$ whose image in $\GL_2(\ZZ/N\ZZ)$ lies in the subgroup generated by $S$.  The integer $i$ in each row is the index $[\GL_2(\Zhat):G]$.

For each row of a table, we also have a list of minimal supergroups up to conjugacy (given by labels or numberings of other rows); more precisely, the groups $G'$ satisfying  $G\subsetneq G' \subseteq \GL_2(\Zhat)$, up to conjugacy in $\GL_2(\Zhat)$, for which there are no subgroups strictly between $G$ and $G'$.   \\

If $G$ has genus $0$, then $X_G$ is isomorphic to $\PP^1_\QQ$; its function field is of the form $\QQ(t)$.   If $G$ has genus~$1$, then the curve $X_G$ is isomorphic to the elliptic curve in Table~\ref{table:ec} given by the ``curve'' column of Table~\ref{table:g1}; its function field is of the form $\QQ(x,y)$, where $x$ and $y$ are the given Weierstrass coordinates.\medskip

Now for simplicity, suppose that $G$ has genus $0$.  The function $F$ listed in the ``map" column describes the morphism $X_G\to X_{G'}$ for one of the corresponding minimal supergroups $G'$ of $G$ (or the first minimal supergroup listed if only one map is given).    More precisely, after appropriately conjugating $G'$ satisfying $G\subsetneq G'$, the morphism $X_G\to X_{G'}$ is given by the rational function $F(t)\in \QQ(t)$, i.e., the curves $X_G$ and $X_{G'}$ have function fields $\QQ(t)$ and $\QQ(u)$, respectively, and are related by the equation $u=F(t)$.   By composing these rational maps down to $X_{\GL_2(\Zhat)}=\PP^1_\QQ$  (i.e., to the group labeled $\text{1A}^0\text{-1a})$, we
 obtain from $G$ a rational function 
\[
J(t)\in \QQ(t);
\] 
in \S\ref{S:genus 0 check}, we showed that $J(t)$ describes the morphism $\pi_G$ from $X_G$ to the $j$-line.   If $J'(t)\in \QQ(t)$ is the rational function arising from $G'$ in the same manner, then we will have $J(t)=J'(F(t))$.
The function $J(t)$ is independent of any choice of supergroups.\medskip

Similar remarks and conventions hold when $G$ has genus $1$.
\begin{table}
\setlength{\extrarowheight}{3pt}
\fontsize{10.75pt}{10.75pt}

\end{center}
\bigskip
\caption{Some elliptic curves}  \label{table:ec}
\end{table}

\noindent
\[
J(x,y):=\frac{(f_1 f_2 f_3 f_4)^3}{f_5^2 f_6^{11}},
\]
\begin{align*}
f_1&=x^2+3x-6,
&f_2&=11(x^2-5)y+(2x^4+23x^3-72x^2-28x+127),\\
f_3&=6y+11x-19,
&f_4&=22(x-2)y+(5x^3+17x^2-112x+120), \\
f_5&=11y+(2x^2+17x-34), 
&f_6&=(x-4)y-(5x-9).
\end{align*}

\newpage
\bibliographystyle{plain}
\begin{bibdiv}
\begin{biblist}

\bib{Magma}{article}{
      author={Bosma, Wieb},
      author={Cannon, John},
      author={Playoust, Catherine},
       title={The {M}agma algebra system. {I}. {T}he user language},
        date={1997},
     journal={J. Symbolic Comput.},
      volume={24},
      number={3-4},
       pages={235\ndash 265},
        note={Computational algebra and number theory (London, 1993)},
}

\bib{MR2059637}{article}{
   author={Chua, Kok Seng},
   author={Lang, Mong Lung},
   author={Yang, Yifan},
   title={On Rademacher's conjecture: congruence subgroups of genus zero of
   the modular group},
   journal={J. Algebra},
   volume={277},
   date={2004},
   number={1},
   pages={408--428},
   issn={0021-8693},
}

\bib{Cremona}{misc}{
	author={Cremona, John E.},
	title={Elliptic curve data},
	date={2014},
	note={\url{http://homepages.warwick.ac.uk/staff/J.E.Cremona/ftp/data/}},
}

\bib{MR2016709}{article}{
      author={Cummins, C.~J.},
      author={Pauli, S.},
       title={Congruence subgroups of {${\rm PSL}(2,{\mathbb Z})$} of genus less
  than or equal to $24$},
        date={2003},
        ISSN={1058-6458},
     journal={Experiment. Math.},
      volume={12},
      number={2},
       pages={243\ndash 255},
         url={http://projecteuclid.org/getRecord?id=euclid.em/1067634734},
      review={\MR{MR2016709 (2004i:11037)}},
}

\bib{CPwebsite}{misc}{
      author={Cummins, C.~J.},
      author={Pauli, S.},
       title={Database of congruence subgroups of {${\rm PSL}(2,{\mathbb Z})$} of genus less
  than or equal to $24$},
	    note={available at \url{http://www.uncg.edu/mat/faculty/pauli/congruence/}}
}

\bib{MR0337993}{incollection}{
      author={Deligne, P.},
      author={Rapoport, M.},
       title={Les sch\'emas de modules de courbes elliptiques},
        date={1973},
   booktitle={Modular functions of one variable, {II} ({P}roc. {I}nternat.
  {S}ummer {S}chool, {U}niv. {A}ntwerp, {A}ntwerp, 1972)},
   publisher={Springer},
     address={Berlin},
       pages={143\ndash 316. Lecture Notes in Math., Vol. 349},
      review={\MR{MR0337993 (49 \#2762)}},
}
\bib{MR0342466}{article}{
    author = {Dennin, Jr., Joseph B.},
     title = {The genus of subfields of {$K(p^{n})$}},
   journal = {Illinois J. Math.},
    volume = {18},
      date = {1974},
     pages = {246--264},
      issn = {0019-2082},
    review = {\MR{MR0342466 (49 \#7212)}},
}


\bib{MR718935}{article}{
   author={Faltings, G.},
   title={Endlichkeitss\"atze f\"ur abelsche Variet\"aten \"uber
   Zahlk\"orpern},
   language={German},
   journal={Invent. Math.},
   volume={73},
   date={1983},
   number={3},
   pages={349--366},
   issn={0020-9910},
   review={\MR{718935 (85g:11026a)}},
   doi={10.1007/BF01388432},
}

\bib{MR1677158}{article}{
   author={Halberstadt, Emmanuel},
   title={Sur la courbe modulaire $X_{\text{nd\'ep}}(11)$},
   language={French, with English and French summaries},
   journal={Experiment. Math.},
   volume={7},
   date={1998},
   number={2},
   pages={163--174},
   issn={1058-6458},
   review={\MR{1677158 (99m:11062)}},
}

\bib{MR648603}{book}{
   author={Kubert, Daniel S.},
   author={Lang, Serge},
   title={Modular units},
   series={Grundlehren der Mathematischen Wissenschaften [Fundamental
   Principles of Mathematical Science]},
   volume={244},
   publisher={Springer-Verlag, New York-Berlin},
   date={1981},
   pages={xiii+358},
   isbn={0-387-90517-0},
   review={\MR{648603 (84h:12009)}},
}

\bib{LMFDB}{misc}{
  author       = {{LMFDB Collaboration}},
  title        = {The L-functions and Modular Forms Database},
  note         = {\url{http://www.lmfdb.org}},
  year         = {2015},
}

\bib{RZB}{article}{
	author={Rouse, Jeremy},
	author={Zureick-Brown, David},
	title={Elliptic curves over $\QQ$ and $2$-adic images of Galois},
	journal={Research in Number Theory},
	volume={1},
	date={2015},
}

\bib{MR0387283}{article}{
      author={Serre, Jean-Pierre},
       title={Propri\'et\'es galoisiennes des points d'ordre fini des courbes
  elliptiques},
        date={1972},
        ISSN={0020-9910},
     journal={Invent. Math.},
      volume={15},
      number={4},
       pages={259\ndash 331},
      review={\MR{MR0387283 (52 \#8126)}},
}

\bib{MR644559}{article}{
      author={Serre, Jean-Pierre},
       title={Quelques applications du th\'eor\`eme de densit\'e de
  {C}hebotarev},
        date={1981},
        ISSN={0073-8301},
     journal={Inst. Hautes \'Etudes Sci. Publ. Math.},
      number={54},
       pages={323\ndash 401},
      review={\MR{MR644559 (83k:12011)}},
}

\bib{MR1043865}{book}{
      author={Serre, Jean-Pierre},
       title={Abelian {$l$}-adic representations and elliptic curves},
     edition={Second Ed.},
      series={Advanced Book Classics},
   publisher={Addison-Wesley Publishing Company Advanced Book Program},
     address={Redwood City, CA},
        date={1989},
        ISBN={0-201-09384-7},
        note={With the collaboration of Willem Kuyk and John Labute},
      review={\MR{MR1043865 (91b:11071)}},
}

\bib{MR1757192}{book}{
      author={Serre, Jean-Pierre},
       title={Lectures on the {M}ordell-{W}eil theorem},
     edition={Third Ed.},
      series={Aspects of Mathematics},
   publisher={Friedr. Vieweg \& Sohn},
     address={Braunschweig},
        date={1997},
        note={Translated from the French and edited by Martin Brown from notes
  by Michel Waldschmidt, With a foreword by Brown and Serre},
      review={\MR{MR1757192 (2000m:11049)}},
}

\bib{MR1291394}{book}{
   author={Shimura, Goro},
   title={Introduction to the arithmetic theory of automorphic functions},
   series={Publications of the Mathematical Society of Japan},
   volume={11},
   note={Reprint of the 1971 original;
   Kan\^o Memorial Lectures, 1},
   publisher={Princeton University Press, Princeton, NJ},
   date={1994},
   pages={xiv+271},
   isbn={0-691-08092-5},
   review={\MR{1291394 (95e:11048)}},
}

\bib{ComputingImages}{article}{
author={Sutherland, Andrew V.},
title={Computing images of Galois representations attached to elliptic curves},
journal={Forum of Mathematics, Sigma},
volume={4},
date={2016},
pages={79 pages},
}

\bib{MagmaScripts}{misc}{
author={Sutherland, Andrew V.},
author={Zywina, David},
title={{\rm \texttt{Magma}} scripts associated to ``Modular curves of prime-power level with infinitely many rational points"},
date={2016},
note={available at \url{http://math.mit.edu/~drew/SZ16}},
}

\bib{PossibleImages}{misc}{
author={Zywina, David},
title={On the possible images of the mod $\ell$ representations associated to elliptic curves over $\QQ$},
date={2015},
note={arXiv:1508.07660v1 [math.NT]},
}

\bib{PossibleIndices}{misc}{
author={Zywina, David},
title={Possible indices for the Galois image of elliptic curves over $\QQ$},
date={2015},
note={arXiv:1508.07663v1 [math.NT]},
}

\end{biblist}
\end{bibdiv}

\end{document}